\title{Murphy's Law for Galois Deformation Rings}
\author{Andreea Iorga}
\address{The University of Chicago, 5734 S University Ave, Chicago, IL 60637, USA}
\email{aiorga@uchicago.edu}
\begin{document}

\begin{abstract}
	In this paper, we prove, under a technical assumption, that any semi-direct product of a $p$-group $G$ with a group $\Phi$ of order prime to $p$ can appear as the Galois group of a tower of extensions $H/K/F$ with the property that $H$ is the maximal pro-$p$ extension of $K$ that is unramified everywhere, and $\Gal(H/K) = G$. A consequence of this result is that any local ring admitting a surjection to $\Z_5$ or $\Z_7$ with finite kernel can occur as a universal everywhere unramified deformation ring. 
\end{abstract}

\maketitle

\section{Introduction}

Let $p$ be a prime. Let $\Phi$ be a finite group of order prime to $p$, and let $G$ be a finite $p$-group with an action of $\Phi$. Throughout this paper, $\Phi$ will be fixed, and $G$ will represent any $p$-group with an action of $\Phi$. Let $\Gamma = G \rtimes \Phi$ be the semi-direct product of $G$ and $\Phi$. 
For any number field $F$, let $L_p(F)$ denote the maximal unramified $p$-extension of $F$. If $\Phi$ is the trivial group, Ozaki's Theorem (Theorem 1 in \cite{MR2772089}) states that any $p$-group $\Gamma = G$ can be written as the Galois group of $L_p(F)/F$, for some totally complex number field $F$. A recent paper by Hajir, Maire and Ramakrishna (\cite{hajir2022ozakis}) provides two extensions to Ozaki's result: the base field can have arbitrary signature, as long as its class number is prime to $p$, and the degree of the new field over $\Q$ can be controlled. In this paper, we prove a different generalization in the case of regular primes: 

\begin{imptheorem} \label{thm1}
	Let $p$ be a prime. Let $\Phi$ be a group of order prime to $p$. Assume there exists an extension of number fields $L/E$ such that 
	\begin{itemize}
		\item $L/E$ is Galois with Galois group $\Phi$,
		\item $E$ contains $\mu_p$, and is totally imaginary if $p = 2$,
		\item $L$ has class number prime to $p$,
		\item $L/E$ satisfies property \textbf{P} below. 
	\end{itemize}
	For any $p$-group $G$ with an action of $\Phi$, there exist extensions of number fields $H/K/F$ such that
	\begin{enumerate}
		\item $H/K$ is the maximal pro-$p$ extension of $K$ that is unramified everywhere,
		\item $\Gal(H/K) = G$,
		\item $\Gal(H/F) = \Gamma$, where $\Gamma = G \rtimes \Phi$, 
		\item $H/F$ satisfies property \textbf{P} below. 
	\end{enumerate}
\end{imptheorem}

\begin{definition}
	We say that an extension of number fields $L/K$ has property \textbf{P} if for all primes $\mathfrak{p}$ of K, and $\mathfrak{P} \mid \mathfrak{p}$, either $L_{\mathfrak{P}}/K_{\mathfrak{p}}$ is unramified or $L_{\mathfrak{P}}/K_{\mathfrak{p}}$ is tamely ramified with ramification index $e$ and $e \mid (q-1)$, where $q$ is the cardinality of the residue field of $K_{\mathfrak{p}}$. 
\end{definition}

When $\Phi$ is trivial, we can recover Ozaki's result in the case when $p$ is a prime such that $\Q(\zeta_p)$ has a finite extension with class number prime to $p$ (note that this includes regular primes); a similar hypothesis is present in the first version (arXiv:0705.2293) of Ozaki's paper \cite{MR2772089}. The proof of this theorem is presented in Section \ref{strategythm1}, and it is inspired by Ozaki's theorem and techniques. Throughout this paper, we will present the similarities and differences between our methods and Ozaki's methods.

A motivating example and a consequence of Theorem \ref{thm1} is Theorem \ref{thm2} below. Consider a continuous absolutely irreducible residual Galois representation $\overline{\rho} \colon G_F \to \GL_2(\F_p)$. One can associate to $\overline{\rho}$ a number of deformation rings. These pro-represent functors of deformations from the category $\mathcal{C}$ of local Artinian rings $(A, \mathfrak{m})$ with $A/\mathfrak{m} = \F_p$. 
\begin{definition}
	Consider a deformation $\rho \colon G_F \to \GL_2(A)$ of $\overline{\rho}$ for a finite $(A, \mathfrak{m})$. It factors through some finite group, and the fixed field of the kernel is a finite extension; call it $F(\rho)$. We say that $\rho$ is unramified if the extension $F(\rho)/F(\overline{\rho})$ is unramified everywhere. 
\end{definition}{}
The functor on $\mathcal{C}$ which sends $A$ to the unramified deformations $D(A)$ is pro-representable by a universal deformation ring. We are interested in the following question: 

\begin{question}
	What possible rings $R$ can occur as universal everywhere unramified deformation rings of such $\overline{\rho}$?
\end{question}

Assume that the image of $\overline{\rho}$ has order prime to $p$, so its projective image is $\Phi = A_4, \text{ } S_4, \text{ } A_5$ or a dihedral group (Proposition 16 in \cite{MR0387283}).
The Unramified Fontaine-Mazur Conjecture (Conjecture 5a in \cite{MR1363495}) 
predicts that all $\overline{\Q}_p$-points will have finite image. Moreover, the tangent space to any $\overline{\Q}_p$-point with finite image will
be trivial by class field theory (proof of Proposition 10 in \cite{MR3294389}), and thus conjecturally such a ring
has a unique map to $\overline{\Q}_p$. The expectation is then that $R$ is a ring admitting a map $R \to \Z_p$ with finite (as a set) kernel $I$. In this paper, we prove the following: 

\begin{imptheorem} \label{thm2}
	Let $R$ be any local ring admitting a surjection to $\Z_5$ or to $\Z_7$ with finite kernel. Then $R$ is a universal everywhere unramified deformation ring. 
\end{imptheorem}

This result can be seen as an example of Murphy's Law for moduli spaces, an idea introduced by Ravi Vakil in \cite{MR2227692}: all possible singularities occur inside deformation spaces. When considering unramified deformation rings, the analogue of this is to say that all finite artinian local rings appear as unramified Galois deformation rings. 

We now outline the structure of the proof, and the differences and similarities to Ozaki's methods. The proof of Theorem \ref{thm1} is done by induction, as follows. Since $\Phi$ acts on the $p$-group $G$, it must preserve the centre $Z(G)$ and the $p$-torsion of the centre of this group. It follows that each such $p$-group fits into an exact sequence of $p$-groups
\[ 1 \to V \to G^\prime \to G \to 1,\]
where $V$ is a central subgroup of exponent $p$ on which $\Phi$ acts by an irreducible representation. Therefore, there exists a sequence of $p$-groups
\[
G = G_n \to G_{n-1} \to \dots \to G_0 = 1,
\]
where each map is surjective and $\ker (G_i \to G_{i-1}) = V$ at each step. It follows that there exists a sequence of surjections 
\[
\Gamma = \Gamma_n \to \Gamma_{n-1} \to \dots \to \Gamma_0 = \Phi,
\]
such that the kernel at each step is isomorphic to $V$, where $\Gamma_i = G_i \rtimes \Phi$. The base case of the inductive process is the assumption of Theorem \ref{thm1}. The inductive step follows from Proposition \ref{prop2}, whose proof is presented in Section \ref{proofofprop2}. Just as in Ozaki's case, the extensions are constructed using Kummer Theory. The main difference between our proof and Ozaki's is that we are not working with $p$-groups, but with $p$-groups with a $\Phi$-action. Thus, we need to construct a big number of primes satisfying a series of congruence conditions. Constructing enough primes relies on the fact that the base field has a large enough degree over $\Q$. To ensure this, we perform a series of base changes using Proposition \ref{prop1}, whose proof is presented in Section \ref{proofofProp1}. The proof relies on the theory of modular representations of $\F_p[\Gamma]$. Finally, the proof of Theorem \ref{thm2} can be found in the last section. In fact, Section \ref{proofofthm2} presents a proof that works in general for any prime $p \geq 5$ with the assumption that there exists a $\Phi$-extension of $\Q(\zeta_p)$ with class number prime to $p$ that satisfies property \textbf{P}. 

\section{Strategy for proving Theorem \ref{thm1}} \label{strategythm1}

Theorem \ref{thm1} can be derived from the following two propositions, which will be proved in Sections \ref{proofofProp1} and \ref{proofofprop2}. 

\begin{proposition} \label{prop1}
	With the above notation, let $K/F$ be a Galois extension of number fields with Galois group $\Phi$ satisfying:
	\begin{itemize}
		\item The extension $L_p(K)/F$ is Galois and has Galois group isomorphic to $\Gamma = G \rtimes \Phi$,
		\item The field $F$ contains the group $\mu_p$, and is totally imaginary if $p=2$, 
		\item Every prime of $F$ lying over $p$ splits completely in $L_p(K)$,
		\item The extension $F/\Q$ satisfies $[F\colon \Q] \geq 2\left(2d(G) + r(G) + d(\Phi)\right)$, where $d(\tilde{G})$ and $r(\tilde{G})$, respectively, are the minimal numer of generators and relations of a group $\tilde{G}$. 
	\end{itemize}
	Then there exists a cyclic extension $F^\prime/F$ of degree $p$ such that if $K^\prime = F^\prime.K$, then: $F^\prime \cap L_p(K) = F$, $L_p(K^\prime) = F^\prime.L_p(K)$ and $\Gal(L_p(K^\prime)/F^\prime) \cong \Gamma$. Moreover, if the initial extension $K/F$ satisfies property \textbf{P}, then the new extension $K^\prime/F^\prime$ also satisfies property \textbf{P}. 
\end{proposition}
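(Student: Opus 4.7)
The plan is to construct $F'$ by Kummer theory: since $F \supseteq \mu_p$, cyclic degree-$p$ extensions of $F$ correspond to one-dimensional $\F_p$-subspaces of $F^\times/(F^\times)^p$, and I would seek an element $\alpha \in F^\times$ so that $F' := F(\alpha^{1/p})$ and $K' := F'K$ realize all four conclusions. The guiding idea is to arrange that the only ramification in $F'/F$ occurs at a finite set $S$ of carefully chosen primes, and that every congruence condition needed to control the new unramified pro-$p$ extensions of $K'$ is encoded in the behavior of $\alpha$ at the primes of $S$.

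First, I would apply Chebotarev to the compositum $L_p(K)\cdot F(\mu_{p^2})/\Q$ to produce a large supply of candidate primes $\mathfrak{q}$ of $F$ that split completely in $L_p(K)$, avoid $p$, and have prescribed Frobenius in $\Gal(F(\mu_{p^2})/F)$. Because $F \supseteq \mu_p$, each such prime has residue field of size $q \equiv 1 \pmod p$, which handles the tame ramification part of property \textbf{P} in advance: any $F'/F$ tamely ramified at such a $\mathfrak{q}$ with index $p$ satisfies $e \mid q-1$ automatically. From among these candidates I would pick $S$ so that the resulting $\alpha$ has (i) $v_\mathfrak{q}(\alpha) \not\equiv 0 \pmod p$ for some $\mathfrak{q} \in S$, forcing $F' \cap L_p(K) = F$ since $\mathfrak{q}$ splits completely in $L_p(K)$; and (ii) prescribed local behavior at the remaining primes so that, under the $\Phi$-action, every potential new unramified pro-$p$ extension of $K'$ is killed.

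The second step is to verify the identity $L_p(K') = F'L_p(K)$. The inclusion $F'L_p(K) \subseteq L_p(K')$ is essentially formal: $L_p(K)/K$ is everywhere unramified, and the only new ramification in $K'/K$ occurs above $S$; since each prime of $K$ above $\mathfrak{q} \in S$ splits completely in $L_p(K)$ by construction, its inertia in $F'L_p(K)/K$ lies entirely inside $\Gal(F'/F)$, so $F'L_p(K)/K'$ is everywhere unramified. For the reverse inclusion I would use class field theory: genus theory identifies $\Gal(L_p(K')/K')$ with a quotient of $\Gal(L_p(K)/K) = G$ plus a contribution from inertia at the $S$-primes, and the congruence conditions chosen in step one are designed precisely so that the latter contribution is fully absorbed by the $F'$-part. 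Combined with the linear disjointness from step one, this yields $\Gal(L_p(K')/K') \cong G$ and hence $\Gal(L_p(K')/F') \cong G \rtimes \Phi = \Gamma$.

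The main obstacle will be step one: proving that all the linear conditions imposed on $\alpha$, viewed inside the $\F_p[\Gamma]$-module $F^\times/(F^\times)^p$, can be simultaneously satisfied. This is a modular-representation-theoretic codimension count, where each desired property (splitting, disjointness, killing of the cohomology classes coming from the $r(G)$ relations of $G$, controlling the $d(G)$ generators of $G$ and the $d(\Phi)$ generators of $\Phi$) contributes an upper bound on the codimension of an $\F_p[\Gamma]$-submodule that $\alpha$ must avoid. The degree hypothesis $[F:\Q] \geq 2(2d(G) + r(G) + d(\Phi))$ is precisely what is needed so that the ambient $\F_p$-dimension of $F^\times/(F^\times)^p$ — which grows like the Dirichlet unit rank $[F:\Q]-1$ — exceeds the total codimension, guaranteeing that a valid $\alpha$ exists. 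Once $\alpha$ is in hand, the ramification and class-field-theoretic arguments above deliver the four conclusions, and property \textbf{P} persists for $K'/F'$ since the new tame primes have $e = p$ and $q \equiv 1 \pmod p$.
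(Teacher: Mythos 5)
Your proposal captures the right general framework (Kummer theory, Chebotarev, and the observation that $\mu_p \subset F$ forces $q\equiv 1\pmod p$ at the new ramified primes, taking care of property \textbf{P}), but the technical heart of your argument is both vague and incorrect in a way that would prevent it from being completed as stated.

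First, the module you propose to do the codimension count in is wrong. You say the degree hypothesis is there so that ``the ambient $\F_p$-dimension of $F^\times/(F^\times)^p$ --- which grows like the Dirichlet unit rank $[F:\Q]-1$ --- exceeds the total codimension.'' But $F^\times/(F^\times)^p$ is an infinite-dimensional $\F_p$-vector space (there are infinitely many primes of $F$), so this comparison is meaningless as stated; you are implicitly conflating $F^\times/(F^\times)^p$ with $\mathcal O_F^\times/(\mathcal O_F^\times)^p$ or with some finite Selmer-type subquotient. More seriously, you propose doing this count over $F$, where the $\Gamma$-action is trivial. The paper instead works on $L_p(K)$, takes $\Lambda \in L_p(K)$ and sets $\eta = N_{L_p(K)/F}(\Lambda)$, precisely because the necessary constraints --- that $L_p(K)(\sqrt[p]{\eta})$ be the \emph{maximal} elementary abelian $p$-extension of $L_p(K)$ unramified outside the primes over $\eta$ --- live in the $\F_p[\Gamma]$-module $R(L_p(K)) = U(L_p(K))/U(L_p(K))^p$, not in anything defined over $F$. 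The $\F_p[\Gamma]$-module structure is essential, and it vanishes if you try to work downstairs.

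Second, your ``codimension count'' skips the actual technical content, which is the paper's Lemma~\ref{lemma2}: a decomposition $R(L_p(K)) \cong M \oplus N \oplus Q$ into free $\F_p[\Gamma]$-modules with the global units landing in $M$, $N$ the kernel of $R \to R'$, and $Q$ a ``spare'' free module of rank at least $\frac{1}{2}[F:\Q] - d(G) - r(G)$. Establishing this requires genuine modular representation theory of $\F_p[\Gamma]$: the Frobenius-algebra structure (so projective $=$ injective), the classification of projective indecomposables via simple $\F_p[\Phi]$-modules, the existence and use of injective hulls, and a socle argument to separate the unit part from $N$. The degree hypothesis $[F:\Q] \geq 2(2d(G)+r(G)+d(\Phi))$ is exactly tuned so that $\operatorname{rank}_{\F_p[\Gamma]} Q \geq d(\Gamma)$, which lets one choose $u = \sum_i (\sigma_i-1)q_i$ in $Q$ to rig the Chebotarev target, after which the direct-sum decomposition forces the Kummer-theoretic maximality condition. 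Your proposal has no substitute for this step --- ``every desired property contributes an upper bound on the codimension'' is a plausible-sounding heuristic, but making it precise is exactly the hard part, and the module in which it can be made precise is not the one you named. Finally, your verification that $L_p(K') = F'L_p(K)$ (the reverse inclusion) is also under-justified: the paper needs Corollary~\ref{cor1} (Ozaki's $H_2$ criterion specialized to a cyclic extension) to conclude that $F'L_p(K)$ has no further unramified $p$-extensions, and this hinges on the maximality property you have not established.
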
 

\begin{proposition}\label{prop2}
	Let $K/F$ be a Galois extension of number fields satisfying the four conditions of Proposition \ref{prop1} and property \textbf{P}. Assume $\Phi$ acts irreducibly on a $p$-group $V$. Then for any exact sequence of groups
	\[
	1 \to V \to \Gamma^\prime \to \Gamma \to 1,
	\]
	there exists a finite extension of fields $K^\prime/F^\prime$ such that
	\begin{enumerate}
		\item $F \subset F^\prime$ and $K \subset K^\prime$,
		\item The extension $K^\prime/F^\prime$ is Galois and has Galois group isomorphic to $\Phi$,
		\item The extension $L_p(K^\prime)/F^\prime$ is Galois and has Galois group isomorphic to $\Gamma^\prime$,
		\item Every prime of $F^\prime$ lying over $p$ splits completely in $L_p(K^\prime)$,
		\item The extension $L_p(K^\prime)/F^\prime$ satisfies property \textbf{P}.
	\end{enumerate}
\end{proposition}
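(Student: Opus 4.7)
My plan is to adapt Ozaki's Kummer-theoretic construction to the $\Phi$-equivariant setting. The short exact sequence $1 \to V \to \Gamma' \to \Gamma \to 1$ is classified by a class $\xi \in H^2(\Gamma, V)$ with $\Gamma$ acting on $V$ through its quotient $\Phi$. The goal is to produce a base change $F'/F$ and a Kummer extension $N$ of $L_p(K)$ so that $K' = F' \cdot K$ satisfies $L_p(K') = N \cdot F'$, with $\Gal(L_p(K')/F') \cong \Gamma'$.

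The first step is to assemble a finite set $\mathfrak{T}$ of primes of $F$ via Chebotarev applied in $L_p(K)(\mu_{p^2})$, so that each $\mathfrak{q} \in \mathfrak{T}$ splits completely in $L_p(K)/F$ and satisfies $N(\mathfrak{q}) \equiv 1 \pmod{p^2}$; the large-degree hypothesis from Proposition \ref{prop1} guarantees that enough such primes exist to meet the forthcoming linear-algebraic demands. Next, I would organize $\mathfrak{T}$ into $\Phi$-orbits whose associated permutation $\F_p[\Phi]$-modules surject onto $V$, using the irreducibility of $V$. Via Kummer theory over $L_p(K)$, choose $\alpha_1, \ldots, \alpha_r \in L_p(K)^\times$ supported on the primes above $\mathfrak{T}$ (and trivial at primes above $p$, to preserve splitting at $p$), and form $N = L_p(K)(\sqrt[p]{\alpha_1}, \ldots, \sqrt[p]{\alpha_r})$. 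The module $\Gal(N/L_p(K))$ is built to be isomorphic to $V$ with the prescribed $\Phi$-action, and by varying the $\alpha_i$ within their allowed supports one adjusts the cohomology class of $\Gal(N/F) \twoheadrightarrow \Gamma$ until it agrees with $\xi$.

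I would then use global class field theory to define $F'/F$ as the abelian extension inside the ray class field of conductor $\prod_{\mathfrak{q} \in \mathfrak{T}} \mathfrak{q}$ that is tamely ramified of order $p$ at each $\mathfrak{q} \in \mathfrak{T}$; the congruence $N(\mathfrak{q}) \equiv 1 \pmod p$ ensures $F'/F$ satisfies property $\textbf{P}$. Setting $K' = F' \cdot K$, the ramification of $N$ over $L_p(K)$ at primes above $\mathfrak{T}$ is cancelled by the new ramification in $K'/K$, so $N \cdot F'$ is unramified over $K'$ and hence contained in $L_p(K')$. The main obstacle is the reverse inclusion: one must show $L_p(K')$ is no larger than $N \cdot F'$. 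This is the heart of the argument and uses Poitou--Tate duality applied to the $\F_p[\Gal(L_p(K)/F)]$-module of $(\mathfrak{T} \cup \{\mathfrak{p} \mid p\})$-units of $L_p(K)$, together with the complete splitting of primes above $p$ and property $\textbf{P}$; the latter controls the local contributions so that the Selmer-style dimension count produces exactly $V$ and no extraneous unramified extensions. Once this equality is established, the remaining conclusions follow formally: $\Gal(K'/F') \cong \Phi$ from $F' \cap K = F$, complete splitting at $p$ in $L_p(K')$ from the trivial support of the $\alpha_i$ at primes above $p$, and property $\textbf{P}$ for $L_p(K')/F'$ by combining the tame congruence conditions on $\mathfrak{T}$ with the property for $L_p(K)/F$.
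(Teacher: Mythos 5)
Your high-level strategy — Kummer extensions over $L_p(K)$ supported on auxiliary primes split in $L_p(K)/F$, a base change $F'/F$ that absorbs the ramification, and use of property \textbf{P} to control local conditions — does point in the same general direction as the paper. But there are substantive gaps.

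The critical closure step — showing that $L_p(K')$ is no larger than the constructed Kummer extension — is where your proposal stops being a proof. You invoke ``Poitou--Tate duality'' and a ``Selmer-style dimension count'' but do not specify the local conditions, so it is not clear the count comes out right. The paper instead verifies the hypothesis of Ozaki's closure criterion (Lemma \ref{lemma1} here): that $\bigoplus_v H_2(D_v,\Z)\to H_2(\Gal(\tilde L/L_p(K)),\Z)$ is surjective. Since $L_p(K')/L_p(K)$ has Galois group $(\Z/p)^{2n}$ (not $V\cong(\Z/p)^n$; you need to account for the fact that $[L_p(K'):L_p(K)]=|V|^2$), $H_2\cong\bigwedge^2(\Z/p)^{2n}$, and one must arrange that the decomposition groups of the auxiliary primes exhaust \emph{all} $(\Z/p)^2$-subgroups of the Galois group. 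This forces the paper's choice of $T=\tfrac{(p^{2n}-1)(p^{2n}-p)}{(p^2-1)(p^2-p)}$ split primes $\lambda_\ell$, the products $\nu_1,\nu_2$ with engineered exponents running over every coset of a $(\Z/p)^2$-subgroup, and the two extra primes $\alpha,\beta$ with the elaborate congruence conditions via the ray class field $\tilde M/K$. None of this combinatorial machinery appears in your sketch, and it is precisely this that makes the closure step go through; ``enough such primes exist'' is not a substitute for exhibiting them with the required decomposition behaviour.

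The nonsplit case is also undertreated. You remark that one can ``adjust the cohomology class'' until it matches $\xi\in H^2(\Gamma,V)$, but you give no mechanism for realising a nonsplit class over $G_F$. The paper first solves the embedding problem $1\to V\to\Gamma'\to\Gamma\to 1$ using Neukirch's local-global theorems (Theorems \ref{satz2.2}, \ref{satz4.7}, \ref{satz5.1}); this is exactly where property \textbf{P} is essential, since it ensures solvability of the local embedding problems at tamely ramified primes. The resulting global solution is a priori ramified; Lemma \ref{lemma4} then replaces its Kummer generators by global units after a controlled base change via Proposition \ref{prop1}, at which point the split-case construction (twisted by those units) takes over. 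Your sketch bypasses the embedding problem entirely and so does not explain where nonsplit $\xi$ comes from. Finally, two smaller points: the paper only needs $N(\lambda_i)\equiv 1\pmod p$, not $\pmod{p^2}$; and the auxiliary primes must be chosen (Lemma \ref{lemma3}) with prescribed images in $R(K)$, a strictly stronger requirement than splitting completely in $L_p(K)/F$, because that is what guarantees the Kummer extensions are unramified at $p$ while still hitting the correct direct summand $Q^G$ of $R(K)$.
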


The proof of Theorem \ref{thm1} is inspired by Ozaki's results, and follows from Propositions \ref{prop1} and \ref{prop2} by induction. Recall that for a $p$-group $G$ with a $\Phi$-action, we have an exact sequence of $p$-groups
\[
G = G_n \to G_{n-1} \to \dots \to G_0 = 1,
\]
where each map is surjective and the kernel at each step is isomorphic to $V$. If $\Gamma_i = G_i \rtimes \Phi$, then we have a sequence of surjections
\[\Gamma = \Gamma_n \to \Gamma_{n-1} \to \dots \to \Gamma_0 = \Phi,\]
with $\ker(\Gamma_i \to \Gamma_{i-1}) \cong V$, for $1 \leq i \leq n$. The assumption of Theorem \ref{thm1} is the base case of our inductive proof. At step $i$, we can assume that we have a Galois extension $K_i/F_i$ satisfying the conditions of Proposition \ref{prop1} for $G_i$ and $\Gamma_i$. Using Proposition \ref{prop1} repeatedly, we can construct a finite extension $F_i^\prime$ of $F_i$ such that if $K_i^\prime = F_i^\prime.K_i$, then $F_i^\prime \cap L_p(K_i) = F_i$, $L_p(K_i^\prime) = F_i^\prime.L_p(K_i)$, and $\Gal(L_p(K^\prime_i)/F_i^\prime) \cong \Gamma_i$. Moreover, repeatedly constructing extensions using Proposition \ref{prop1}, we increase the degree $[F_i^\prime \colon \Q]$, while keeping $2(2d(G_{i+1}) + r(G_{i+1}) + d(\Phi))$ unchanged. Thus, we can also assume that $[F_i^\prime \colon \Q] \geq 2(2d(G_{i+1}) + r(G_{i+1}) + d(\Phi))$. Since this extension $K_i^\prime/F_i^\prime$ satisfies the conditions of Proposition \ref{prop2}, there exists a finite extension $K_{i+1}/F_{i+1}$ such that $F_i^\prime \subset F_{i+1}$, $K_i^\prime \subset K_{i+1}$, $\Gal(K_{i+1}/F_{i+1}) \cong \Phi$, $\Gal(L_p(K_{i+1})/F_{i+1}) \cong \Gamma_{i+1}$, every prime of $F_{i+1}$ lying over $p$ splits completely in $L_p(K_{i+1})$ and $L_p(K_{i+1})/F_{i+1}$ satisfies property \textbf{P}. Therefore, we have obtained fields $F = F_n$, $K = K_n$ and $H = L_p(K_n)$ with the desired properties. 

\section{Tools for the proof}

In this section, we present some facts that will be useful later in the paper. Most of these results can either be found in \cite{MR2772089} or are generalizations of results in \cite{MR2772089}. We will follow Ozaki's notation. 

Suppose $F$ is a number field. Let $U_{\mathfrak{p}}(F)$ be the pro-$p$-part of the local unit group of the complete field $F_{\mathfrak{p}}$ of $F$ at $\mathfrak{p}$ and $U(F) = \oplus_{\mathfrak{p} \mid p} U_{\mathfrak{p}}(F)$. We embed the unit group of the localisation $\mathcal{O}_{F, \mathfrak{p}}$ of the maximal order $\mathcal{O}_F$ of $F$ at $p$ diagonally into $U(F)$ as usual, and let $U_{\mathfrak{p}}^\prime(F)$ be the submodule of $U_{\mathfrak{p}}(F)$ consisting of all the elements $u$ such that $F_{\mathfrak{p}}(\sqrt[p]{u})/F_{\mathfrak{p}}$ is unramified; let $U^\prime(F) = \oplus_{\mathfrak{p} \mid p} U_{\mathfrak{p}}^\prime(F)$. Since we have that $U(F)^p \subset U^\prime(F) \subset U(F)$, we can define $R(F) = U(F)/U(F)^p$ and $R^\prime(F) = U(F)/U^\prime(F)$. 

\begin{lemma}\label{lemmafree}
	If $K/F$ is a Galois extension with Galois group $\Phi$ as above, then 
	\begin{enumerate}
		\item $R(K) \cong \F_p[\Phi]^{[F\colon \Q] + s}$ and $R^\prime(K) \cong \F_p[\Phi]^{[F \colon \Q]}$,
		\item $R(L_p(K)) \cong \F_p[\Gamma]^{[F\colon\Q] + s}$ and $R^{\prime}(L_p(K)) \cong \F_p[\Gamma]^{[F\colon \Q]}$,
	\end{enumerate}
	where $s$ is the number of primes of $F$ lying over $p$. 
\end{lemma}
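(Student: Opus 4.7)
The plan is to exploit the standing hypothesis (inherited from Proposition~\ref{prop1}) that every prime of $F$ above $p$ splits completely in $L_p(K)$, and hence a fortiori in $K$. Consequently, for each prime $\mathfrak{p}$ of $F$ above $p$, the decomposition group at every prime $\mathfrak{P}\mid\mathfrak{p}$ of $K$ is trivial, and so $\Phi$ acts freely and transitively on the set $\{\mathfrak{P}\mid\mathfrak{p}\}$ while every completion $K_\mathfrak{P}$ is canonically isomorphic to $F_\mathfrak{p}$. Grouping the defining direct sum of $U(K)$ according to the prime of $F$ below, the sub-summand $\bigoplus_{\mathfrak{P}\mid\mathfrak{p}}U_\mathfrak{P}(K)$, with its $\Phi$-action by permutation, is isomorphic as a $\Z_p[\Phi]$-module to the induced module $\mathrm{Ind}_{\{1\}}^{\Phi}U_\mathfrak{p}(F)\cong \Z_p[\Phi]\otimes_{\Z_p}U_\mathfrak{p}(F)$. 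This is the structural observation that forces freeness of the quotients over $\F_p[\Phi]$.

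Next I would reduce modulo $p$. Since $F$ contains $\mu_p$, every completion $F_\mathfrak{p}$ does too, and the standard description of the pro-$p$ part of units in a mixed-characteristic local field yields $U_\mathfrak{p}(F)/U_\mathfrak{p}(F)^p \cong \F_p^{[F_\mathfrak{p}:\Q_p]+1}$ (rank $[F_\mathfrak{p}:\Q_p]$ from the free $\Z_p$-part plus one from $\mu_p$). Applying the induced-module description termwise and summing over $\mathfrak{p}\mid p$ gives
\[
R(K) \cong \bigoplus_{\mathfrak{p}\mid p} \F_p[\Phi]^{[F_\mathfrak{p}:\Q_p]+1} \cong \F_p[\Phi]^{[F:\Q]+s},
\]
using $\sum_{\mathfrak{p}\mid p}[F_\mathfrak{p}:\Q_p]=[F:\Q]$. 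For $R^\prime(K)$ the same induction reduces the claim to the local count $\dim_{\F_p}U_\mathfrak{p}(F)/U^\prime_\mathfrak{p}(F)=[F_\mathfrak{p}:\Q_p]$, which I would obtain from Kummer theory together with local class field theory: $F_\mathfrak{p}^\times/(F_\mathfrak{p}^\times)^p$ has dimension $[F_\mathfrak{p}:\Q_p]+2$, the image of $\mathcal{O}_\mathfrak{p}^\times$ is the codimension-one hyperplane of classes of trivial valuation, and the unique unramified $\Z/p$-extension of $F_\mathfrak{p}$ is obtained by adjoining a $p$-th root of a suitable prime-to-$p$ unit, cutting off one more dimension. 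Summing leaves exactly $[F_\mathfrak{p}:\Q_p]$ ramified classes per prime, which after induction and summation yields $R^\prime(K)\cong \F_p[\Phi]^{[F:\Q]}$.

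Part~(2) is proved by exactly the same argument with $\Phi$ replaced by $\Gamma$ and $K$ replaced by $L_p(K)$. The hypothesis that every prime of $F$ above $p$ splits completely in $L_p(K)$ again forces $\Gamma$ to act freely and transitively on the primes of $L_p(K)$ above each $\mathfrak{p}\mid p$, so $\bigoplus_{\mathfrak{Q}\mid\mathfrak{p}}U_\mathfrak{Q}(L_p(K)) \cong \Z_p[\Gamma]\otimes_{\Z_p}U_\mathfrak{p}(F)$, and the previous local computations carry through verbatim to give $R(L_p(K))\cong \F_p[\Gamma]^{[F:\Q]+s}$ and $R^\prime(L_p(K))\cong \F_p[\Gamma]^{[F:\Q]}$.

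The main obstacle is the local dimension count for $U_\mathfrak{p}(F)/U^\prime_\mathfrak{p}(F)$: although intuitively clear, it requires pinning down, via Kummer theory, that exactly one of the $[F_\mathfrak{p}:\Q_p]+1$ independent Kummer classes of units produces the unique unramified $\Z/p$-extension of $F_\mathfrak{p}$. Everything else is bookkeeping made possible by the complete-splitting hypothesis, which transports the global lemma to a local computation and thereby imposes the free $\F_p[\Phi]$- (resp.\ $\F_p[\Gamma]$-) module structure automatically. These computations match the corresponding statements in \cite{MR2772089} in the case $\Phi=1$ and generalize them to non-trivial $\Phi$.
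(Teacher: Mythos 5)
Your proposal is correct and follows essentially the same route as the paper: reduce to the local computation $U_{\mathfrak{p}}(F)/U_{\mathfrak{p}}(F)^p \cong \F_p^{[F_\mathfrak{p}\colon\Q_p]+1}$ (using $\mu_p \subset F$) and $U_{\mathfrak{p}}(F)/U^\prime_{\mathfrak{p}}(F) \cong \F_p^{[F_\mathfrak{p}\colon\Q_p]}$, then use the complete-splitting hypothesis at $p$ to identify $U(K)$ (resp.\ $U(L_p(K))$) with $\Z_p[\Phi]\otimes_{\Z_p}U(F)$ (resp.\ $\Z_p[\Gamma]\otimes_{\Z_p}U(F)$) and conclude freeness. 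The only difference is that you spell out the Kummer-theoretic count behind $R^\prime$ in more detail than the paper does, which is a harmless elaboration.
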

\begin{proof}
	By definition, $U(F) = \oplus_{\mathfrak{p} \mid p} U_\mathfrak{p} (F) = \oplus_{\mathfrak{p} \mid p} \mathcal{O}_{F_\mathfrak{p}}^\times \otimes_{\Z_p} \Z_p$. Tensoring with $\F_p$, we obtain that
	$\left(\mathcal{O}_{F_\mathfrak{p}}^\times \otimes_{\Z_p} \Z_p\right)\big/\left(\mathcal{O}_{F_\mathfrak{p}}^\times \otimes_{\Z_p} \Z_p\right)^p \cong \F_p^{d_{\mathfrak{p}}+1}$, where $d_{\mathfrak{p}} = [F_\mathfrak{p}\colon \Q_p]$. Note that $\sum_{\mathfrak{p}\mid p} d_{\mathfrak{p}} = [F\colon \Q]$. It follows that $R(F) \cong \F_p^{[F\colon\Q]+s}$. Similarly, $R^\prime(F) \cong \F_p^{[F\colon\Q]}$.

	Because every prime of $F$ lying over $p$ splits completely in $K/F$, we have a natural isomorphism of $\Phi$-modules $U(K) \cong \Z_p[\Phi] \otimes_{\Z_p} U(F)$ and $U^\prime(K) \cong \Z_p[\Phi] \otimes_{\Z_p} U^\prime(F)$, which shows that $R(K) \cong \F_p[\Phi]^{[F\colon\Q] + s}$ and $R^\prime(K) \cong \F_p[\Phi]^{[F\colon \Q]}$. 

	Similarly, since every prime of $F$ lying over $p$ splits completely in $L_p(K)/F$, we obtain that $R(L_p(K)) \cong \F_p[\Gamma]^{[F\colon\Q] + s}$ and $R^\prime(L_p(K)) \cong \F_p[\Gamma]^{[F\colon \Q]}$. 
\end{proof}

\begin{lemma} \label{lemma1}
	Let $p$ be any prime number, $F$ a number field with $L_p(F) = F$ and $S$ a finite set of primes of $F$. We denote by $F_S/F$ the maximal elementary abelian $p$-extension of $F$ unramified outside $S$. For any prime $v$ of $F$, denote by $D_v$ the decomposition subgroup of $\Gal(F_S/F)$ at the prime $v$. We assume that the map 
	\begin{align*}
		\bigoplus_{v} H_2(D_v, \Z) \to H_2 (\Gal(F_S/F), \Z)
	\end{align*}
	induced by the natural inclusion $D_v \subset \Gal(F_S/F)$ is surjective. Then $L_p(F_S) = F_S$. 
\end{lemma}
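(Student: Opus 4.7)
My plan is to argue by contradiction. Suppose $L_p(F_S)\supsetneq F_S$; then there is a nontrivial everywhere unramified $p$-extension of $F_S$. By taking Galois closures over $F$ and then, using that $\bar G := \Gal(F_S/F)$ is a $p$-group (so every nonzero $\F_p[\bar G]$-module has nonzero invariants), extracting a $\bar G$-fixed line in the Galois group, I will reduce to a central extension
\[
1 \to \F_p \to G \to \bar G \to 1
\]
with $G = \Gal(M/F)$ and $M/F_S$ everywhere unramified. The goal is then to show the classifying class $[\alpha] \in H^2(\bar G, \F_p)$ is zero. Once this is established, $G \cong \bar G \times \F_p$ is elementary abelian $p$, so $M$ is an elementary abelian $p$-extension of $F$ unramified outside $S$ strictly containing $F_S$, contradicting the maximality of $F_S$.

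The universal coefficient theorem, applied with $\bar G$ elementary abelian $p$, presents $H^2(\bar G,\F_p)$ as an extension with quotient $\operatorname{Hom}(H_2(\bar G,\Z),\F_p) = \wedge^2\bar G^*$ (the Schur part, recording the commutator pairing) and subgroup $\operatorname{Ext}^1(\bar G,\F_p) = \bar G^*$ (the Bockstein part, a character $\psi \colon \bar G \to \F_p$). I will show each piece of $[\alpha]$ vanishes.

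For the Schur part, the preimage $\tilde D_v \subset G$ of $D_v$ equals the local Galois group $\Gal(M_w/F_v)$, and this local extension is the compositum of $F_{S,\tilde v}/F_v$ and an unramified extension of $F_v$, both of which lie inside the maximal abelian extension of $F_v$. Hence $\tilde D_v$ is abelian, and the commutator pairing $\omega_G(g,h) = [\tilde g,\tilde h]$ restricts to $0$ on every $D_v$. Dualizing the hypothesis via $\operatorname{Hom}(-,\F_p)$ yields an injection $\wedge^2\bar G^* \hookrightarrow \bigoplus_v \wedge^2 D_v^*$, so $\omega_G = 0$ globally and $G$ is abelian.

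For the Bockstein part, I write $G$ as the pullback $\bar G \times_{\F_p} \Z/p^2$ along $\psi$. Since $M/F_S$ is everywhere unramified, the inertia $I_w(M/F)$ at a prime $w$ of $M$ above $v$ projects isomorphically onto $I_v \subset D_v$, so $I_v$ must admit a section into $\tilde D_v$ inside this pullback; a direct computation in $\bar G \times_{\F_p} \Z/p^2$ shows such a section exists if and only if $\psi|_{I_v} = 0$. Applying this at every $v$ shows $\psi$ vanishes on all inertia, so the cyclic degree-$p$ subextension $F_\psi \subset F_S$ cut out by $\psi$ is an everywhere unramified $p$-extension of $F$; the assumption $L_p(F) = F$ then forces $F_\psi = F$, whence $\psi = 0$. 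The main obstacle will be these two local verifications --- the abelianness of $\tilde D_v$ and the pullback inertia computation --- which together rely on local class field theory and a direct group-theoretic calculation inside $\Z/p^2$.
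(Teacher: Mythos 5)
Your argument is correct, and it is essentially the proof of Ozaki's Lemma~7, which is all the paper offers for this statement (it is cited, not reproved): reduce to a central everywhere-unramified degree-$p$ extension $M/F_S$, kill the commutator part of the class using the $H_2$-surjectivity hypothesis together with abelianness of the local preimages (unramified extensions of $F_{S,\tilde v}$ descend from unramified extensions of $F_v$), and kill the remaining abelian part using that inertia lifts isomorphically since $M/F_S$ is unramified, together with $L_p(F)=F$; Ozaki phrases the first step as vanishing of the transgression in the five-term sequence and the second as ``$\Gal(M/F)$ is abelian and generated by its inertia subgroups, each of exponent $p$,'' which are the same computations as your UCT/Bockstein splitting. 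Two harmless imprecisions to repair in writing it up: the reduction needs a $\bar G$-trivial \emph{quotient} of order $p$ (nonzero coinvariants, equally automatic for a $p$-group) rather than a $\bar G$-fixed line, and the preimage $\tilde D_v$ equals $\Gal(M_w/F_v)$ only when $\tilde v$ does not split in $M/F_S$ --- in general it is the product of $\Gal(M_w/F_v)$ with the central $\F_p$, which is still abelian, so the conclusion stands.
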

\begin{proof}
	See Lemma 7 in \cite{MR2772089}. 
\end{proof}
\begin{corollary}\label{cor1}
	Let $S$ and $F_S$ be as in Lemma \ref{lemma1}. If $F_S/F$ is a cyclic extension, then $L_p(F_S) = F_S$. 
\end{corollary}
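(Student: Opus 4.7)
The plan is to deduce the corollary as an immediate special case of Lemma \ref{lemma1}. Since $F_S/F$ is by construction an elementary abelian $p$-extension, the assumption that it is cyclic forces $\Gal(F_S/F)$ to be either trivial or isomorphic to $\Z/p\Z$. In either case, this is a (finite) cyclic group, and a standard computation of group homology shows that the Schur multiplier $H_2(C, \Z)$ of any cyclic group $C$ vanishes; indeed, one can read this directly off the periodic resolution
\[
\dots \to \Z[C] \xrightarrow{N} \Z[C] \xrightarrow{\sigma - 1} \Z[C] \to \Z \to 0,
\]
where $\sigma$ is a generator and $N$ is the norm element.

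Consequently, $H_2(\Gal(F_S/F), \Z) = 0$, so the map
\[
\bigoplus_v H_2(D_v, \Z) \to H_2(\Gal(F_S/F), \Z)
\]
from Lemma \ref{lemma1} is trivially surjective (both sides might as well be $0$, but in any case the target is zero). The hypothesis of Lemma \ref{lemma1} is therefore satisfied, and applying that lemma yields $L_p(F_S) = F_S$, as claimed.

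There is no real obstacle here: the corollary is essentially a formal consequence of Lemma \ref{lemma1} combined with the vanishing of the Schur multiplier of a cyclic group. The only thing to check is that the vanishing of $H_2(\Gal(F_S/F), \Z)$ is enough, rather than needing to exhibit specific decomposition subgroups $D_v$ whose $H_2$'s surject onto it — but since the target is zero, surjectivity is automatic.
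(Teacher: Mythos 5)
Your proof is correct and is essentially the argument the paper leaves implicit: the Schur multiplier $H_2(C,\Z)$ of a finite cyclic group $C$ vanishes, so the surjectivity condition of Lemma \ref{lemma1} is automatic and the corollary follows.
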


The following is a variant of Lemma 9 of \cite{MR2772089}, which only applies to $M = L_p(K)$. Our modification works for both $M = L_p(K)$ and $M = K$.
\begin{lemma}\label{lemma3}
	Let $L_p(K)/F$ be an extension as above. Let $M = K$ or $L_p(K)$, and let $\tilde{M}/M$ be any finite abelian extension linearly disjoint from the maximal abelian extension of $M$ unramified outside $p$. Then for any $u \in R(M)$ and any $\tau \in \Gal(\tilde{M}/M)$, there exist infinitely many prime ideals $\Lambda\mathcal{O}_M$ of $\mathcal{O}_M$ such that $\Lambda\mathcal{O}_M$ is prime to $p$, $(\Lambda \mod{U(M)^p)} = u$ in $R(M)$, and $(\Lambda \mathcal{O}_M, \tilde{M}/M) = \tau$.  
\end{lemma}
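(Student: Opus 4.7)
The plan is to adapt the proof of Lemma 9 in \cite{MR2772089}. The main tool is Chebotarev's density theorem, applied to a carefully chosen auxiliary abelian extension of $M$ that simultaneously encodes both conditions of the lemma (the prescribed image in $R(M)$ and the prescribed Frobenius in $\tilde{M}$).

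I would first construct the auxiliary extension. Since $\mu_p \subset F \subset M$, Kummer theory is available. For each element of a basis of $R(M)$ over $\F_p$, I choose via weak approximation a lift $\tilde{u}_i \in M^\times$ mapping to that basis element in $U(M)/U(M)^p$, arranging (by modifying the lifts appropriately) that the Kummer extension $M^\ast := M(\sqrt[p]{\tilde{u}_1}, \ldots, \sqrt[p]{\tilde{u}_m})$ lies inside the maximal abelian extension of $M$ unramified outside $p$. Set $N := M^\ast \cdot \tilde M$. The linear disjointness hypothesis on $\tilde M$ then yields a natural isomorphism $\Gal(N/M) \cong \Gal(M^\ast/M) \times \Gal(\tilde M/M)$.

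Next, I would translate both conditions into Frobenius data in $N/M$. For the condition on $\tilde{M}$ this is immediate. For the condition on $R(M)$: for any principal prime ideal $\mathfrak{l} = \Lambda \mathcal{O}_M$ coprime to $p$ and to the chosen lifts, the global reciprocity law for the Hilbert symbol yields $\prod_v (\Lambda, \tilde{u}_i)_v = 1$ for each $i$. The tame symbol at $\mathfrak{l}$ recovers the Frobenius of $\mathfrak{l}$ in the Kummer piece $M(\sqrt[p]{\tilde{u}_i})/M$, while the Hilbert symbols at primes $\mathfrak{p} \mid p$ compute the pairing of $\tilde{u}_i$ against $\Lambda \bmod U(M)^p$. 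Since the $\tilde{u}_i$ range over a basis of $R(M)$ and the local Kummer pairing at primes above $p$ is non-degenerate, this reciprocity identity converts the Chebotarev data at $\mathfrak{l}$ into precise information about $\Lambda \bmod U(M)^p$.

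Finally, I would apply Chebotarev's density theorem to $N/M$, enlarged if needed by the Hilbert class field $H(M)$ so as to enforce principality of $\mathfrak{l}$ (ensuring a generator $\Lambda$ exists). The main obstacle is class-field-theoretic bookkeeping: the Artin map naturally only pins down $\Lambda \bmod U(M)^p$ modulo the image of the global units $\mathcal{O}_M^\times$ in $R(M)$, while the lemma demands the exact image $u$. This gap is closed by simultaneously using weak approximation at primes above $p$ to prescribe $\Lambda$ modulo a high power of $\mathfrak{p}$, together with the freedom to multiply any generator of $\mathfrak{l}$ by a global unit. The case $M = K$ (new compared to Ozaki's $M = L_p(K)$) requires extra care because the class group of $K$ may have nontrivial $p$-part, but the local nature of $R(M)$ at primes above $p$ means the overall strategy carries through with essentially the same argument.
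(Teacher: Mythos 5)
Your overall skeleton --- Chebotarev in a compositum containing the Hilbert class field and $\tilde{M}$, with the linear-disjointness hypothesis ensuring compatibility, followed by adjusting a generator by a global unit --- is the right shape, but the central construction has a genuine gap. You cannot choose lifts $\tilde{u}_i \in M^\times$ of an $\F_p$-basis of $R(M)$ so that $M(\sqrt[p]{\tilde{u}_1}, \dots, \sqrt[p]{\tilde{u}_m})/M$ is unramified outside $p$: an element $x \in M^\times$ has this property only if $v(x) \equiv 0 \pmod p$ at every $v \nmid p$, and by the $S$-unit theorem (together with the class group) the image in $R(M)$ of this Selmer-type group has $\F_p$-dimension at most $\tfrac{1}{2}[M\colon \Q] + s_M + \delta$, where $s_M$ is the number of primes of $M$ above $p$ and $\delta$ is the $p$-rank of the $S$-class group, whereas $\dim_{\F_p} R(M) = [M\colon\Q] + s_M$; since $M$ is totally imaginary and $[M\colon\Q]$ is large in this paper, that image is a proper subspace. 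So the field $M^\ast$ you need (inside the maximal abelian extension unramified outside $p$, which is exactly where the disjointness hypothesis is usable) does not exist; with arbitrary lifts the splitting $\Gal(N/M) \cong \Gal(M^\ast/M) \times \Gal(\tilde{M}/M)$ is unjustified and the reciprocity identity picks up uncontrolled symbols at the auxiliary primes dividing the $\tilde{u}_i$. The same count exposes a more basic obstruction: Frobenius conditions in \emph{any} extension unramified outside $p$ can only determine the class of a generator $\Lambda$ in $R(M)$ modulo the image $E$ of $\mathcal{O}_M^\times$, never exactly; and your proposed repair, ``weak approximation at primes above $p$ to prescribe $\Lambda$ modulo a high power of $\mathfrak{p}$,'' is not an available move, since $\Lambda$ is a generator of a prime produced by Chebotarev and congruences cannot be imposed on it after the fact --- they must themselves be encoded as Frobenius conditions.

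The paper closes exactly this gap with one piece of class field theory: the Artin map induces $\rho \colon R(M) \to \Gal(L/M)$, where $L$ is the maximal elementary abelian $p$-extension of $M$ unramified outside $p$, and the sequence $\mathcal{O}_M^\times \otimes \F_p \to R(M) \xrightarrow{\rho} \Gal(L/M) \to \Gal(H/M) \to 1$ is exact, i.e.\ the ambiguity is precisely the unit image $E$. One applies Chebotarev in the compositum of $L$, the maximal unramified abelian extension $N$, and $\tilde{M}$, asking for Frobenius $\rho(u)^{-1}$ on $L$, trivial on $N$ (forcing the prime to be principal, say $\alpha = \Lambda_0 \mathcal{O}_M$), and $\tau$ on $\tilde{M}$; compatibility of the first two conditions holds because $\rho(u)$ dies in $\Gal(H/M)$, and compatibility with the third is the disjointness hypothesis. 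The product formula then gives $\rho(\Lambda_0 \bmod U(M)^p) = \rho(u)$, so $\Lambda_0$ and $u$ differ by an element of $E$, and replacing $\Lambda_0$ by $\Lambda_0 \varepsilon^{-1}$ for a suitable $\varepsilon \in \mathcal{O}_M^\times$ finishes. If you want to salvage your congruence idea, the legitimate version is to add to the Chebotarev data a ray class field of conductor a high power of $p$ (still unramified outside $p$, so the hypothesis on $\tilde{M}$ still applies) and prescribe the ray class of the prime to be that of an auxiliary $\beta$ with $\beta \bmod U(M)^p = u$; either way, the step your write-up is missing is the identification of the ambiguity with $E$ via the exact sequence above, not additional local pairing conditions.
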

\begin{proof}
	Let $L$ be the maximal elementary abelian $p$-extension of $M$ which is unramified outside $p$, and let $H$ be the maximal elementary abelian $p$-extension of $M$ unramified everywhere. Then we have the following exact sequence 
	\[
	\mathcal{O}_M^\times \otimes \F_p \to R(M) \xrightarrow[{}]{\rho} \Gal(L/M) \xrightarrow[{}]{f} \Gal(H/M) \to 1,
	\]
	where the map $\rho \colon R(M) \to \Gal(L/M)$ is the map induced by class field theory, and the third map is the natural surjection $f \colon \Gal(L/M) \to \Gal(H/M)$. Let $\sigma = \rho(u) \in \Gal(L/M)$. Let $N$ be the maximal unramified abelian extension of $M$. Note that $L$ and $N$ are linearly disjoint over $H$, and let $\tilde{L}$ be their compositum. Observe that $\tilde{L}$ and $\tilde{M}$ are linearly disjoint over $M$. Let $\tilde{\sigma} \in \Gal(\tilde{L}/M)$ be an element with the properties that $\res(\tilde{\sigma})\mid_L = \sigma^{-1}$ and $\res(\tilde{\sigma})\mid_N = 1$. Such an element exists because $L$ and $N$ are linearly disjoint over $H$, and the restrictions $\sigma^{-1} \in \Gal(L/M)$ and $1 \in \Gal(H/M)$ agree on $H/M$, since $\res(\sigma^{-1})\mid_H = \res(1)\mid_H$ if and only if $\sigma^{-1} \in \ker(f) = \im(\rho)$, which is true by construction. 

	By the Chebotarev density theorem, there are infinitely many degree one primes $\alpha$ of $\mathcal{O}_{M}$ not lying over $p$ such that $(\alpha, \tilde{L}/M) = \tilde{\sigma}$ and $(\alpha, \tilde{M}/M) = \tau$. The first condition implies that $(\alpha, L/M) = \sigma^{-1}$ and $(\alpha, N/M) = 1$. The second property implies that $\alpha$ is a principal ideal in $\mathcal{O}_M$, so there exists $\Lambda_0 \in M$ such that $\alpha = \Lambda_0 \mathcal{O}_M$. Combining this with the first condition, we obtain that $\Lambda_0 = \Lambda \varepsilon$, for some $\varepsilon \in \mathcal{O}_M^\times$ and some $\Lambda \in M$. The element $\Lambda$ has the properties $(\Lambda \mod{U(M)^p}) = u$ in $R(M)$ and $(\Lambda\mathcal{O}_M, \tilde{M}/M) = \tau$, which is what we wanted. 
\end{proof}

\section{Proof of Proposition \ref{prop1}} \label{proofofProp1}

In this section, we provide a proof for Proposition \ref{prop1}, which is our  version of Proposition 1 in the first version of \cite{MR2772089}. Our proof follows the idea of Ozaki's proof, modified to work in our situation. More explicitly, in his proof, Ozaki uses the theory of $\F_p$-representations of $p$-groups $G$, while we have to use the theory of $\F_p$-representations of groups of the form $G \rtimes \Phi$, where $G$ is a $p$-group and $\Phi$ is a group of order prime-to-$p$. Throughout this section, assume that the conditions of Proposition \ref{prop1} hold.  

We would like to find an element $\Lambda$ of $L_p(K)$ such that
\begin{enumerate}
	\item The ideal $\Lambda \mathcal{O}_{L_p(K)}$ is a prime ideal of degree $1$, not lying over $p$. \label{cond1}
	\item If $S$ denotes the set of primes of $L_p(K)$ dividing $\eta = N_{L_p(K)/F}(\Lambda)$, then $L_p(K)(\sqrt[p]{\eta})$ is the maximal elementary abelian $p$-extension of $L_p(K)$ unramified outside $S$. \label{cond2}
\end{enumerate}

\begin{lemma} \label{lemmma9}
	Assume such an element $\Lambda$ exists. Let $F^\prime = F(\sqrt[p]{\eta})$, with $\eta$ as above. Then $F^\prime/F$ is an extension that satisfies Proposition \ref{prop1}. 
\end{lemma}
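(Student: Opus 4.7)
The plan is to use both properties of $\Lambda$: property (1) to pin down the divisor of $\eta$, and hence the degree and Kummer-disjointness of $F'$, and property (2) combined with Corollary \ref{cor1} to kill any extra unramified $p$-extensions above $F' \cdot L_p(K)$.

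First I would compute $\eta\mathcal{O}_F$. Set $\mathfrak{P} = \Lambda\mathcal{O}_{L_p(K)}$ and $\mathfrak{p} = \mathfrak{P} \cap F$. Since $\mathfrak{P}$ has residue field $\F_p$, so does $\mathfrak{p}$, forcing $f(\mathfrak{P}/\mathfrak{p}) = 1$. A direct norm calculation gives $\eta\mathcal{O}_{L_p(K)} = \prod_{\sigma \in \Gamma} \sigma(\mathfrak{P}) = (\mathfrak{p}\mathcal{O}_{L_p(K)})^{f(\mathfrak{P}/\mathfrak{p})} = \mathfrak{p}\mathcal{O}_{L_p(K)}$, whence $\eta\mathcal{O}_F = \mathfrak{p}$. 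Thus $\eta$ is not a $p$-th power in $F$, and since $\mu_p \subset F$, the extension $F'/F$ is cyclic of degree $p$. Moreover, the factorization $\eta\mathcal{O}_{L_p(K)} = \prod_{\mathfrak{P}' \mid \mathfrak{p}} \mathfrak{P}'^{e(\mathfrak{P}'/\mathfrak{p})}$ is not a $p$-th power ideal, since each $e(\mathfrak{P}'/\mathfrak{p})$ (equal to $e(\mathfrak{P}'\cap K/\mathfrak{p})$ as $L_p(K)/K$ is unramified) divides $|\Phi|$ and hence is coprime to $p$; consequently $\eta$ is not a $p$-th power in $L_p(K)$, so $F' \cap L_p(K) = F$.

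Next, set $M = L_p(K)$ and $M' = F' \cdot M = M(\sqrt[p]{\eta})$. By property (2) of $\Lambda$, $M'$ is the maximal elementary abelian $p$-extension of $M$ unramified outside $S$. The extension $M'/M$ is cyclic of degree $p$, and since $L_p(M) = M$ by the defining property of $M = L_p(K)$, Corollary \ref{cor1} gives $L_p(M') = M'$. On the other hand, $M'/K'$ is everywhere unramified, being the base change of $M/K$ by $F'$, and has Galois group isomorphic to $\Gal(M/K) = G$, which is a $p$-group; hence $M' \subseteq L_p(K')$. The maximality $L_p(M') = M'$ now forces $L_p(K') = M' = F' \cdot L_p(K)$. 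Combining this with $F' \cap L_p(K) = F$ yields $\Gal(L_p(K')/F') \cong \Gal(L_p(K)/F) = \Gamma$.

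Finally, I would check that property \textbf{P} descends. The extension $F'/F$ is unramified outside $\mathfrak{p}$ and the primes above $p$. Primes of $F$ above $p$ split completely in $L_p(K) \supset K$, so $K/F$ is unramified there and base-changing by $F'$ keeps $K'/F'$ unramified above $p$. At a prime $\mathfrak{q} \neq \mathfrak{p}$ of $F$ not above $p$, $F'/F$ is unramified at $\mathfrak{q}$, so $K'_{\mathfrak{Q}'}/F'_{\mathfrak{q}'}$ is the base change of $K_\mathfrak{Q}/F_\mathfrak{q}$, and the divisibility $e \mid (q-1) \mid (q^f - 1)$ preserves property \textbf{P}. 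At the prime $\mathfrak{p}$, $F'/F$ is tamely ramified of index $p$ (since $\mathfrak{p} \nmid p$ and $\sqrt[p]{\eta}$ is locally a uniformizer); because $\gcd(e(K/F,\mathfrak{p}), p) = 1$, a short uniformizer computation shows $e(K'/F',\mathfrak{q}') = e(K/F,\mathfrak{p})$, and $f(\mathfrak{q}'/\mathfrak{p}) = 1$, so tameness and the divisibility $e \mid (q-1)$ again survive. The main obstacle is the identity $L_p(K') = F' \cdot L_p(K)$: the maximality in property (2) is specifically engineered so that $M'/M$ is cyclic and captures every elementary abelian $p$-extension of $M$ unramified outside $S$, thereby allowing Corollary \ref{cor1} to rule out any further unramified $p$-extension above $F' \cdot L_p(K)$.
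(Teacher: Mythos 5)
Your proposal is correct and follows essentially the same route as the paper: degree $p$ from $\eta\mathcal{O}_F$ being prime, $F'\cap L_p(K)=F$, the identification $L_p(K')=F'\cdot L_p(K)$ via property (2) and Corollary \ref{cor1}, transfer of the Galois group, and the ramification-index/residue-degree bookkeeping for property \textbf{P} (your Kummer-ideal argument for $F'\not\subset L_p(K)$ is a harmless variant of the paper's observation that $K'/K$ is ramified). Only a cosmetic slip: a degree-one prime $\Lambda\mathcal{O}_{L_p(K)}$ has residue field $\F_\ell$ for the rational prime $\ell\neq p$ below it, not $\F_p$, though the conclusion $f(\mathfrak{P}/\mathfrak{p})=1$ is unaffected.
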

\begin{proof}
	Since $\eta \mathcal{O}_F$ is a prime ideal of $\mathcal{O}_F$, it follows that $\sqrt[p]{\eta} \not\in F$, so $F^\prime$ is a degree $p$ extension of $F$. Let $K^\prime = K.F^\prime$. The fields $K$ and $F^\prime$ are linearly disjoint over $F$, so $K^\prime/K$ is a degree $p$ extension. We want to prove that $F^\prime \cap L_p(K) = F$, $L_p(K^\prime) = F^\prime . L_p(K)$, and that the extension $L_p(K^\prime)/F^\prime$ is Galois with Galois group isomorphic to $\Gamma$. 

	Consider $F^\prime \cap L_p(K)$. This is equal to $F^\prime$ if $F^\prime \subset L_p(K)$; otherwise, it is equal to $F$. Assume $F^\prime \subset L_p(K)$. By construction, this implies that $K^\prime \subset L_p(K)$. But $L_p(K)$ is the maximal unramified $p$-extension of $K$, and $K^\prime$ is a ramified $p$-extension of $K$, so they must be linearly disjoint over $K$, and $K^\prime \not\subset L_p(K)$. It follows that our assumption was false, and so $F^\prime \cap L_p(K) = F$, which proves the first part. 

	Using the previous part, we observe that $F^\prime.L_p(K) = L_p(K)(\sqrt[p]{\eta})$ and $K^\prime.L_p(K) = L_p(K)(\sqrt[p]{\eta})$. By construction, $L_p(K)(\sqrt[p]{\eta})$ is the maximal elementary abelian $p$-extension of $L_p(K)$ unramified outside $S$, so Corollary \ref{cor1} tells us that $L_p(L_p(K)(\sqrt[p]{\eta})) = L_p(K)(\sqrt[p]{\eta})$. On one hand, since $K^\prime \subset K^\prime.L_p(K)$, we must have $L_p(K^\prime) \subset L_p(K^\prime.L_p(K)) = K^\prime.L_p(K)$. On the other hand, $K^\prime.L_p(K)$ is an unramified $p$-extension of $K^\prime$, so $K^\prime.L_p(K) \subset L_p(K^\prime)$. Combining these remarks, we obtain that $L_p(K^\prime) = K^\prime.L_p(K) = F^\prime.L_p(K)$, proving the second part. 

	Finally, consider the following diagram 
	\[
    \begin{tikzcd}[arrows=dash]
	& L_p(K^\prime) = F^\prime.L_p(K) \dlar \drar\\
L_p(K) \drar& & F^\prime \dlar\\
& F
\end{tikzcd}
\]

Since the extension $L_p(K)/F$ is Galois and has Galois group isomorphic to $\Gamma$, it follows that $L_p(K^\prime)/F^\prime$ is Galois and $\Gal(L_p(K^\prime)/F^\prime) = \Gal(F^\prime.L_p(K)/F^\prime) \cong \Gal(L_p(K)/F) \cong \Gamma$, which is what we wanted. 

To conclude the proof, assume that the initial extension $K/F$ has property \textbf{P}. Let $\mathfrak{p}^\prime$ be any prime of $F^\prime$ and $\mathfrak{p}$ be a prime of $F$ below $\mathfrak{p}^\prime$. Let $e$ and $e^\prime$ be the ramification indices of $\mathfrak{p}$ in $K/F$ and of $\mathfrak{p}^\prime$ in $K^\prime/F^\prime$, respectively. Let $q$ and $q^\prime$ be the number of elements of the residue fields of $F_{\mathfrak{p}}$ and $F^\prime_{\mathfrak{p}^\prime}$, respectively. Since the order of $\Phi$ is prime to $p$, we must have that $e=e^\prime$, and $q^\prime = q$ or $q^\prime = q^p$. Since $K/F$ has property \textbf{P}, then either $e = 1$ or $K_{\mathfrak{p}}/F_{\mathfrak{p}}$ is tamely ramified with $e \mid (q-1)$. It follows that either $e^\prime = e = 1$ or $K^\prime_{\mathfrak{p}^\prime}/F^\prime_{\mathfrak{p}^\prime}$ is tamely ramified with $e^\prime \mid (q-1) \mid (q^\prime - 1)$, so the new extension $K^\prime/F^\prime$ has property \textbf{P}. 
\end{proof}

Now, assume that $\Lambda$ has property (\ref{cond1}). Let $S$  be the set of primes of $L_p(K)$ dividing $\eta$. If $L_p(K)(\sqrt[p]{\alpha})/L_p(K)$ is unramified outside $S$, for some $\alpha \in L_p(K)$, then 
\[
\alpha \mod{L_p(K)^{\times p}} \equiv (\varepsilon \mod{L_p(K)^{\times p}}) + \sum_{\sigma\in \Gamma}a_\sigma(\sigma\Lambda  \mod{L_p(K)^{\times p}}),
\]
with $a_\sigma \in \F_p$, $\varepsilon \in \mathcal{O}^\times_{L_p(K)}$. Since $L_p(K)(\sqrt[p]{\alpha})/L_p(K)$ is unramified at the primes above $p$, it must be true that 
\[
(\varepsilon \mod{U^\prime(L_p(K))}) +  \sum_{\sigma \in \Gamma} a_\sigma(\sigma \Lambda \mod{U^\prime(L_p(K))})  \equiv 0.
\]
If this equation only holds for $\varepsilon \in (\mathcal{O}^\times_{L_p(K)})^p$ and $a_\sigma = a$, $\forall \sigma \in \Gamma$, for some $a \in \F_p$, then 
\begin{align*}
	\alpha \mod{U(L_p(K))^p} \equiv a\sum_{\sigma\in \Gamma} \sigma (\Lambda \mod{U(L_p(K))^p})  = a(\eta \mod{U(L_p(K))^p}).
\end{align*}
Thus, $\sqrt[p]{\alpha} \in L_p(K)(\sqrt[p]{\eta})$, so condition (\ref{cond2}) also holds. 

Let $E = E(L_p(K))$ be the image of the map $\mathcal{O}^\times_{L_p(K)} \otimes \F_p \to R^\prime(L_p(K))$. The extension $L_p(K)(\sqrt[p]{\varepsilon})/L_p(K)$ must be ramified at some prime lying over $p$, for any $\varepsilon \in \mathcal{O}^\times_{L_p(K)} \setminus (\mathcal{O}^\times_{L_p(K)})^p$, so this map is injective: $E \cong \mathcal{O}^\times_{L_p(K)} \otimes \F_p$. It follows that the map $\mathcal{O}^\times_{L_p(K)}\otimes \F_p \to R(L_p(K))$ is also injective, and by abuse of notation we denote its image by $E$. 

The following Lemma represents a key step in the proof of Proposition \ref{prop1}. It is a variation of Lemma 8 in \cite{MR2772089} and it is inspired by Lemma 2 in the first version of the same paper. The main difference between Ozaki's proof and our proof comes from the fact that $\F_p[G]$ is a projective indecomposable $\F_p[G]$-module, and this doesn't remain true if we replace $G$ by $\Gamma = G \rtimes \Phi$ (for $G$ a $p$-group and $\Phi$ a prime-to-$p$ group). To deal with this, we turn to the theory of modular representations for groups of the form $G \rtimes \Phi$ (\cite{MR3617363}) and we make use of some of the ideas that appear in Section 6 of \cite{hajir2017analytic}. 

\begin{lemma}\label{lemma2}
	Let $N$ be the kernel of the projection $R(L_p(K)) \to R^\prime(L_p(K))$. Then there exist free $\F_p[\Gamma]$-modules $M$, $N$, $Q$ of $R(L_p(K))$ such that 
	\begin{itemize}
		\item $R(L_p(K)) = M \oplus N \oplus Q$ and $R^\prime(L_p(K)) \cong M \oplus Q$.
		\item $E \subset M$.
		\item $\rank_{\F_p[\Gamma]} Q \geq \frac{1}{2}[F\colon \Q] - d(G) - r(G)$.
	\end{itemize}
\end{lemma}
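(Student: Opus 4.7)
The plan is to construct the decomposition $R(L_p(K)) = M \oplus N \oplus Q$ in three stages.

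Stage 1: Identify $N$ concretely. From the proof of Lemma \ref{lemmafree}, because every prime of $F$ over $p$ splits completely in $L_p(K)$, one has $U(L_p(K)) \cong \Z_p[\Gamma] \otimes_{\Z_p} U(F)$ and $U^\prime(L_p(K)) \cong \Z_p[\Gamma] \otimes_{\Z_p} U^\prime(F)$. Thus $N = \F_p[\Gamma] \otimes_{\F_p} \ker(R(F) \to R^\prime(F))$, and since $U(F)/U^\prime(F)$ has $\F_p$-dimension $s$ (one factor per prime of $F$ above $p$), we obtain $N \cong \F_p[\Gamma]^s$, which is already free.

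Stage 2: Split off $N$ while keeping $E$ inside the complement. Since $\F_p[\Gamma]$ is a Frobenius algebra, free $\F_p[\Gamma]$-modules are both projective and injective. The surjection $R(L_p(K)) \twoheadrightarrow R^\prime(L_p(K))$ admits some section by projectivity of $R^\prime(L_p(K))$. To arrange that $E$ lies in the image of the section, note that the inclusion $E \hookrightarrow R(L_p(K))$ projects to an embedding $E \hookrightarrow R^\prime(L_p(K))$ (injective by the discussion just before the lemma), producing a prescribed lift $\tilde E \to R(L_p(K))$ of $\tilde E := $ image of $E$ in $R^\prime(L_p(K))$. The difference between this prescribed lift and the restriction of any fixed section is a $\Gamma$-homomorphism $\tilde E \to N$, which by the injectivity of $N$ extends to $R^\prime(L_p(K)) \to N$; adding this correction yields a section whose image contains $E$. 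We obtain $R(L_p(K)) = N \oplus C$ with $C \cong R^\prime(L_p(K)) \cong \F_p[\Gamma]^{[F:\Q]}$ and $E \subset C$.

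Stage 3: Decompose $C = M \oplus Q$ into free $\F_p[\Gamma]$-submodules with $E \subset M$. Writing $J$ for the Jacobson radical of $\F_p[\Gamma]$ (which equals $I_G \cdot \F_p[\Gamma]$, the kernel of the projection $\F_p[\Gamma] \to \F_p[\Phi]$), the quotient $C/JC \cong \F_p[\Phi]^{[F:\Q]}$ is semisimple, so the image $\bar E$ of $E$ admits a complement inside it. Choose a decomposition $C/JC = \bar M \oplus \bar Q$ with $\bar E \subset \bar M$ and with both factors isomorphic to free $\F_p[\Phi]$-modules, then lift via idempotents in $\mathrm{End}_{\F_p[\Gamma]}(C) = M_{[F:\Q]}(\F_p[\Gamma])$ (idempotents lift because $J$ is nilpotent) to $C = M \oplus Q$ with $M, Q$ free $\F_p[\Gamma]$-modules. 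A Nakayama-type adjustment by a $\Gamma$-map $M \to Q$ then moves $E$ itself (not just $\bar E$) into $M$.

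The main obstacle is the rank bound $\rank Q \geq \tfrac{1}{2}[F:\Q] - d(G) - r(G)$, equivalently $\rank M \leq \tfrac{1}{2}[F:\Q] + d(G) + r(G)$. By the construction, $\rank M$ equals the minimum rank of a free $\F_p[\Phi]$-module into which $\bar E = E/JE$ embeds, namely $\max_S \lceil (\text{mult.\ of }S\text{ in }\bar E)/(\text{mult.\ of }S\text{ in }\F_p[\Phi])\rceil$ over simple $\F_p[\Phi]$-modules $S$. The leading contribution $\tfrac{1}{2}[F:\Q]$ comes from Dirichlet/Herbrand: using that $F$ is totally imaginary, $\mu_p \subset F$, and all archimedean places split completely in $L_p(K)$, one has $\dim_{\F_p} E = \tfrac{1}{2}[F:\Q]\cdot|\Gamma|$, so the ``free rank'' of $E$ is $\tfrac{1}{2}[F:\Q]$. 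The correction $d(G)+r(G)$ quantifies the failure of $E$ to be free over $\F_p[\Gamma]$; it is extracted from a presentation $1 \to R \to F(d(G)) \to G \to 1$ of $G$ by $r(G)$ relations, combined with the modular representation theory of $\Gamma = G \rtimes \Phi$ from \cite{MR3617363} and the strategy of Section~6 of \cite{hajir2017analytic}, which together translate the cohomological defect of $E$ as an $\F_p[G]$-module into a bound on the $\F_p[\Phi]$-multiplicities of simple constituents of $\bar E$.
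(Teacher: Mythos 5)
Stages 1 and 2 of your plan are sound and match the spirit of the paper's opening moves: $N$ is free of rank $s$, free $\F_p[\Gamma]$-modules are injective, and the injectivity of $N$ lets you correct a section of $R(L_p(K)) \twoheadrightarrow R^\prime(L_p(K))$ so that $E$ lies in the complement $C$. Stage 3, however, has a genuine gap. You pass to $C/JC$ and work with $\bar E$, the image of $E/JE = E_G$, but the quantity that controls the minimal rank of a free summand $M$ of $C$ with $E \subset M$ is the socle $E^G = \Soc E$, not the top $E_G$. To put $E$ inside a free module $\F_p[\Gamma]^r$ you need $\Soc E \hookrightarrow \Soc(\F_p[\Gamma]^r) \cong \F_p[\Phi]^r$, so $r$ is governed by $\rank_{\F_p[\Phi]} E^G$; the top $E/JE$ controls surjections onto $E$ (Nakayama), not embeddings of $E$ into summands. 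Concretely, lifting idempotents from a splitting $C/JC = \bar M \oplus \bar Q$ with $\bar E \subset \bar M$ does not force $E \subset M$: take $E$ a simple submodule of $\Soc Q$, then $\bar E = 0 \subset \bar M$ automatically, yet no automorphism $(m,q) \mapsto (m, q - \phi(m))$ moves $E$ into $M$ since the $M$-component of $E$ is zero. Your "Nakayama-type adjustment" requires $E \to Q$ to factor through $E \to M$, which is not available. The paper circumvents this by taking $M_1$ to be the image under $g$ of the injective hull $\tilde M$ of $E$ (where $g$ extends the inclusion $E \hookrightarrow R(L_p(K))$ and is injective because $E \to \tilde M$ is essential), then proving $M_1 \cap N = 0$ via $\Soc(M_1 \cap N) = N \cap \Soc E \subset N \cap E = 0$, and finally complementing $M_1 \oplus N$.

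The more serious omission is that you do not actually prove the rank bound, which is the content of the lemma; you only assert that $d(G)+r(G)$ "is extracted from" a presentation of $G$ together with \cite{MR3617363} and \cite{hajir2017analytic}. The paper derives $r = \rank_{\F_p[\Phi]} E^G \leq \frac12[F:\Q] + d(G) + r(G)$ by a concrete chain: an exact sequence placing $E^G$ between $\mathcal O_K^\times/\mathcal O_K^{\times p}$ and $H^1(G, \mathcal O_{L_p(K)}^\times/\mu_p)$; a second exact sequence bounding the latter by $H^1(G, \mathcal O_{L_p(K)}^\times)$ and $H^2(G,\mu_p)$; the identification $H^1(G, \mathcal O_{L_p(K)}^\times) \cong \ker(\Cl_K \to \Cl_{L_p(K)}) \cong G^{\mathrm{ab}}$, contributing $d(G)$; the bound $d_p H^2(G,\mu_p) \le r(G)$; and the $\F_p[\Phi]$-semisimple Dirichlet unit theorem for the $\frac12[F:\Q]$ term. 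It also needs the duality step $\rank_{\F_p[\Phi]}E^G = \rank_{\F_p[\Phi]}(E^*)_G$, followed by Nakayama to get $\F_p[\Gamma]^r \twoheadrightarrow E^*$, and dualization to $E \hookrightarrow \F_p[\Gamma]^r$, to conclude $\tilde M \hookrightarrow \F_p[\Gamma]^r$. Without this computation the lemma's third bullet is unproved.
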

\begin{proof}
	Note that $\F_p[\Gamma]$ is a Frobenius algebra, so injective $\F_p[\Gamma]$-modules are the same as projective $\F_p[\Gamma]$-modules (see Section 8.5 in \cite{MR3617363}). In particular, any free $\F_p[\Gamma]$-module is injective. 

	Since $\Phi$ has order prime to $p$, the projective indecomposable $\F_p[\Gamma]$-modules $P_S$ are in a one-to-one correspondence with the simple $\F_p[\Phi]$-modules $S$ (Proposition 8.3.2 in \cite{MR3617363}). It follows that $\F_p[\Gamma]$ can be decomposed as a sum of projective indecomposable modules \[\F_p[\Gamma] = \displaystyle \bigoplus_{S \text{ simple}} P_S^{n_s},\] where $n_s = \dim_D(S)$, $D = \End_{\F_p[\Phi]}S$. 

	From Section 8.5 in \cite{MR3617363}, we know that any $\F_p[\Gamma]$-module has a unique injective hull. Let $\tilde{M} = \oplus P_S^{\alpha_S}$ be the injective hull of the $\F_p[\Gamma]$-module $E$. Consider the following diagram
	\[
    \begin{tikzcd}
    E\arrow[hookrightarrow]{r}{f}\arrow[hookrightarrow]{d}{i} 
 	& \tilde{M} \arrow[dl, dashed, "g"]\\
  	R(L_p(K))
	\end{tikzcd}
	\]
The $\F_p[\Gamma]$-module $R(L_p(K))$ is free by Lemma \ref{lemmafree}, so it is injective. The map $i$ is the usual inclusion map from $E$ to $R(L_p(K))$. The map $f$ is the essential monomorphism $E \to \tilde{M}$. Since $R(L_p(K))$ is an injective $\F_p[\Gamma]$-module, there exists a map $g \colon \tilde{M} \to R(L_p(K))$ such that $g \circ f = i$. Moreover, since $f$ is an essential monomorphism, the map $g$ is injective. Let $M_1 = \im(g) \subset R(L_p(K))$; the module $E$ can be seen as a submodule of $M_1$. 

By definition of $N$, we have a short exact sequence of $\F_p[\Gamma]$-modules \[1 \to N \to R(L_p(K)) \to R^\prime(L_p(K)) \to 1.\] 
Since $R^\prime(L_p(K))$ is a free module (Lemma \ref{lemmafree}), this sequence splits. It follows that $N$ is a stably free $\F_p[\Gamma]$-module, which implies that $N$ is a free $\F_p[\Gamma]$-module, of rank $s$ (Example 4.7(3) in \cite{MR2235330}). 

Consider the intersection $M_1 \cap N$. Let $\Soc(M_1 \cap N)$ be the socle of $M_1 \cap N$. For details about this notion, see Section 6.3 of \cite{MR3617363}. Since $\F_p[\Gamma]$ is an Artinian ring, every nonzero module has a simple submodule. It follows that if $M_1 \cap N$ is nonzero, then $\Soc(M_1 \cap N)$ must be nonzero. Moreover, $M_1 \cap N$ is a submodule of $M_1$. Using the facts that $\Soc E \cong \Soc M_1$, $\Soc K \subset K$ and $\Soc K = K \cap \Soc M_1$, for all submodules $K$ of $M_1$, we obtain
\begin{align*}
	\Soc(M_1 \cap N) &=(M_1 \cap N) \cap \Soc M_1 \\ &= N \cap \Soc M_1 \\
	&\cong N \cap \Soc E \\ &\subset N \cap E \\&= 0. 
\end{align*}
It follows that $M_1 \cap N = 0$, so $M_1 + N$ is a direct sum in $R(L_p(K))$. Since $M_1 \oplus N$ is a projective $\F_p[\Gamma]$-module, it must also be injective, so the following exact sequence splits: 
\begin{align*}
	1 \to M_1 \oplus N \to R(L_p(K)) \to R(L_p(K))/(M_1 \oplus N) \to 1.
\end{align*}
Let $Q_1 = R(L_p(K))/(M_1 \oplus N)$. This is a projective $\F_p[\Gamma]$-module, so it can be written as $Q_1 = \oplus P_S^{\beta_S}$ with the property that $\alpha_S + \beta_S = [F\colon \Q] \cdot n_S$.

We would like to estimate $\beta_S$. Let $r = \rank_{\F_p[\Phi]}E^G = \rank_{\F_p[\Phi]}(E^G)^*=\rank_{\F_p[\Phi]}(E^*)_G$. Thus, $\F_p[\Phi]^r \twoheadrightarrow (E^*)_G$. By Nakayama's Lemma, $\F_p[\Gamma]^r \twoheadrightarrow E^*$. Taking duals and using the fact that $\F_p[\Gamma]$ is self-dual, we obtain that $E \cong E^{**} \hookrightarrow \F_p[\Gamma]^r$. Since $\tilde{M}$ is the injective hull of $E$ and $\F_p[\Gamma]$ is an injective module, we obtain that $\tilde{M} \hookrightarrow \F_p[\Gamma]^r$, which implies that $\alpha_S \leq n_S \cdot r$, so it is enough to estimate $r$. To compute this rank $r$, we follow the idea in Section 6 of \cite{hajir2017analytic}. 

On one hand, from the exact sequence
\[
0 \to \mathcal{O}^\times_{L_p(K)}/\mu_p \xrightarrow{p} \mathcal{O}^\times_{L_p(K)} \to \mathcal{O}^\times_{L_p(K)}/p \to 0,
\]
we derive the sequence
\[
(\mathcal{O}^\times_{L_p(K)}/\mu_p)^G \to (\mathcal{O}^\times_{L_p(K)})^G \to (\mathcal{O}^\times_{L_p(K)}/p)^G \to H^1(G, \mathcal{O}^\times_{L_p(K)}/\mu_p). 
\]
We observe that
\begin{itemize}
	\item $(\mathcal{O}^\times_{L_p(K)}/p)^G = E^G$,
	\item $(\mathcal{O}^\times_{L_p(K)})^G \big/ (\mathcal{O}^\times_{L_p(K)}/\mu_p)^G \cong (\mathcal{O}_K^\times)/(\mathcal{O}_K^\times/\mu_p) \cong 
	\mathcal{O}^\times_K/\mathcal{O}^{\times p}_K$,
\end{itemize}
so the sequence becomes
\begin{align}\label{eq1}
	\mathcal{O}^\times_K/\mathcal{O}^{\times p}_K \to E^G \to H^1(G, \mathcal{O}^\times_{L_p(K)}/\mu_p). 
\end{align}

On the other hand, from the exact sequence
\[
0 \to \mu_p \to \mathcal{O}^\times_{L_p(K)} \to \mathcal{O}^\times_{L_p(K)}/\mu_p \to 0,
\]
we get the exact sequence
\begin{align}\label{eq2}
	H^1(G, \mathcal{O}^\times_{L_p(K)}) \to H^1(G, \mathcal{O}^\times_{L_p(K)}/\mu_p) \to H^2(G, \mu_p).
\end{align}
The $p$-group $G$ acts trivially on $\mu_p$, so for $i = 1, 2$, the groups $H^i(G, \mu_p)$ describe the generators and relations of $G$. 

Now, if $j \colon \Cl_K \to \Cl_{L_p(K)}$ is the map induced by the inclusion $K \hookrightarrow L_p(K)$, then $H^1(G, \mathcal{O}^\times_{L_p(K)}) \cong \ker j$ (see 2 in \cite{MR0076803}). 
Moreover, $\ker j$ is equal to the $p$-primary part of $\Cl_K$, which is isomorphic to $\Gal(L_p(K)/K)^{\ab} = G^{\ab}$, by class field theory. From the semisimple version of Dirichlet's unit theorem (Theorem 6.1 in \cite{hajir2017analytic}; for a proof, see Theorem 6.1 in \cite{MR1828251}) and the fact that $F$ is totally imaginary, we obtain that 
\begin{align}\label{eq3}
	\rank_{\F_p[\Phi]}(\mathcal{O}^\times_K/\mathcal{O}^{\times p}_K) \leq \frac{1}{2}[F\colon \Q]. 
\end{align}

From (\ref{eq1}), (\ref{eq2}) and (\ref{eq3}) it follows that
\begin{align*}
	r = \rank_{\F_p[\Phi]}(E^G) &\leq \rank_{\F_p[\Phi]}(\mathcal{O}_K^\times/\mathcal{O}_K^{\times p}) + d_p H^1(G, \mathcal{O}^\times_{L_p(K)}/\mu_p) \\ &\leq \rank_{\F_p[\Phi]}(\mathcal{O}^\times_K/\mathcal{O}_K^{\times p}) + d_p H^1(G, \mathcal{O}^\times_{L_p(K)}) + d_p H^2(G, \mu_p) \\
	&\leq \frac{1}{2}[F\colon \Q] + d(G) + r(G),
\end{align*}
where $d_p$ is the usual $p$-rank, and $d(G)$ and $r(G)$ are the number of generators and relations of $G$. Therefore:
\begin{align*}
	\beta_S &= [F\colon \Q]\cdot n_S - \alpha_S\\
&\geq [F\colon \Q]\cdot n_S - n_S \cdot r \\
&\geq [F\colon \Q]\cdot n_S - n_S \cdot \left(\frac{1}{2}[F\colon \Q] + d(G) + r(G)\right) \\
&\geq n_S \left(\frac{1}{2}[F\colon \Q] - d(G) - r(G)\right). 
\end{align*}
We can thus choose $t \geq (\frac{1}{2}[F\colon \Q] - d(G) - r(G))$ such that $Q := \oplus P_S^{n_S \cdot t}$ is isomorphic to a submodule of $Q_1$. This new module $Q$ is a free $\F_p[\Gamma]$-module of rank $t$. Moreover, it is injective, so $P = Q_1/Q$ is a projective $\F_p[\Gamma]$-module with $Q_1 = Q \oplus P$. Let $M = M_1 \oplus P$. Then 
\[
R(L_p(K)) = M_1 \oplus N \oplus Q_1 \cong M_1 \oplus N \oplus Q \oplus P \cong M \oplus N \oplus Q,
\]
with $E \subset M$ and $\rank_{\F_p[\Gamma]} Q \geq \frac{1}{2}[F\colon \Q] - d(G)-r(G)$. 

Since $M$ is a stably free $\F_p[\Gamma]$-module, we can conclude that it is a free $\F_p[\Gamma]$-module, so the proof is complete. 
\end{proof}

The only thing left to show is the existence of a prime $\Lambda$ of $L_p(K)$ with properties (\ref{cond1}) and (\ref{cond2}). The proof follows the steps of Proposition 1 in the first version of \cite{MR2772089}. Let $M$ and $Q =  \bigoplus_{i=1}^t \F_p[\Gamma]q_i$ be the $\F_p[\Gamma]$-submodules of $R(L_p(K))$ given by Lemma \ref{lemma2}. Then, by assumption,
\[
t \geq \frac{1}{2}[F\colon\Q] - d(G)-r(G) \geq d(G) + d(\Phi) \geq d(\Gamma).
\]
Let $\{ \sigma_1, \dots, \sigma_d\}$ be a system of minimal generators for $\Gamma$, $d = d(\Gamma)$. Let $u = \sum_{i=1}^d (\sigma_i - 1)q_i \in Q \subset R(L_p(K))$. By Lemma \ref{lemma3} for $M = L_p(K)$, there exists $\Lambda\mathcal{O}_{L_p(K)}$ a prime of degree $1$, not lying over $p$, such that $u = \left(\Lambda \mod{U(L_p(K))^p} \right)$. Assume that there exist $\varepsilon \in \mathcal{O}_{L_p(K)}^\times$ and $a_{\sigma} \in \F_p$ such that $\displaystyle(\varepsilon \mod{U^\prime(L_p(K))}) + \sum_{\sigma \in \Gamma} a_\sigma(\sigma \Lambda \mod{U^\prime(L_p(K))}) =0$. Observe that:
\begin{itemize}
	\item $\displaystyle \varepsilon \mod{U(L_p(K))^p} + \sum_{\sigma \in \Gamma} a_\sigma(\sigma \Lambda \mod{U(L_p(K))^p})  \in N$;
	\item $\displaystyle \sum_{\sigma \in \Gamma} a_\sigma(\sigma \Lambda \mod{U(L_p(K))^p}) = \sum_{\sigma \in \Gamma} a_\sigma(\sigma u) \in Q$;
	\item $\varepsilon \mod{U(L_p(K))^p} \in E \subset M$.
\end{itemize}
By Lemma \ref{lemma2}, it follows that $\varepsilon \mod{U(L_p(K))^p} = \sum_{\sigma \in \Gamma} a_\sigma(\sigma \Lambda \mod{U(L_p(K))^p})= 0$. On one hand, since $U(L_p(K))^p \cap \mathcal{O}^\times_{L_p(K)} = \mathcal{O}^{\times p}_{L_p(K)}$, it follows that $\varepsilon \in \mathcal{O}^{\times p}_{L_p(K)}$. On the other hand, $\sum_{\sigma \in \Gamma} a_\sigma(\sigma \Lambda \mod{U(L_p(K))^p}) = 0$ implies $\sum_{\sigma \in \Gamma} a_\sigma \sigma [\sum_{i=1}^d (\sigma_i - 1) q_i] = \sum_{\sigma \in \Gamma} a_\sigma(\sigma u) = 0$. This implies $\sum_{\sigma \in \Gamma} a_\sigma \sigma (\sigma_i - 1) = 0$, for all $1 \leq i \leq d$, so $\sum_{\sigma \in \Gamma} a_\sigma \sigma (\tau - 1) = 0$, for all $\tau \in \Gamma$, meaning that $a_\sigma$ must be constant for all $\sigma \in \Gamma$, i.e. $a_\sigma = a \in \F_p$, for some $a \in \F_p$. We have thus shown that $\Lambda$ has properties (\ref{cond1}) and (\ref{cond2}), so the proof is complete. 

\section{Embedding Problem}
In this section, we introduce some results about the embedding problem, used in the proof of Proposition \ref{prop2}. A detailed exposition of this can be found in \cite{MR0337894}. 

Let $F$ be a number field and let $G_F$ be the absolute Galois group of $F$. Let $K/F$ be a finite Galois extension with Galois group $G$. For an extension of finite groups $(\varepsilon) \colon 1 \to A \to E \to G \to 1$, the embedding problem $(G_F, \varepsilon)$ is defined by the diagram 
\begin{center}
		\begin{tikzcd}
		 &&&&G_F \ar[d, "\varphi"] \\
	&1 \ar[r] & A \ar[r] & E \ar[r, "\pi"] & G \ar[r] &1,
\end{tikzcd}
	\end{center}
	where $\varphi$ is the canonical surjection. A continuous homomorphism $\psi \colon G_F \to E$ is called a solution of $(G_F, \varepsilon)$ if it satisfies the condition $\pi \circ \psi = \varphi$. A solution $\psi$ is called a proper solution if it is surjective. If $(\varepsilon)$ is a nonsplit extension, then every solution of the embedding problem is a proper solution (Satz 2.3 in \cite{MR0244190}). We are only interested in the case when the extension is nonsplit, so we can assume that if a solution to the embedding problem exists, then it is proper. This is the same as finding an extension $M/F$ containing $K/F$ such that $\Gal(M/F)\cong E$ compatibly with $\Gal(K/F)=G$. 
	When such a solution exists, we say that $(G_F, \varepsilon)$ is solvable.  

	For each prime $\mathfrak{p}$ of $F$, we denote by $F_\mathfrak{p}$ (resp. $K_\mathfrak{p}$) the completion of $F$ at $\mathfrak{p}$ (resp. of $K$ at a prime above $\mathfrak{p}$). Let $G_{F_\mathfrak{p}}$ be the absolute Galois group of $F_\mathfrak{p}$, $G_\mathfrak{p} = \varphi(G_{F_\mathfrak{p}}) \subset G$ (which is isomorphic to the decomposition group of $\mathfrak{p}$ in $\Gal(K/F)$) and $E_\mathfrak{p} = \pi^{-1}(G_\mathfrak{p}) \subset E$. Then the local embedding problem $(G_{F_\mathfrak{p}}, \varepsilon_\mathfrak{p})$ is defined by 
	\begin{center}
		\begin{tikzcd}
		&&&&G_{F_\mathfrak{p}} \ar[d, "\varphi_\mathfrak{p}"] \\
&1 \ar[r] & A \ar[r] & E_\mathfrak{p} \ar[r, "\pi_\mathfrak{p}"] & G_\mathfrak{p} \ar[r] &1,
\end{tikzcd}
	\end{center}

	We have the following results from \cite{MR0337894} (Satz 2.2, Satz 4.7, Satz 5.1).
	\begin{theorem}\label{satz2.2} 
		Let $(G_F, \varepsilon)$ be an embedding problem with abelian kernel $A$. If the map 
		\begin{align*}
			H^2(G_F, A) \to \prod_{\mathfrak{p} \in P} H^2(G_{F_\mathfrak{p}}, A)
		\end{align*}
		is injective, then the embedding problem $(G_F, \varepsilon)$ has a solution if and only if the local embedding problems $(G_{F_\mathfrak{p}}, \varepsilon_\mathfrak{p})$ have solutions, for all $\mathfrak{p} \in P$. Here $P$ is the set of primes of $F$. 
	\end{theorem}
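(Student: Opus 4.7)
The plan is to translate both global and local solvability of the embedding problem into the vanishing of explicit obstruction classes in $H^2$, and then exploit the injectivity hypothesis.

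First, I would recall the cohomological interpretation of group extensions: the abelian extension $(\varepsilon)\colon 1 \to A \to E \to G \to 1$ defines a class $[\varepsilon] \in H^2(G, A)$, where $A$ is equipped with the $G$-action induced by conjugation in $E$. Since $G_F$ acts on $A$ through $\varphi\colon G_F \to G$, pulling back $(\varepsilon)$ along $\varphi$ gives a continuous extension of profinite groups
\begin{equation*}
\varphi^*(\varepsilon)\colon 1 \to A \to E \times_G G_F \to G_F \to 1,
\end{equation*}
whose class is the inflation $\varphi^*[\varepsilon] \in H^2(G_F, A)$. A continuous homomorphism $\psi\colon G_F \to E$ satisfying $\pi \circ \psi = \varphi$ is precisely a continuous section of the projection $E \times_G G_F \to G_F$, so $(G_F, \varepsilon)$ admits a solution if and only if $\varphi^*[\varepsilon] = 0$ in $H^2(G_F, A)$.

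The same argument applies locally. Because $\varepsilon_\mathfrak{p}$ is obtained from $\varepsilon$ by pulling back along the inclusion $G_\mathfrak{p} \hookrightarrow G$, one has $[\varepsilon_\mathfrak{p}] = \mathrm{res}^G_{G_\mathfrak{p}}[\varepsilon]$, and $(G_{F_\mathfrak{p}}, \varepsilon_\mathfrak{p})$ is solvable if and only if $\varphi_\mathfrak{p}^*[\varepsilon_\mathfrak{p}] = 0$ in $H^2(G_{F_\mathfrak{p}}, A)$. Functoriality of inflation and restriction then gives the commutative square
\begin{equation*}
\begin{tikzcd}
H^2(G, A) \ar[r, "\varphi^*"] \ar[d, "\mathrm{res}"'] & H^2(G_F, A) \ar[d, "\mathrm{res}_\mathfrak{p}"] \\
H^2(G_\mathfrak{p}, A) \ar[r, "\varphi_\mathfrak{p}^*"] & H^2(G_{F_\mathfrak{p}}, A),
\end{tikzcd}
\end{equation*}
so the local obstruction $\varphi_\mathfrak{p}^*[\varepsilon_\mathfrak{p}]$ equals the image of the global obstruction $\varphi^*[\varepsilon]$ under $\mathrm{res}_\mathfrak{p}$.

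With these translations in place, the theorem is immediate. If a global solution exists, then $\varphi^*[\varepsilon] = 0$, and restricting to each $\mathfrak{p}$ produces local solutions. Conversely, if every local problem is solvable then $\mathrm{res}_\mathfrak{p}(\varphi^*[\varepsilon]) = 0$ for all $\mathfrak{p} \in P$, so $\varphi^*[\varepsilon]$ lies in the kernel of the hypothesized injection $H^2(G_F, A) \hookrightarrow \prod_{\mathfrak{p} \in P} H^2(G_{F_\mathfrak{p}}, A)$ and therefore vanishes, yielding a global solution. The only subtlety, rather than a real obstacle, is verifying in the continuous/profinite setting that the existence of $\psi$ is controlled by the inflation class and that the inflation--restriction square commutes; both are standard facts from continuous group cohomology, so the actual content of the statement is the injectivity hypothesis, which reduces the global problem to a purely local one.
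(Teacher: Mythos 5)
Your argument is correct: it is the standard obstruction-theoretic proof, translating solvability of an embedding problem with finite abelian kernel into the vanishing of the pulled-back extension class $\varphi^*[\varepsilon]\in H^2(G_F,A)$, identifying the local obstructions as its restrictions, and invoking the injectivity hypothesis. The paper itself does not prove this statement but quotes it from Neukirch's work (Satz 2.2 of the cited reference), where the proof proceeds along essentially the same lines as yours.
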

	\begin{theorem}\label{satz4.7}
		If $A$ is a trivial finite $G$-module (i.e. $A = \Z/n\Z$) or $A$ is the dual of one (i.e. $A=\mu_n$), then all maps 
		\begin{align*}
			H^q (F, A) \to \prod_{\mathfrak{p}} H^q(F_\mathfrak{p}, A), \quad q \geq 0,
		\end{align*}
		are injective. Here we have $H^q (F, A) = H^q(G_F, A)$. 
	\end{theorem}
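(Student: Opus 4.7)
The plan is to analyze the localization map in each cohomological degree $q$ separately, handling $A=\Z/n\Z$ (with trivial action) and $A=\mu_n$ in parallel, since the two coefficient systems are Tate-dual and the Poitou--Tate nine-term exact sequence links the kernels of the corresponding localization maps. The case $q=0$ is immediate: $H^0(G_F,A)$ equals $\Z/n\Z$ or $\mu_n(F)$, and embeds into the $\mathfrak{p}$-th factor of the product for any single prime $\mathfrak{p}$.

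For $q=1$ I would treat the two coefficient systems separately. For $A=\Z/n\Z$, interpret $H^1(G_F,\Z/n\Z)=\mathrm{Hom}_{\mathrm{cts}}(G_F,\Z/n\Z)$ and observe that a character trivial on every decomposition group vanishes on a dense set of Frobenius elements, so by the Chebotarev density theorem it vanishes globally. For $A=\mu_n$, Kummer theory identifies $H^1(F,\mu_n)$ with $F^\times/(F^\times)^n$, and the required injectivity is the Grunwald--Wang theorem. The product $\prod_\mathfrak{p}$ must be understood to include the archimedean places of $F$, since the Wang counterexamples for $n$ divisible by a high power of $2$ are detected precisely at the real places.

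For $q=2$, the case $A=\mu_n$ follows from the Hasse--Brauer--Noether exact sequence
\[
0\to\mathrm{Br}(F)\to\bigoplus_\mathfrak{p}\mathrm{Br}(F_\mathfrak{p})\to\Q/\Z\to 0
\]
upon identifying $H^2(F,\mu_n)=\mathrm{Br}(F)[n]$ and restricting to $n$-torsion. The case $A=\Z/n\Z$ then follows from Poitou--Tate duality for the dual pair $(\Z/n\Z,\mu_n)$: the nine-term sequence identifies the kernel of $H^2(G_F,\Z/n\Z)\to\prod_\mathfrak{p}H^2(G_{F_\mathfrak{p}},\Z/n\Z)$ with the Pontryagin dual of the kernel of $H^1(F,\mu_n)\to\prod_\mathfrak{p}H^1(F_\mathfrak{p},\mu_n)$, which vanishes by the Grunwald--Wang input just established.

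For $q\geq 3$, I would appeal to vanishing: once archimedean places are taken into account, the strict cohomological dimension of $G_F$ with $n$-torsion coefficients is at most $2$, so both source and target of the localization map are zero. The main obstacle I anticipate is the $q=1$, $A=\mu_n$ case: the Grunwald--Wang special case is genuinely delicate and reflects a real failure of the naive local-global principle whenever the archimedean places are dropped from the product. Phrasing the entire argument inside the Poitou--Tate framework keeps the bookkeeping clean and makes the role of the archimedean places transparent throughout.
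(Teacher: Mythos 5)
Your overall strategy (degree-by-degree analysis via Chebotarev, the Brauer group, and Poitou--Tate duality) is a sensible way to reconstruct this statement; note that the paper itself does not prove it but quotes it verbatim from Neukirch's embedding-problem paper, so there is no internal argument to compare against. Your $q=0$ case, your $q=1$ case for $A=\Z/n\Z$ (Chebotarev), and your $q=2$ case for $A=\mu_n$ (Hasse--Brauer--Noether) are all correct.

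The genuine gap is your treatment of $q=1$, $A=\mu_n$, and it propagates to $q=2$, $A=\Z/n\Z$. You claim the Grunwald--Wang special case is ``detected precisely at the real places,'' so that injectivity of $F^\times/(F^\times)^n\to\prod_{\mathfrak p}F_{\mathfrak p}^\times/(F_{\mathfrak p}^\times)^n$ holds once the archimedean places are included. That is false: in the special case the kernel is nontrivial even with \emph{all} places in the product. Take $F=\Q(\sqrt{7})$ and $n=8$: the element $16$ is an $8$th power in every completion --- $16=(1+i)^8$ in $F_{\mathfrak p}=\Q_2(\sqrt{7})=\Q_2(i)$ at the unique prime above $2$, $16=(\sqrt{2})^8$ at both real places, and at every odd $\mathfrak p$ one of $2,-2,-1$ is a local square, giving an $8$th root of $16$ --- yet $16$ is not an $8$th power in $F$, since no $\zeta_8^k\sqrt{2}$ lies in $\Q(\sqrt{7})$. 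Hence $H^1(F,\mu_8)\to\prod_{\mathfrak p}H^1(F_{\mathfrak p},\mu_8)$ has nonzero kernel, and by the Poitou--Tate duality you invoke, so does $H^2(F,\Z/8\Z)\to\prod_{\mathfrak p}H^2(F_{\mathfrak p},\Z/8\Z)$; so the statement for arbitrary $n$ is simply not available by your route (nor is it true without excluding the special case, which requires $8\mid n$). The fix is to prove, and to use, the statement outside the special case --- e.g.\ for $8\nmid n$ --- which is all the paper needs: it applies the theorem only to modules of exponent $p$ after restriction to $G_K$, where the action is trivial and no special case can occur. A smaller inaccuracy: for $q\ge 3$, if $n$ is even and $F$ has real places then $H^q(G_F,\Z/n\Z)$ does \emph{not} vanish (the $2$-cohomological dimension of $G_F$ is infinite); injectivity in those degrees holds because the localization map is an isomorphism onto the sum over the real places, not because source and target are zero.
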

	\begin{theorem}\label{satz5.1}
		If $K_\mathfrak{p}/F_\mathfrak{p}$ is a cyclic extension of local fields, then the following conditions are equivalent.
		\begin{enumerate}[label = (\roman*)]
			\item Every embedding problem corresponding to the extension $K_\mathfrak{p}/F_\mathfrak{p}$ with an arbitrary (not necessarily abelian) kernel $A$ of exponent $n$ is solvable. 
			\item Every n-th root of unity in $F_\mathfrak{p}$ is the norm of an element of $K_\mathfrak{p}$. 
		\end{enumerate}

		This is always true if $K_\mathfrak{p}/F_{\mathfrak{p}}$ is unramified. 

		If $K_\mathfrak{p}/F_\mathfrak{p}$ is tamely ramified with ramification index $e$, then $(i)$ and $(ii)$ are true if and only if $n^\prime e \mid (q-1)$, where $n^\prime = \prod_{p \mid e} p^{v_p(n)}$ and $q$ is the number of elements of the residue field of $F_\mathfrak{p}$. 
	\end{theorem}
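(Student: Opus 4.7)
The plan is to establish the equivalence $(i)\Leftrightarrow(ii)$ by reinterpreting solvability cohomologically, and then to read off the two specialized statements from an explicit description of the local norm subgroup $N_{K_\mathfrak{p}/F_\mathfrak{p}} K_\mathfrak{p}^\times$. The first step is a reduction from an arbitrary kernel $A$ of exponent $n$ to an abelian one: filter $A$ by its descending central series $A \supset [A,A] \supset [A,[A,A]] \supset \cdots$, and lift solutions stage by stage through the resulting sequence of central extensions, whose kernels are abelian and of exponent dividing $n$. This lets one work throughout with abelian $A$.

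With $A$ abelian of exponent $n$, the obstruction to solving $(G_{F_\mathfrak{p}},\varepsilon_\mathfrak{p})$ is a class $\omega \in H^2(G_{F_\mathfrak{p}},A)$ inflated from the class of $(\varepsilon_\mathfrak{p})$ in $H^2(G_\mathfrak{p},A)$ along $\varphi_\mathfrak{p}$, and solvability amounts to $\omega = 0$. Since $G_\mathfrak{p}$ is cyclic, its Tate cohomology in degree $2$ is $A^{G_\mathfrak{p}}/N_{G_\mathfrak{p}} A$, and local Tate duality pairs $H^2(G_{F_\mathfrak{p}},A)$ perfectly with the $G_{F_\mathfrak{p}}$-invariants of $A^{*} := \mathrm{Hom}(A,\mu_n)$; pairing the obstruction against a character of $A$ then translates directly into a norm condition on the corresponding element of $\mu_n(F_\mathfrak{p}) \subset F_\mathfrak{p}^\times$. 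Testing the universal case $A = \mu_n$ produces an embedding problem whose obstruction corresponds to an arbitrary prescribed element of $\mu_n(F_\mathfrak{p})$, and every other abelian $A$ of exponent $n$ factors through direct sums of twists of $\mu_n$ over a sufficiently large unramified extension. Hence $(i)$ holds if and only if every $n$-th root of unity in $F_\mathfrak{p}$ is a norm from $K_\mathfrak{p}$, giving $(ii)$.

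For the unramified case, the norm map $\mathcal{O}_{K_\mathfrak{p}}^\times \to \mathcal{O}_{F_\mathfrak{p}}^\times$ is surjective (by Hilbert 90 on the residue extension together with successive approximation on higher unit filtrations), so every unit, in particular every root of unity, is automatically a norm. In the tamely ramified case, local class field theory identifies $N_{K_\mathfrak{p}/F_\mathfrak{p}} K_\mathfrak{p}^\times \cap \mathcal{O}_{F_\mathfrak{p}}^\times$ with the preimage in $\mathcal{O}_{F_\mathfrak{p}}^\times$ of the subgroup of $e$-th powers in $\F_q^\times$. A root of unity of order dividing $n$ in $F_\mathfrak{p}$ reduces to an element of $\F_q^\times$ of order dividing $\gcd(n,q-1)$, and requiring that \emph{every} such reduction be an $e$-th power, analyzed prime by prime, yields the arithmetic condition $n'e \mid q-1$ with $n' = \prod_{p \mid e} p^{v_p(n)}$. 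The step I expect to be hardest is pinning down the Tate-duality identification precisely enough to know that the universal $\mu_n$-coefficient problem genuinely detects all embedding problems with abelian kernel of exponent $n$; once that bridge is built, the central-series reduction and the two explicit norm-group computations are relatively routine.
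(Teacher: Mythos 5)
The paper does not prove this result: it is quoted verbatim from Neukirch's paper (Satz 5.1 of the cited reference) and used as a black box, so there is no in-paper proof to compare against. Judged on its own terms, the opening reduction of your sketch has a real gap. The lower central series of $A$ terminates at the identity only when $A$ is nilpotent; for a non-nilpotent solvable kernel (e.g.\ $A=S_3$ with $n=6$) or a non-solvable kernel (e.g.\ $A=A_5$ with $n=30$), both allowed by the hypothesis ``arbitrary kernel of exponent $n$'', the series stabilizes at a nontrivial term and never produces an abelian quotient, so the filtration does not even exist. Even for nilpotent $A$, lifting stage by stage changes the base: after solving the first abelianized problem, the next problem lives over the field cut out by that partial solution, which is a larger and generally non-cyclic extension of $F_\mathfrak{p}$, so neither the cyclicity hypothesis of the theorem nor the norm condition (ii) --- which is formulated specifically for $K_\mathfrak{p}/F_\mathfrak{p}$ and does not automatically imply a norm condition for the bigger field --- carries over. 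Exploiting the freedom in the choice of partial solutions to make the ladder work is precisely where the substance of Neukirch's theorem lies; it is not a ``relatively routine'' step that can be taken for granted.

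The remainder of the sketch is in the right spirit and largely sound. The local Tate-duality framework for the abelian case is the correct one, though the step from ``pair the obstruction $\omega$ against a $G_{F_\mathfrak{p}}$-equivariant $\chi\colon A\to\mu_n$'' to a norm condition on a specific element of $\mu_n(F_\mathfrak{p})$ compresses the periodicity isomorphism $H^2(G_\mathfrak{p},\mu_n)\cong \mu_n(F_\mathfrak{p})/N_{K_\mathfrak{p}/F_\mathfrak{p}}\mu_n(K_\mathfrak{p})$ and the verification that all of $\mu_n(F_\mathfrak{p})$ actually arises as $\chi$ and the extension class vary; the ``universal $\mu_n$'' claim and the factorization through twists over an unramified extension are asserted rather than argued. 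The final norm-subgroup computations are essentially right: unramified norms on units are surjective; in the tame case $N_{K_\mathfrak{p}/F_\mathfrak{p}}\mathcal{O}_{K_\mathfrak{p}}^\times$ is the preimage of $(\F_q^\times)^e$; the $p$-power roots of unity lie in $1+\mathfrak{m}_{F_\mathfrak{p}}$ and so are automatically norms from a tame extension; and the prime-by-prime count of which $n$-th roots of unity land in $(\F_q^\times)^e$ recovers exactly $n'e\mid q-1$ with $n'=\prod_{\ell\mid e}\ell^{v_\ell(n)}$. But without a correct reduction from an arbitrary kernel to the abelian case, the sketch does not yet establish the theorem.
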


Going back to our case, consider the following embedding problem:
\begin{center}
		\begin{tikzcd}
		&&&&G_{F} \ar[d] \\
&1 \ar[r] & V \ar[r] & \Gamma^\prime \ar[r] & \Gamma \ar[r] &1,
\end{tikzcd}
	\end{center}
with $\Gal(L_p(K)/F) = \Gamma = G \rtimes \Phi$, $\Gal(K/F) = \Phi$, and $K/F$ satisfies property \textbf{P} (which implies that $L_p(K)/F$ also satisfies this property). By Theorem \ref{satz5.1}, all the local embedding problems have solutions, so in order to use Theorem \ref{satz2.2}, we need to prove that the map 
\begin{align*}
			H^2(G_F, V) \to \prod_{\mathfrak{p} \in P} H^2(G_{F_\mathfrak{p}}, V)
\end{align*}
is injective. Note that $V$ is not a trivial $\F_p[\Gamma]$-module, so we can't apply Theorem \ref{satz4.7} directly. Consider the following commutative diagram

\[
    \begin{tikzcd}
    H^2(G_F, V)\arrow{r}  \arrow{d}{\text{res}}
    &\displaystyle \prod_{\mathfrak{p}} H^2(G_{F_\mathfrak{p}}, V) \arrow{d}{\text{res}} \\
    H^2(G_K, V)\arrow{r}
    & \displaystyle \prod_{\mathfrak{p}} H^2(G_{K_\mathfrak{p}}, V)
\end{tikzcd}
\]
Using spectral sequences, we can prove that $H^2(G_K, V)^\Phi \cong H^2(G_F, V)$, so the map on the left is injective. Similarly, it can be proved that the map on the right is injective. Since $K = \overline{F}^{\ker \overline{\rho}}$, where $\overline{\rho} \colon G_F \to \GL_2(\F_p)$ is irreducible and has image $\Phi$, the adjoint action coming from $G_F$ is killed on $G_K$, so 
\[
H^2(G_K, V) \cong H^2(G_K, \F_p^n),
\]
where $n = \dim_{\F_p} V$. 
So by Theorem \ref{satz4.7}, the map on the bottom is injective. It thus follows that the top map is injective, and so Theorem \ref{satz2.2} applies. 

On one hand, if the group extension is split, we claim that a solution to the embedding problem is given by $M = L_p(K)(\sqrt[p]{a_1}, \dots, \sqrt[p]{a_n})/F$, with $a_1, \dots, a_n \in K$, and $\Gal(M/F) \cong \Gamma^\prime \cong V \rtimes \Gamma$. On the other hand, we claim that any two solutions differ by a split extension. To summarize: 

\begin{proposition}\label{embedding}
	Let $K/F$ be an extension with Galois group $\Phi$ that satisfies property \textbf{P}. Consider the extension $L_p(K)/F$ with Galois group $\Gamma$. The embedding problem 
	\[
	1 \to V \to \Gamma^\prime \to \Gamma \to 1
	\]
	always has a solution. Furthermore, if $L_p(K)(\sqrt[p]{\alpha_1}, \dots, \sqrt[p]{\alpha_n})/F$ is a solution, with $\alpha_i \in L_p(K)^\times/L_p(K)^{\times p}$, then all the other solutions are given by $L_p(K)(\sqrt[p]{\alpha_1 a_1}, \dots, \sqrt[p]{\alpha_n a_n})/F$, where $a_i \in K^\times/K^{\times p}$, $\alpha_i a_i \neq 0$ in $L_p(K)^\times/L_p(K)^{\times p}$, and $\Gal(K(\sqrt[p]{a_1}, \dots, \sqrt[p]{a_n})/F) \cong V \rtimes \Phi$. 
\end{proposition}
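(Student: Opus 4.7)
My plan is to treat the three pieces of the statement in turn: solvability of the embedding problem, construction of a split solution via Kummer theory, and parametrization of the set of all solutions. Solvability is essentially assembled in the discussion preceding the statement. For local solvability I would apply Theorem \ref{satz5.1} with $n = p$: property \textbf{P} forces each $K_\mathfrak{p}/F_\mathfrak{p}$ to be either unramified or tamely ramified with $p \nmid e$ and $e \mid (q-1)$, so $n' = \prod_{p \mid e} p^{v_p(p)} = 1$ and the required condition $n'e \mid (q-1)$ holds. For injectivity of $H^2(G_F, V) \hookrightarrow \prod_\mathfrak{p} H^2(G_{F_\mathfrak{p}}, V)$ I would run the diagram in the text: the Hochschild--Serre spectral sequence applied to $1 \to G_K \to G_F \to \Phi \to 1$ gives injectivity of the vertical restrictions since $|\Phi|$ is coprime to $p$ and $V$ is $p$-torsion (so $H^i(\Phi, V^{G_K}) = 0$ for $i \geq 1$), while over $G_K$ the $\Gamma$-action on $V$ is trivial, so Theorem \ref{satz4.7} applies to the bottom row. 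Theorem \ref{satz2.2} then yields a global solution.

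For the split case, I would build the solution concretely via Kummer theory over $K$. Since $\mu_p \subset F \subset K$ and $|\Phi|$ is coprime to $p$, the Kummer module $K^\times/K^{\times p}$ is semisimple as an $\F_p[\Phi]$-module; by Lemma \ref{lemmafree} combined with the semisimple Dirichlet unit theorem, its dimension is large enough to contain an $\F_p[\Phi]$-submodule isomorphic to $V$. Choosing Kummer generators $a_1, \dots, a_n \in K^\times$ spanning such a submodule, Kummer duality gives that $N := K(\sqrt[p]{a_1}, \dots, \sqrt[p]{a_n})$ is Galois over $F$ with group $V \rtimes \Phi$. Setting $M := N \cdot L_p(K) = L_p(K)(\sqrt[p]{a_1}, \dots, \sqrt[p]{a_n})$, I would verify $N \cap L_p(K) = K$ by arranging (via Lemma \ref{lemma3} applied to $K$) that the $a_i$ produce an extension of $K$ ramified at a prime not above $p$, while $L_p(K)/K$ is unramified everywhere. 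Linear disjointness then yields $\Gal(M/F) \cong V \rtimes \Gamma$, so $M$ is the desired split solution.

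For the general parametrization, I would use that the set of solutions to an embedding problem with abelian kernel is, when nonempty, a torsor under $H^1(G_F, V)$. Hochschild--Serre again yields $H^1(G_F, V) \cong H^1(G_K, V)^\Phi \cong \operatorname{Hom}_\Phi(G_K^{\mathrm{ab}}, V)$, since $G_K$ acts trivially on $V$, and Kummer theory identifies the relevant $p$-torsion quotient of $G_K^{\mathrm{ab}}$ with the dual of $K^\times/K^{\times p}$. Thus torsor twists correspond to $\F_p[\Phi]$-equivariant systems of classes $a_i \in K^\times/K^{\times p}$, giving the description $L_p(K)(\sqrt[p]{\alpha_1 a_1}, \dots, \sqrt[p]{\alpha_n a_n})$; the condition that $\Gal(K(\sqrt[p]{a_1}, \dots, \sqrt[p]{a_n})/F) \cong V \rtimes \Phi$ is precisely the $\F_p[\Phi]$-module condition ensuring that the twist lives in the solution torsor. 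The main obstacle I expect is controlling whether the twisted generators $\alpha_i a_i$ remain linearly independent modulo $L_p(K)^{\times p}$ (hence the nonvanishing condition in the statement) and whether the $\F_p[\Phi]$-submodule they span inside $L_p(K)^\times/L_p(K)^{\times p}$ is still isomorphic to $V$ as a $\Gamma$-module; these conditions cannot be extracted formally from the torsor structure and will require careful Kummer-theoretic bookkeeping of how the image of $K^\times/K^{\times p}$ sits inside $L_p(K)^\times/L_p(K)^{\times p}$.
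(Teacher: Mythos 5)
Your proposal follows the same route as the paper: local solvability via Theorem \ref{satz5.1} using property \textbf{P} (so $n'=1$ and $e \mid (q-1)$), injectivity of $H^2(G_F,V) \to \prod_\mathfrak{p} H^2(G_{F_\mathfrak{p}},V)$ by restricting to $G_K$, where the action on $V$ is trivial so Theorem \ref{satz4.7} applies, with the vertical maps handled by the prime-to-$p$ spectral-sequence argument, and then Theorem \ref{satz2.2}. For the split solution and the parametrization of all solutions the paper merely asserts the claims, and your Kummer-theoretic construction together with the $H^1(G_F,V)$-torsor description is precisely the standard argument it implicitly invokes, so the two approaches coincide.
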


\section{Proof of Proposition \ref{prop2}} \label{proofofprop2}
In this section, we will provide a proof for Proposition \ref{prop2}. The proof can be split into two cases: when the following sequence splits or when it doesn't split: 
\[
1 \to V \to \Gamma^\prime \to \Gamma \to 1. 
\]
The case when the extension does not split will use the embedding problem combined with the case when the extension splits, and will be treated at the end of this section. Assume first that the sequence splits. In this case, $\Gamma^\prime \cong V \rtimes \Gamma$, and we can work over $K$. 

Let $n=\dim_{\F_p} V$ and let $m = |\Phi|$. Let $g_1, \dots, g_n$ be generators of the action of $\Phi$ on $V$. Let $T = \frac{(p^{2n}-1)(p^{2n}-p)}{(p^2-1)(p^2-p)}$. 
We can assume that $[F\colon \Q] \geq 2d (T+2)$, where $d = d(\Phi)$ is the number of generators of $\Phi$, by replacing $F$ with some finite extension of $F$ given by Proposition \ref{prop1}. Recall that Proposition \ref{prop1} constructs a new extension that satisfies property \textbf{P} if the initial extension satisfied this property. Note that $R(L_p(K))^G \cong R(K)$ and $R^\prime(L_p(K))^G \cong R^\prime(K)$ as $\F_p[\Phi]$-modules, $N^G \cong \ker(R(K) \to R^\prime(K))$, and $(\mathcal{O}_K^\times \otimes \F_p) \cap Q^G = 0$, where $Q$ is the $\F_p[\Gamma]$-module obtained in Lemma \ref{lemma2}. Let $\{ \sigma_1, \dots, \sigma_d\}$ be a generator system of $\Phi$. The free $\F_p[\Phi]$-module $Q^G$ can be written as $Q^G =\bigoplus_{i=1}^t \F_p[\Phi]q_i$, with $t \geq d(T+2)$. 

Using Lemma \ref{lemma3} applied to $M=K$, we obtain primes $\lambda_i \mathcal{O}_K$ that are completely split in $L_p(K)/K$, and satisfy $N(\lambda_i) \equiv 1 \pmod{p}$ and 
\[
\lambda_i \mod{U(K)^p} = \sum_{j=1}^d (1+\sigma_j)q_{(i-1)d+j}
\] 
in $R(K)$, for $1 \leq i \leq T$. Moreover, we can also assume that the primes of $F$ below $\lambda_i$ split completely inn $K/F$

Since $\Phi$ has order prime to $p$, the $\F_p[\Phi]$-module $R(K)$ can be decomposed as a direct sum of isotypic components: 
\[ R(K) = \bigoplus_W W^{n_W},\] where each $W$ is a simple $\F_p[\Phi]$-representation.  Each $a \in R(K)$ may therefore be written as $a = \sum a_W$, where $a_W \in W^{n_W}$. The isotypic projection $P_W$ of $R(K)$ onto $W^{n_W}$ is given by the formula 
\[
P_W = \frac{\dim W}{\mid\Phi\mid \cdot \dim_{\F_p}\End W} \sum_{g \in \Phi} \chi_W(g^{-1}) g,
\]
where $\chi_W$ is the character of $W$. This is a modified version of Theorem 8 in \cite{MR0450380} that works for (not necessarily algebraically closed) finite fields. Note that $P_W (P_U (a)) = a_W$ if $U \cong W$ and $P_W (P_U (a)) = 0$ if $U \not\cong W$. To ease notation, we will write $n_{g, W} = \frac{\dim W \cdot \chi_W(g^{-1})}{\mid\Phi\mid \cdot \dim_{\F_p}\End W}$. We can then write $P_W (a) = \displaystyle \sum_{g \in \Phi} n_{g, W} g(a)$. Note that since $V$ is an absolutely irreducible $\F_p[\Phi]$-representation, $n_{g, V}$ is well-defined and nonzero in $\F_p$. 

Let $(a_1, \dots, a_n, b_1, \dots, b_n)$ and $(x_1, \dots, x_n, y_1, \dots, y_n)$ be two nonzero elements of $(\Z/p\Z)^{2n}$ that are not multiples of each other. This pair generates a $(\Z/p\Z)^2$-subgroup of $(\Z/p\Z)^{2n}$. There are $\frac{(p^{2n}-p)(p^{2n}-1)}{(p^2-p)(p^2-1)}$ subspaces spanned by such a pair, i.e. $(\Z/p\Z)^2$-subgroups; label them from $1$ to $\frac{(p^{2n}-p)(p^{2n}-1)}{(p^2-p)(p^2-1)}$. For each such pair, choose a prime $\lambda_\ell$ from the ones constructed above (we can do this since $T = \frac{(p^{2n}-p)(p^{2n}-1)}{(p^2-p)(p^2-1)}$) and define $\nu_1$ and $\nu_2$ to be products of conjugates of $\lambda_\ell$, with $1 \leq \ell \leq T$, with the properties that the exponents of $g_1^{-1}(\lambda_\ell), g_2^{-1}(\lambda_\ell), \dots g_n^{-1}(\lambda_\ell)$ in $\nu_1$ are $a_1, \dots, a_n$, and the exponents of $g_1^{-1}(\lambda_\ell), g_2^{-1}(\lambda_\ell), \dots g_n^{-1}(\lambda_\ell)$ in $\nu_2$ are $b_1, \dots, b_n$, respectively. Write $a_{i, \ell} = a_i$, $b_{i, \ell} = b_i$, $x_{i, \ell} = x_i$, and $y_{i, \ell} = y_i$. Let $\displaystyle \nu_1 = \prod_{i=1}^T \prod_{g \in \Phi} g(\lambda_i)^{s_{g,i}}$ and $\displaystyle \nu_2 = \prod_{i=1}^T \prod_{g \in \Phi} g(\lambda_i)^{t_{g,i}}$. Write $g_i(\nu_1) = \lambda_\ell^{a_{i, \ell}} \omega_{i, \ell}$ and $g_i(\nu_2) = \lambda_\ell^{b_{i, \ell}} \xi_{i, \ell}$, for all $1\leq i\leq n$, with $\omega_{i, \ell}$ and $\xi_{i, \ell}$ not divisible by $\lambda_\ell$. For each $\ell$, let $\overbar{r_\ell}$ be a non $p$-power modulo $\lambda_\ell$. Lift this $\overbar{r_\ell}$ to a principal ideal $\kappa_\ell$ in $\mathcal{O}_K$. We distinguish two cases, each containing two subcases: 
\begin{enumerate}
	\item If $a_{i, \ell} \neq 0$, for some $i$, let 
	\begin{align*}
		c_{j, \ell} &= \left\{
		\begin{array}{ll}
			x_{j, \ell} a_{i, \ell} - x_{i, \ell} a_{j, \ell}, &\text{ if } j \neq i \\
			0, &\text{ if } j = i,
		\end{array}
		\right.\\
		d_{j, \ell} &= y_{j, \ell} a_{i,\ell} - x_{i, \ell}b_{j, \ell}, \text{ for all } j.  
	\end{align*}
	Since $(a_{1, \ell}, \dots, a_{n, \ell}, b_{1, \ell}, \dots, b_{n, \ell})$ and $(x_{1, \ell}, \dots, x_{n, \ell}, y_{1, \ell}, \dots, y_{n, \ell})$ are not multiples of each other, at least one of $c_{j, \ell}$ and $d_{j, \ell}$ is nonzero. 
	\begin{enumerate}
		\item If $c_{j, \ell} \neq 0$, for some $j \neq i$, let 
		\begin{align*}
			A_{k, \ell} &= \left\{
			\begin{array}{ll}
				\omega_{k, \ell}^{-1} &\text{ if } k = i\\
				\kappa_{\ell} \cdot \omega_{k, \ell}^{-1} &\text{ if } k=j\\
				\kappa_\ell^{c_{k, \ell}c_{j, \ell}^{-1}}\cdot \omega_{k, \ell}^{-1} &\text{ otherwise.}\\
			\end{array}
			\right.\\
			B_{k, \ell} &= \kappa_\ell^{d_{k, \ell}c_{j, \ell}^{-1}} \cdot \xi_{k, \ell}^{-1}, \text{ for all } k. 
		\end{align*}

		\item If $d_{j, \ell} \neq 0$, for some $j$ (possibly $j = i$), let 
		\begin{align*}
		 	A_{k, \ell} &= \left\{
			\begin{array}{ll}
				\omega_{k, \ell}^{-1} &\text{ if } k = i\\
				\kappa_\ell^{c_{k, \ell}d_{j, \ell}^{-1}}\cdot \omega_{k, \ell}^{-1} &\text{ otherwise.}\\
			\end{array}
			\right.\\
			B_{k, \ell} &= \left\{
			\begin{array}{ll}
				\kappa_\ell \cdot \xi_{k, \ell}^{-1} &\text{ if } k = j\\
				\kappa_\ell^{d_{k, \ell}d_{j, \ell}^{-1}}\cdot \xi_{k, \ell}^{-1} &\text{ otherwise.}\\
			\end{array}
			\right.
		 \end{align*} 
	\end{enumerate}
	\item If $b_{i, \ell} \neq 0$, for some $i$, let 
	\begin{align*}
		c_{j, \ell} &= x_{j, \ell} b_{i, \ell} - y_{i, \ell} a_{j, \ell}, \text{ for all} \\
		d_{j, \ell} &= \left\{
		\begin{array}{ll}
			y_{j, \ell} b_{i,\ell} - y_{i, \ell}b_{j, \ell}, &\text{ for } j \neq i \\
			0, &\text{ if } j = i.
		\end{array}
		\right.\\
	\end{align*}
	Since $(a_{1, \ell}, \dots, a_{n, \ell}, b_{1, \ell}, \dots, b_{n, \ell})$ and $(x_{1, \ell}, \dots, x_{n, \ell}, y_{1, \ell}, \dots, y_{n, \ell})$ are not multiples of each other, at least one of $c_{j, \ell}$ and $d_{j, \ell}$ is nonzero. 
	\begin{enumerate}
		\item If $d_{j, \ell} \neq 0$, for some $j \neq i$, let 
		\begin{align*}
			A_{k, \ell} &= \kappa_\ell^{c_{k, \ell}d_{j, \ell}^{-1}} \cdot \omega_{k, \ell}^{-1}, \text{ for all } k.\\
			B_{k, \ell} &= \left\{
			\begin{array}{ll}
				\xi_{k, \ell}^{-1} &\text{ if } k = i\\
				\kappa_{\ell} \cdot \xi_{k, \ell}^{-1} &\text{ if } k=j\\
				\kappa_\ell^{d_{k, \ell}d_{j, \ell}^{-1}}\cdot \xi_{k, \ell}^{-1} &\text{ otherwise.}\\
			\end{array}
			\right.\\
		\end{align*}

		\item If $c_{j, \ell} \neq 0$, for some $j$ (possibly $j = i$), let 
		\begin{align*}
		 	A_{k, \ell} &= \left\{
			\begin{array}{ll}
				\kappa_\ell\cdot \omega_{k, \ell}^{-1} &\text{ if } k = j\\
				\kappa_\ell^{c_{k, \ell}c_{j, \ell}^{-1}}\cdot \omega_{k, \ell}^{-1} &\text{ otherwise.}\\
			\end{array}
			\right.\\
			B_{k, \ell} &= \left\{
			\begin{array}{ll}
				\xi_{k, \ell}^{-1} &\text{ if } k = i\\
				\kappa_\ell^{d_{k, \ell}c_{j, \ell}^{-1}}\cdot \xi_{k, \ell}^{-1} &\text{ otherwise.}\\
			\end{array}
			\right.
		 \end{align*} 
	\end{enumerate}
\end{enumerate}

Let $\displaystyle R = \prod_{i, \ell} g_i^{-1}(\lambda_\ell)$. We would like to construct a prime $\alpha$ of $\mathcal{O}_K$ with $\alpha \equiv g_i^{-1}(A_{i, \ell}) \pmod{g_i^{-1}(\lambda_\ell)}$, for all $i$ and $\ell$. Since the ideals $g_i^{-1}(\lambda_\ell) \mathcal{O}_K$ are pairwise coprime, the Chinese remainder theorem tells us that constructing such an $\alpha$ is equivalent to constructing an $\alpha$ that satisfies a specific congruence modulo $R\mathcal{O}_K$, say $\alpha \equiv r_1 \pmod{R}$, compatible with $\alpha \equiv g_i^{-1}(A_{i, \ell}) \pmod{g_i^{-1}(\lambda_\ell)}$. Similarly, to construct a prime $\beta$ of $\mathcal{O}_K$ with $\beta \equiv g_i^{-1}(B_{i, \ell}) \pmod{g_i^{-1}(\lambda_\ell)}$, it is enough to construct a prime $\beta$ with a specific compatible congruence modulo $R\mathcal{O}_K$, say $\beta \equiv r_2 \pmod{R}$. Use the existence theorem of class field theory to construct an abelian extension $\tilde{M}/K$ whose Galois group is in a natural correspondence with the ray classes modulo $R\mathcal{O}_K$. Use Lemma \ref{lemma3} with $M = K$ again to find two primes $\alpha$ and $\beta$ of $\mathcal{O}_K$ that split completely in $L_p(K)/K$, prime to $p$, such that 
\[
\alpha \mod{U(K)^p} = -P_V (\nu_1) + \sum_{W \neq V} P_W \left(\sum_{j=1}^d (1+\sigma_j)q_{Td+j}\right)
\]
and
\[
\beta \mod{U(K)^p} = -P_V (\nu_2) + \sum_{W \neq V} P_W \left(\sum_{j=1}^d (1+\sigma_j)q_{(T+1)d+j} \right)
\]
in $R(K)$, and
\begin{align*}
	\alpha &\equiv r_1 \pmod{R}\\
	\beta &\equiv r_2 \pmod{R}. 
\end{align*}

Observe that while taking projections $P_V(\nu_1)$ and $P_V(\nu_2)$ modifies the exponents of the primes $g(\lambda_\ell)$ in $\nu_1$ and $\nu_2$, those exponents still appear as the exponents of some other primes. In other words, if $(a_1, \dots, a_n, b_1, \dots, b_n)$ generates a $\Z/p\Z$ subgroup of $(\Z/p\Z)^{2n}$, then there is a prime $g_i(\lambda_\ell)$ such that the the exponents of $g_1^{-1}(g_i(\lambda_\ell)), g_2^{-1}(g_i(\lambda_\ell)), \dots, g_n^{-1}(g_i(\lambda_\ell))$ in $\nu_1$ are $a_1, \dots, a_n$ and the exponents of $g_1^{-1}(g_i(\lambda_\ell)), g_2^{-1}(g_i(\lambda_\ell)), \dots, g_n^{-1}(g_i(\lambda_\ell))$ in $\nu_2$ are $b_1, \dots, b_n$.

Note that $K^\times/K^{\times p}$ is an $\F_p[\Phi]$-module, so we can consider the projections to the $V$-eigenspace of $\nu_1 \alpha$ and $\nu_2\beta$. Construct $K_1$ to be the Galois closure of $K(\sqrt[p]{P_V (\nu_1 \alpha)})$ over F, and $K_2$ to be the Galois closure of $K(\sqrt[p]{P_V (\nu_2 \beta)})$ over F. Then $\Gal(K_1/F) \cong \Gal(K_2/F) \cong V \rtimes \Phi$. Moreover, since $P_V(\nu_1 \alpha) = 0$ in $R(K)$, every prime lying over $p$ in $K$ splits completely in $K_1/K$. The same holds true for $K_2/K$. Consider their compositum $\tilde{K} = K_1.K_2$ and let $\tilde{L} = \tilde{K}.L_p(K)$. We claim that:
\begin{itemize}
	\item The extension $\tilde{L}/L_p(K)$ is unramified at $p$.
	\item Recall that $g(\alpha)$, $g(\beta)$, $g(\lambda_i)$ split completely in $L_p(K)/K$ by construction, for all $g \in \Phi$ and $1 \leq i \leq T$. Let $\tilde{S}$ be the set of primes of $L_p(K)$ lying above these primes. Then $\tilde{L}/L_p(K)$ is the maximal elementary abelian $p$-extension of $L_p(K)$ which is unramified outside $\tilde{S}$.
	\item All the $(\Z/p\Z)^2$-subgroups of $\Gal(\tilde{L}/L_p(K))$ appear as decomposition groups of some prime in $L_p(K)$ lying over $g(\lambda_i)$, for $g \in \Phi$ and $1 \leq i \leq T$. 
\end{itemize}

Consider $P_V (\nu_1 \alpha)$ in $R(K)$. By construction, $P_V(\nu_1\alpha) = 0$ in $R(K)$, so it must remain trivial in $R^\prime(K)$. Similarly, $P_V (\nu_2 \beta) = 0$ in $R^\prime(K)$. It implies that $K_1/K$ and $K_2/K$ are unramified at $p$, and thus $\tilde{K}/K$ is unramified at $p$. Combine this with the fact that $L_p(K)/K$ is unramified everywhere to conclude that $\tilde{L} = \tilde{K}.L_p(K)/L_p(K)$ is unramified at $p$.

To show that $\tilde{L}$ is the maximal elementary abelian $p$-extension of $L_p(K)$ unramified outside $\tilde{S}$, it is enough to show that $\tilde{K}$ is the maximal elementary abelian $p$-extension of $K$ unramified outside $S = \{ g(\alpha), g(\beta), g(\lambda_i) \mid g \in \Phi, 1 \leq i \leq T \}$. This is true because $\tilde{S}$ represents the set of primes of $L_p(K)$ lying over the primes in $S$, and all the primes in $S$ split completely in $L_p(K)/K$.  

To this end, consider an elementary abelian $p$-extension of $K$ unramified outside $S$, call it $K(\sqrt[p]{\gamma})/K$. Then
\[
\gamma \equiv \eta \cdot \prod_{i=1}^T \prod_{g \in \Phi} g(\lambda_i)^{c_{g,i}} \prod_{g \in \Phi} g(\alpha)^{a_g} \prod_{g \in \Phi} g(\beta)^{b_g} \mod{K^{\times p}},
\]
for some $\eta \in \mathcal{O}_K^\times$ and $a_g, b_g, c_{g, i} \in \Z$, for $1 \leq i \leq T$ and $g \in \Phi$. Since $K(\sqrt[p]{\gamma})/K$ is unramified at $p$, it follows that $\gamma = 0$ in $R^\prime(K)$, so $\gamma \in N^G$. Thus $P_W(\gamma) = 0 \in R(K)$, for all $W$, which in turn means that $P_W(\gamma) = 0 \in R^\prime(K)$. From construction, $g(\lambda_i), g(\alpha), g(\beta) \in Q^G \subset R^\prime(K)$. Moreover, since $\eta \in \mathcal{O}_K^\times \otimes \F_p$, and $\mathcal{O}_K^\times \otimes \F_p$ intersects $Q^G$ and $N^G$ trivially, it follows that $\eta = 0$ in $R^\prime(K)$, so $\eta \in \mathcal{O}_K^{\times p}$, since $\eta$ is a global unit. 

On one hand, consider $P_W(\gamma)$, for $W \neq V$:
\begin{align*}
	0 = P_W(\gamma) &= \sum_{i=1}^T\sum_{g\in \Phi} \left( \sum_{h\in  \Phi} c_{gh^{-1}, i} \cdot n_{h, W} g(\lambda_i)\right) \\ &+ \sum_{g\in \Phi} \left( \sum_{h \in \Phi} a_{gh^{-1}} \cdot n_{h, W}  g(\alpha) \right) \\ &+ \sum_{g\in \Phi} \left( \sum_{h\in\Phi} b_{gh^{-1}} \cdot n_{h, W} g(\beta)\right) 
\end{align*}
By construction, $\lambda_i, \alpha, \beta$ and their conjugates are all linearly independent in $Q^G$, so it follows that their coefficients have to be $0$:

\begin{align*}
	\sum_{h \in \Phi} n_{h, W} \cdot c_{gh^{-1}, i} &= 0, \\
	\sum_{h \in \Phi} n_{h, W} \cdot a_{gh^{-1}} &= 0,\\
	\sum_{h \in \Phi} n_{h, W}\cdot b_{gh^{-1}} &= 0,
\end{align*}
for all $1 \leq i \leq T$ and $g \in \Phi$. 

On the other hand, recall that $P_V(\nu_1\alpha) = 0$ and $P_V(\nu_2 \beta) =0$ in $R(K)$.  Consider $P_V(\gamma)$: 
\begin{align*}
	0 = P_V(\gamma) = \sum_{i=1}^T \sum_{g \in \Phi} \left( \sum_{h\in \Phi} n_{h, V}\left(c_{gh^{-1}, i} - \sum_{\tau \in \Phi} (b_{g\tau^{-1}}y_{\tau h^{-1}, i} + a_{g\tau^{-1}}x_{\tau h^{-1}, i})\right)\right) g(\lambda_i).
\end{align*}
Using the same argument as above, we observe that 
\[
\sum_{h\in \Phi} n_{h, V}(c_{gh^{-1}, i} - \sum_{\tau\in \Phi} (b_{g\tau^{-1}}y_{\tau h^{-1}, i} + a_{g\tau^{-1}}x_{\tau h^{-1}, i})) = 0,
\]
for all $1 \leq i \leq T$ and all $g\in \Phi$. 

Finally, putting these things together, we obtain that
\[
\gamma \equiv \prod_{g \in \Phi} g(P_V (\nu_1 \alpha))^{a_g} \prod_{g\in \Phi} g(P_V (\nu_2 \beta))^{b_g} \mod{K^{\times p}},
\]
meaning that $K(\sqrt[p]{\gamma}) \subset \tilde{K}$, which proves the maximality of $\tilde{K}$, and concludes the proof for the second claim.  

Just as above, we observe that it is enough to prove the third claim for $\tilde{K}/K$, the statement for $\tilde{L}/L_p(K)$ following from it, since $g(\lambda_i)$ splits completely in $L_p(K)/K$, for all $g\in \Phi$ and $1 \leq i \leq T$, and $\Gal(\tilde{L}/L_p(K)) \cong \Gal(\tilde{K}/K)$. The following short lemma proves this statement for $\tilde{K}/K$. 
\begin{lemma}
	All the $(\Z/p\Z)^2$-subgroups of $\Gal(\tilde{K}/K)$ appear as decomposition groups of some $g_i(\lambda_\ell)$, for $1 \leq \ell \leq T$ and $i = i(\ell)$. 
\end{lemma}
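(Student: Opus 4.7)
The plan is to use Kummer theory to identify $\Gal(\tilde K/K)$ with a copy of $V\oplus V\cong(\Z/p\Z)^{2n}$ and then compute the decomposition group at each prime $g_i(\lambda_\ell)$ explicitly. Since $P_V(\nu_1\alpha)$ and $P_V(\nu_2\beta)$ lie in the $V$-isotypic component of $K^\times/K^{\times p}$, each generates an $\F_p[\Phi]$-submodule isomorphic to $V$ under the $\Phi$-action, and together their $\Phi$-translates span a Kummer module $M\subset K^\times/K^{\times p}$ with $M\cong V\oplus V$. By Kummer duality, $\Gal(\tilde K/K)\cong\mathrm{Hom}(M,\mu_p)$, which confirms that the ambient group has the expected order $p^{2n}$.

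For a prime $\mathfrak{q}=g_i(\lambda_\ell)$, the decomposition subgroup $D_\mathfrak{q}\subset\Gal(\tilde K/K)$ is Pontryagin dual to the image of $M$ under the natural map $M\to K_\mathfrak{q}^\times/K_\mathfrak{q}^{\times p}$. Because $N(\lambda_\ell)\equiv 1\pmod p$, we have $\mu_p\subset K_\mathfrak{q}$ and $K_\mathfrak{q}^\times/K_\mathfrak{q}^{\times p}\cong(\Z/p\Z)^2$, with coordinates given by the $\mathfrak{q}$-adic valuation and the residue class modulo $p$-th powers. Thus $D_\mathfrak{q}$ has $\F_p$-dimension equal to the rank of the image of $M$ in $(\Z/p\Z)^2$, and it suffices to show that this image is $2$-dimensional and that, under the duality $\Gal(\tilde K/K)\cong V\oplus V$, the resulting plane equals the subspace spanned by $(a_{1,\ell},\ldots,b_{n,\ell})$ and $(x_{1,\ell},\ldots,y_{n,\ell})$.

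I would evaluate the image of $M$ in the two local coordinates separately. For the valuation coordinate, using that $\alpha$ does not divide any $g(\lambda_\ell)$ and the construction of $\nu_1$ (whose exponent at $g_j^{-1}(\lambda_\ell)$ is $a_{j,\ell}$), the vector of valuations at $g_i(\lambda_\ell)$ of the $\Phi$-translates $g_j(P_V(\nu_1\alpha))$ is, up to a nonzero scalar in $\F_p$, equal to $(a_{1,\ell},\ldots,a_{n,\ell})$ modulo the annihilator of $V$; an analogous computation for $P_V(\nu_2\beta)$ yields $(b_{1,\ell},\ldots,b_{n,\ell})$. For the residue coordinate, the Chinese Remainder construction of $\alpha$ and $\beta$ was precisely designed so that the residues $\alpha\bmod g_i^{-1}(\lambda_\ell)$ and $\beta\bmod g_i^{-1}(\lambda_\ell)$ cancel the units $\omega_{k,\ell},\xi_{k,\ell}$ coming from the factorizations $g_i(\nu_1)=\lambda_\ell^{a_{i,\ell}}\omega_{i,\ell}$ and $g_i(\nu_2)=\lambda_\ell^{b_{i,\ell}}\xi_{i,\ell}$, and leave behind specific powers of the non-$p$-power residue $\overline{r_\ell}$ that, after applying the projector $P_V$, reproduce the Frobenius direction $(x_{i,\ell},y_{i,\ell})$.

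Since the pair $\bigl((a_{1,\ell},\ldots,b_{n,\ell}),(x_{1,\ell},\ldots,y_{n,\ell})\bigr)$ is $\F_p$-linearly independent by construction, the image of $M$ in $K_{g_i(\lambda_\ell)}^\times/K_{g_i(\lambda_\ell)}^{\times p}$ is $2$-dimensional, so $D_{g_i(\lambda_\ell)}\cong(\Z/p\Z)^2$; dually, the corresponding plane in $V\oplus V$ is the one spanned by these two vectors. As $\ell$ ranges over $1,\ldots,T$, the labeled pairs enumerate all $(\Z/p\Z)^2$-subspaces of $V\oplus V$, which gives the desired conclusion. The main obstacle is the residue computation: verifying, in each of the four subcases used to define the $A_{k,\ell}$ and $B_{k,\ell}$, that the $\kappa_\ell$-exponents cancel correctly against the units $\omega_{k,\ell},\xi_{k,\ell}$ and interact with the projector $P_V$ to produce precisely the target vector $(x_{i,\ell},y_{i,\ell})$, which is delicate but essentially a bookkeeping calculation guaranteed by the case-splitting in the definitions of $c_{j,\ell}$ and $d_{j,\ell}$.
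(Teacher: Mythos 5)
Your proposal is correct and follows essentially the same route as the paper's proof: for each $(\Z/p\Z)^2$-subgroup $N$ generated by $(a_1,\ldots,b_n)$ and $(x_1,\ldots,y_n)$, one locates the prime $g_i(\lambda_\ell)$ whose exponent pattern in $\nu_1,\nu_2$ realizes the inertia direction $(a,b)$, and uses the CRT congruences on $\alpha,\beta$ (built from $A_{k,\ell}, B_{k,\ell}$ and the non-$p$-power residue $\kappa_\ell$) to realize the Frobenius direction $(x,y)$, whence $D_{g_i(\lambda_\ell)}=N$. Your version is a bit more explicit about the local Kummer duality (identifying $K_\mathfrak{q}^\times/K_\mathfrak{q}^{\times p}\cong(\Z/p\Z)^2$ via valuation and residue), and you honestly flag the residue bookkeeping across the four subcases as the remaining work to be carried out --- the paper is equally terse at that exact point, invoking the congruences and the mixing-by-$P_V$ observation without writing out the computation.
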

\begin{proof}
	Take a $(\Z/p\Z)^2$-subgroup $N$ of $\Gal(\tilde{K}/K)$; note that there are $T = \frac{(p^{2n}-1)(p^{2n}-p)}{(p^2-1)(p^2-p)}$ of them. Assume that $N$ is generated by $(a_1, \dots, a_n, b_1, \dots, b_n)$ and $(x_1, \dots, x_n, y_1, \dots, y_n)$ with $(a_1, \dots, a_n, b_1, \dots, b_n) \neq 0$, $(x_1, \dots, x_n, y_1, \dots, y_n) \neq 0$, and $(a_1, \dots, a_n, b_1, \dots, b_n)$ and $(x_1, \dots, x_n, y_1, \dots, y_n)$ are not multiples of each other. From the above discussion, we know that there is a prime $\lambda = g_i(\lambda_\ell)$ such that the exponents of $g_1^{-1}(\lambda), g_2^{-1}(\lambda), \dots, g_n^{-1}(\lambda)$ are $a_1, \dots, a_n$ in $\nu_1$, and $b_1, \dots, b_n$ in $\nu_2$, respectively. This implies that the inertia subgroup corresponding to $\lambda$ in $\Gal(\tilde{K}/K)$ is generated by $(a_1, \dots, a_n, b_1, \dots, b_n)$. Moreover, the prime $\lambda$ satisfies
	\begin{align*}
		\alpha &\equiv g_j^{-1}(A_{j, \ell}) \pmod{g_j^{-1}(\lambda)}, \\
		\beta &\equiv g_j^{-1}(B_{j, \ell}) \pmod{g_j^{-1}(\lambda)},
	\end{align*}
	for all $1 \leq j \leq n$, which implies that the fixed field of inertia is nontrivial, but the fixed field of $N$ is trivial. Thus, the decomposition group of $\lambda$ is isomorphic to $N$. This concludes the proof, since $N$ was chosen arbitrary. 
\end{proof}

We claim that the extension $\tilde{L}/L_p(K)$ satisfies the conditions of Lemma \ref{lemma1}, with $\tilde{S}$ instead of S. We know that $L_p(L_p(K)) = L_p(K)$ and we have just proved that $\tilde{L}$ is the maximal elementary abelian $p$-extension of $L_p(K)$ unramified outside $\tilde{S}$, so the only thing left to prove is that the following map is surjective
\begin{align*}
	\bigoplus_v H_2(D_v, \Z) \to H_2(\Gal(\tilde{L}/L_p(K)), \Z),
\end{align*}
where the sum is over all the primes of $L_p(K)$. The key fact is that for a finite abelian group $G$, the homology group $H_2(G,\Z)$ is isomorphic to the second exterior power of $G$, $\displaystyle\bigwedge^2 (G)$ (Lemma 5 in \cite{MR0466073}). Recall that $\Gal(\tilde{L}/L_p(K)) \cong \Gal(\tilde{K}/K) \cong (\Z/p\Z)^{2n}$; assume that $\Gal(\tilde{L}/L_p(K))$ is generated by $\{\tau_1, \tau_2, \dots, \tau_{2n}\}$. Consider the $(\Z/p\Z)^2$-subgroups of $\Gal(\tilde{L}/L_p(K))$ generated by pairs of two elements in $\{\tau_1, \dots, \tau_{2n}\}$. There are $n(2n-1)$ of them; call them $A_1, A_2, \dots, A_{n(2n-1)}$. In particular, $A_{k+j} = \langle \tau_i, \tau_{i+j} \rangle$, where $i$ ranges from $1$ to $2n-1$ and $k = 2n(i-1) - \frac{(i-1)i}{2}$, $1 \leq j \leq 2n-i$. 

Note that $T = \frac{(p^{2n}-1)(p^{2n}-p)}{(p^2-1)(p^2-p)} \geq n(2n-1)$, for all primes $p$. From the fact that the decomposition subgroups $D_v$ exhaust the $(\Z/p\Z)^2$-subgroups of $\Gal(\tilde{L}/L_p(K))$, as $v$ ranges over all the primes in $\tilde{S}$, it follows that there are primes $v_i \in \tilde{S}$ such that $D_{v_i} \cong A_i$, for all $1 \leq i \leq n (2n-1)$. Note that these primes are primes above $g(\lambda_j), g(\alpha), g(\beta)$. Consider the following intersections:
\begin{align*}
	B_1 &= D_{v_1} \cap D_{v_2} \cap \dots \cap D_{v_{2n-1}} \cong \langle \tau_1 \rangle \cong \Z/p\Z \\
	B_2 &= D_{v_1} \cap D_{v_{2n}} \cap \dots \cap D_{v_{4n-3}} \cong \langle \tau_2 \rangle \cong \Z/p\Z  \\
	&\dots \\
	B_{2n} &=D_{v_{2n-1}} \cap D_{v_{4n-3}} \cap \dots \cap D_{v_{n(2n-1)}} \cong \langle \tau_{2n} \rangle \cong \Z/p\Z.
\end{align*}

The groups $B_i$ span the group $\Gal(\tilde{L}/L_p(K))$, so there exists a basis $\{ x_1, \dots x_{2n} \}$ of $\Gal(\tilde{L}/L_p(K))$ such that $x_i \in B_i$. Now,
\begin{align*}
	x_1, x_2 \in D_{v_1} &\Rightarrow x_1 \wedge x_2 \in \bigwedge^2 D_{v_1} \\
	x_1, x_3 \in D_{v_2} &\Rightarrow x_1 \wedge x_3 \in \bigwedge^2 D_{v_2} \\
	&\dots \\
	x_{2n-1}, x_{2n} \in D_{v_{n(2n-1)}} &\Rightarrow x_{2n-1} \wedge x_{2n} \in \bigwedge^2 D_{v_{n(2n-1)}},
\end{align*}
which implies that $\langle x_i \wedge x_j \mid i<j \rangle \subset \bigoplus \displaystyle \bigwedge^2 D_{v_i}$, where the sum is over all $v_i$ with $D_{v_i} \cong A_i$, for $1 \leq i \leq n(2n-1)$. On the other hand, $\displaystyle \langle x_i \wedge x_j \mid i<j \rangle$ spans $\displaystyle \bigwedge^2 \Gal(\tilde{L}/L_p(K))$, which implies that $\displaystyle \bigwedge^2 \Gal(\tilde{L}/L_p(K)) \subset \bigoplus \bigwedge^2 D_{v_i}  \subset \bigoplus \bigwedge^2 D_v$, so we must have equality. Thus, the map in Lemma \ref{lemma1} is surjective, proving that $L_p(\tilde{L}) = \tilde{L}$. 

Let $L_1 = K_1.L_p(K)$ and $L_2 = K_2.L_p(K)$. Then $\tilde{L}=L_1.L_2$. Note that $\Gal(L_1/F) \cong \Gal(L_2/F) \cong V \rtimes \Gamma \cong \Gamma^\prime$, by the discussion preceding Proposition \ref{embedding}. Let $K^\prime$ be the Galois closure of $K(\sqrt[p]{P_V (\nu_1 \alpha) P_V (\nu_2 \beta)^{-1}})$ over $F$. We would like to show that there exists $F^\prime/F$ that satisfies the conditions of Proposition \ref{prop2}. Since $\Gal(K^\prime/K) \cong V$ and $\Gal(K^\prime/F) \cong V \rtimes \Phi$ by construction, we have the following exact sequence
\[
1 \to \Gal(K^\prime/K) \to \Gal(K^\prime/F) \to \Gal(K/F) \to 1. 
\]
Recall that $V$ is a $p$-group and $\Phi$ is a group of order prime to $p$, so this sequence splits, and we can view $\Gal(K/F) = \Phi$ as a subgroup of $\Gal(K^\prime/F)$. Let $F^\prime = K^{\prime \Phi}$. The extension $K^\prime/F^\prime$ is Galois and has Galois group $\Phi$. Moreover, $K \subset K^\prime$ and $F \subset F^\prime$. By construction, $F^\prime/F$ and $L_1/F$ are linearly disjoint, and $L_1.F^\prime = L_1.K^\prime = \tilde{L}$, so $\Gal(\tilde{L}/F^\prime) \cong \Gal(L_1/F) \cong \Gamma^\prime$. 
We claim that $L_p(K^\prime) = \tilde{L}$. Since $\tilde{L}$ is an unramified $p$-extension of $L_p(K).K^\prime$ and $L_p(K).K^\prime$ is an unramified $p$-extension of $K^\prime$, it follows that $\tilde{L} \subset L_p(K^\prime)$. Combining this with the fact that $L_p(\tilde{L}) = \tilde{L}$, we conclude that $\tilde{L} = L_p(K^\prime)$. Moreover, every prime of $F^\prime$ lying over $p$ splits completely in $\tilde{L}$. To finish the proof in the case of a split extension, we have to prove that $L_p(K^\prime)/F^\prime$ satisfies property \textbf{P}. Since $L_p(K^\prime)/K^\prime$ is everywhere unramified, we only have to prove that the extension $K^\prime/F^\prime$ satisfies property \textbf{P}. Since $\Gal(K^\prime/F^\prime) \cong \Gal(K/F) \cong \Phi$ has order prime to $p$, and $F^\prime/F$ is a $p$-extension, this argument is similar to the one at the end of the proof of Lemma \ref{lemmma9}, and will be omitted.  \\

If the extension
\[
1 \to V \to \Gamma^\prime \to \Gamma \to 1
\]
is not split, we cannot work over $K$ anymore. However, the proof is similar. We begin by proving the following lemma, which is a variation of Lemma 6 in \cite{MR2772089}. In fact, we can recover Ozaki's lemma from the following result in the case of $\Phi = 1$, $V = \Z/p\Z$ and $\mu_p \subset K$. The proof is similar to the proof of Lemma 4 in the first version of Ozaki's paper, the main differences coming from the $\F_p$-dimension of $V$ and the action of $\Phi$ on $V$. 
\begin{lemma}\label{lemma4}
	For any group extension 
	\[
	1 \to V \to \Gamma^\prime \to \Gamma \to 1,
	\]
	there exists a finite extension $F^\prime/F$ such that if $K^\prime = K.F^\prime$, then 
	\begin{enumerate}
		\item $L_p(K^\prime) = L_p(K).K^\prime$ and $L_p(K^\prime) = L_p(K).F^\prime$; hence $\Gal(L_p(K^\prime)/F^\prime) \cong \Gal(L_p(K)/F)$ and every prime of $F^\prime$ lying over $p$ splits completely in $L_p(K^\prime)$.
		\item There exist global units $\varepsilon_1, \varepsilon_2, \dots, \varepsilon_n \in \mathcal{O}_{L_p(K^\prime)}^\times$ ($n = \dim_{\F_p} V$) such that the field extension $L_p(K^\prime)(\sqrt[p]{\varepsilon_1}, \dots, \sqrt[p]{\varepsilon_n})/F^\prime$ is Galois with Galois group isomorphic to $\Gamma^\prime$.  
	\end{enumerate}
\end{lemma}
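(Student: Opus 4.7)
Start by applying Proposition \ref{embedding} to obtain $\alpha_1,\ldots,\alpha_n \in L_p(K)^\times/L_p(K)^{\times p}$ such that $M := L_p(K)(\sqrt[p]{\alpha_1},\ldots,\sqrt[p]{\alpha_n})/F$ is Galois with group $\Gamma'$. Property \textbf{P} makes each local embedding problem at a prime above $p$ solvable (Theorem \ref{satz5.1}), and since the $H^2$ local-global map is injective, one may choose the $\alpha_i$ so that $L_p(K)(\sqrt[p]{\alpha_i})/L_p(K)$ is unramified at every prime above $p$. This means $v_\mathfrak{Q}(\alpha_i)\equiv 0\pmod p$ for every $\mathfrak{Q}\mid p$, and after multiplying the $\alpha_i$ by $p$-th powers in $L_p(K)^\times$ one may arrange that $v_\mathfrak{Q}(\alpha_i)=0$ at all such $\mathfrak{Q}$. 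Let $S$ be the finite $\Gamma$-stable set of primes of $L_p(K)$, none dividing $p$, at which some $\alpha_i$ has nonzero valuation.

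Next, build a finite $p$-extension $F'/F$ by iterating Proposition \ref{prop1}, arranging that $L_p(K') = L_p(K)\cdot F' = L_p(K)\cdot K'$ (where $K' = K\cdot F'$), that property \textbf{P} continues to hold, and that every prime of $F$ lying below a prime of $S$ acquires ramification index divisible by $p$ in $F'/F$. After this base change, each $\mathfrak{Q}\in S$ becomes a $p$-th power of a prime in $L_p(K')$, so the divisor of $\alpha_i$ in $L_p(K')$ is a $p$-th power of a fractional ideal $\mathfrak{J}_i$. Since $L_p(K')$ is its own maximal unramified $p$-extension, the $p$-part of its class group is trivial, forcing $\mathfrak{J}_i=(\beta_i)$ to be principal. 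Setting $\varepsilon_i := \alpha_i/\beta_i^p \in \mathcal{O}_{L_p(K')}^\times$ produces the required global units, and since $L_p(K')(\sqrt[p]{\varepsilon_i}) = L_p(K')(\sqrt[p]{\alpha_i})$ and $F'/F$ is linearly disjoint from $M/F$ (a by-product of the Proposition \ref{prop1}-style construction), the composite $L_p(K')(\sqrt[p]{\varepsilon_1},\ldots,\sqrt[p]{\varepsilon_n})/F'$ is Galois over $F'$ with Galois group $\Gamma'$.

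The main obstacle is the engineering of $F'$ in the middle step. Proposition \ref{prop1} selects its auxiliary prime $\Lambda$ of $L_p(K)$ through the Chebotarev argument of Lemma \ref{lemma3}, so one must verify that its Frobenius and class-in-$R(L_p(K))$ constraints can be met while also forcing $\Lambda$ to lie above a prescribed prime of $F$. This probably requires preliminary enlargements of $F$ (which Proposition \ref{prop1} permits) at each iteration, so that enough flexibility remains in the Chebotarev argument to hit the intended fiber. A secondary subtlety is verifying the linear disjointness of $F'$ from the Kummer extension $M/F$, which requires tracking how the new degree-$p$ extensions produced at each iteration intersect $M$, and may force us to impose further conditions on $\Lambda$ (via an auxiliary $\tilde{M}$ in Lemma \ref{lemma3}).
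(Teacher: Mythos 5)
Your outline diverges substantially from the paper's argument, and the middle step has a genuine gap that you have correctly identified as ``the main obstacle'' but whose proposed fix cannot work. You want to iterate Proposition~\ref{prop1} so that every prime of $F$ below the support $S$ of the $\alpha_i$ acquires ramification index divisible by $p$ in $F'/F$. Each application of Proposition~\ref{prop1} produces an extension $F(\sqrt[p]{\eta})/F$ that is ramified at the single prime $\eta\mathcal{O}_F$, where $\eta = N_{L_p(K)/F}(\Lambda)$ and $\Lambda$ is chosen via Chebotarev through Lemma~\ref{lemma3}. Chebotarev prescribes Frobenius conjugacy classes (equivalently splitting data), but it does not let you force $\Lambda$ to lie above a \emph{prescribed} prime $\mathfrak{p}$ of $F$: the fiber over a single $\mathfrak{p}$ is a finite set, not a positive-density set, so the density argument in Lemma~\ref{lemma3} cannot land there. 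Enlarging $F$ beforehand does not change this --- it only enlarges the base, and the resulting Chebotarev primes are still essentially random over $F$. Thus the specific ramification locus you need cannot be engineered with the tools at hand, and the reduction ``divisor of $\alpha_i$ becomes a $p$-th power'' never happens.

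A secondary, smaller issue: you assert that because the local embedding problems over $p$ are solvable and the local-global $H^2$ map is injective, one may choose the $\alpha_i$ so that $L_p(K)(\sqrt[p]{\alpha_i})/L_p(K)$ is unramified at all primes above $p$. Injectivity of the $H^2$ map governs \emph{existence} of a solution; making a global solution agree with prescribed local solutions is a $H^1$-torsor / Grunwald--Wang statement that requires its own argument (surjectivity onto the relevant quotient of local $H^1$'s). This is not automatic and is not what Theorem~\ref{satz4.7} gives you.

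The paper avoids both problems entirely by working with ideal classes rather than ramification loci. It first adjusts the Kummer generators so that $\alpha_i'\mathcal{O}_{L_p(K)}$ is an ideal of $\mathcal{O}_K$ (using that the class number of $L_p(K)$ is prime to $p$ to absorb the $L_p(K)$-part). It then passes to a base change $F'/F$ of degree $p^e$, where $p^e$ is the $p$-part of the order of the ideal class $[\alpha_i']_K$, and uses the norm map $\Cl_{K'}\to\Cl_K$ together with the fact that its kernel has order prime to $p$ (from $\Gal(L_p(K')/K')^{\mathrm{ab}}\cong G^{\mathrm{ab}}$) to conclude that $j([\mathfrak{a}_i'])$ has order $m_i$ prime to $p$ in $\Cl_{K'}$. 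Raising to the power $m=\mathrm{lcm}(m_i)$ then makes $\mathfrak{a}_i'^m$ principal in $\mathcal{O}_{K'}$, so $\alpha_i'^m$ factors as (element of $K'^\times$) times (global unit of $L_p(K')$). The Galois group computation is then completed by projecting onto the $\tilde V$- and $V$-isotypic components, not by a linear-disjointness assertion. None of this requires controlling where the auxiliary Chebotarev primes sit over $F$; it is a purely class-group mechanism, which is why it succeeds where your ramification-engineering step cannot.
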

\begin{proof}
	By Proposition \ref{embedding}, there exists a Galois extension $L/F$ containing $L_p(K)$ such that $\Gal(L/F) \cong \Gamma^\prime$. 
	Let $\alpha_1, \dots, \alpha_n$ be the Kummer generators of $L/L_p(K)$, so $L = L_p(K)(\sqrt[p]{\alpha_1}, \dots, \sqrt[p]{\alpha_n})$. Since $K \subset L_p(K) \subset L$ and $K/F$ is Galois, the extension $L/K$ must be Galois, so $(\alpha_i \mod{L_p(K)^{\times p}}) \in (L_p(K)^\times/L_p(K)^{\times p})^G$, where $G = \Gal(L_p(K)/K)$. Thus, $\alpha_i \mathcal{O}_{L_p(K)} = \mathfrak{A}_i^p \mathfrak{a}_i$, for $\mathfrak{A}_i$ an ideal of $\mathcal{O}_{L_p(K)}$ and $\mathfrak{a}_i$ an ideal of $\mathcal{O}_K$. Let $h$ be the class number of $L_p(K)$; so $(h,p) = 1$. Then, $\mathfrak{A}_i^h = A_i \mathcal{O}_{L_p(K)}$, for some $A_i \in L_p(K)$, which implies that $\alpha_i^h \mathcal{O}_{L_p(K)} = A_i^p \mathfrak{a}_i^h$. Let $\mathfrak{a}_i^{\prime} = \mathfrak{a}_i^h$ and $\alpha_i^{\prime} = \alpha_i^h A_i^{-p}$. Then $\mathfrak{a}_i^\prime = \alpha_i^\prime \mathcal{O}_{L_p(K)}$ is an ideal of $\mathcal{O}_K$, and $L = L_p(K)(\sqrt[p]{\alpha_1}, \dots, \sqrt[p]{\alpha_n}) = L_p(K)(\sqrt[p]{\alpha_1^\prime}, \dots, \sqrt[p]{\alpha_n^\prime})$. 

	Let $p^{e_i}$ be the exact power of $p$ dividing the order of the ideal class $[\alpha_i^\prime]_K$. Then $[\alpha_i^\prime]_K^{p^{e_i}}$ has order prime to $p$. Let $e =\max{e_i}$, and note that $[\alpha_i^\prime]_K^{p^e}$ has order prime to $p$. By using Proposition \ref{prop1} repeatedly we obtain an extension $F^\prime/F$ of degree $p^{e}$ such that if $K^\prime = K.F^\prime$, then 
	\begin{itemize}
		\item $K^\prime \cap L_p(K) = K$ and $F^\prime \cap L_p(K) = F$,
		\item $L_p(K^\prime) = L_p(K).K^\prime$ and $L_p(K^\prime) = L_p(K).F^\prime$,
		\item $\Gal(L_p(K^\prime)/F^\prime) \cong \Gamma$.
	\end{itemize}

	Denote by $j \colon Cl_K \to Cl_{K^\prime}$ the map induced by the inclusion $K \subset K^\prime$, and by $N \colon Cl_{K^\prime} \to Cl_K$ the norm map. The natural restriction $\Gal(L_p(K^\prime)/K^\prime) \to \Gal(L_p(K)/K)$, which is an isomorphism, induces an isomorphism $\Gal(L_p(K^\prime)/K^\prime)^{\text{ab}} \cong G^{\text{ab}}$. Then the order of $\ker N$ is prime to $p$ by class field theory. Thus, since $N \circ j ([\mathfrak{a}_i^\prime]_K) = [\mathfrak{a}_i^\prime]_K^{p^e}$, the order of $j([\mathfrak{a}_i^\prime]_K) \in Cl_{K^\prime}$, say $m_i$, is prime to $p$. Let $m = \text{lcm}(m_i)$, so $m$ is prime to $p$. Let $\mathfrak{a}_i^{\prime m} = a_i^\prime \mathcal{O}_{K^\prime}$, for some $a_i^\prime \in K^{\prime \times}$. Then $a_i^\prime \mathcal{O}_{K^\prime} = \alpha_i^{\prime m} \mathcal{O}_{L_p(K^\prime)}$. Thus, there exists $\varepsilon_i \in \mathcal{O}^\times_{L_p(K^\prime)}$ such that $\alpha_i^{\prime m} = a_i^\prime \varepsilon_i$. Note that $L_p(\sqrt[p]{\alpha_1^\prime}, \dots, \sqrt[p]{\alpha_n^\prime}) = L_p(\sqrt[p]{\alpha_1^{\prime m}}, \dots, \sqrt[p]{\alpha_n^{\prime m}})$, so we can replace $\alpha_i^\prime$ by $\alpha_i^{\prime m}$. 

	Recall that $\langle \alpha_i^\prime \mod{L_p(K^\prime)^{\times p}} \rangle \subset (L_p(K^\prime)^\times/L_p(K^\prime)^{\times p})^G$, as a subgroup. Since the extension $L_p(K^\prime)(\sqrt[p]{\alpha_1^\prime}, \dots, \sqrt[p]{\alpha_n^\prime})/F^\prime$ has Galois group $\Gamma^\prime$, it follows that $\langle \alpha_i^\prime \mod{L_p(K^\prime)^{\times p}} \rangle$ has an action of $\Gamma = G \rtimes \Phi$ and, as an $\F_p[\Gamma]$-representation, it is isomorphic to a copy of the projective cover of $V$; call it $\tilde{V}$. Hence, $\Gal(L_p(K^\prime)(\sqrt[p]{\alpha_1^\prime}, \dots, \sqrt[p]{\alpha_n^\prime})/F^\prime)$ is isomorphic to $\Gal(L_p(K^\prime)(\sqrt[p]{P_{\tilde{V}}(\alpha_1^\prime}), \dots, \sqrt[p]{P_{\tilde{V}}(\alpha_n^\prime}))/F^\prime)$, so we can replace $\alpha_i^\prime$ by $P_{\tilde{V}} (\alpha_i^\prime) = P_{\tilde{V}} (a_i \varepsilon_i) = P_{\tilde{V}}(a_i) \cdot P_{\tilde{V}}(\varepsilon_i)$. 
	Note that $P_{\tilde{V}}(a_i^\prime) = P_V (a_i^\prime)^{|G|}$, since $a_i^\prime \in K^{\prime \times}$. Since $\langle P_V(a_i^\prime) \rangle$ is isomorphic to a subrepresentation of $V$ in $K^{\prime \times}/K^{\prime \times p}$, and $V$ is an irreducible $\F_p[\Phi]$-representation, either $\langle P_V(a_i^\prime) \rangle \cong V$ or $\langle P_V(a_i^\prime) \rangle = 1$. The latter implies that $P_V(a_i^\prime) = 1$, for all $i$, which in turn implies that $P_{\tilde{V}} (\alpha_i^\prime) = P_{\tilde{V}} (\varepsilon_i)$, which finishes the proof: $L_p(K^\prime)(\sqrt[p]{P_V(\alpha_1^\prime)}, \dots, \sqrt[p]{P_V(\alpha_n^\prime)})/F^\prime$ is the desired extension.  On the other hand, the former implies that $\langle P_V(a_i^\prime) \rangle \cong V$, so $\Gal(K^\prime(\sqrt[p]{P_V(a_1^\prime)}, \dots \sqrt[p]{P_V(a_n^\prime)})/F^\prime) \cong V \rtimes \Phi$. Then, by Proposition \ref{embedding}, the Galois group of $L_p(K^\prime)(\sqrt[p]{P_V(a_1^\prime)}, \dots \sqrt[p]{P_V(a_n^\prime)})/F^\prime$ is isomorphic to $V \rtimes \Gamma$. Putting all this together, we can conclude that $\Gal(L_p(K^\prime)(\sqrt[p]{P_V(\varepsilon_1)}, \dots \sqrt[p]{P_V(\varepsilon_n)})/F^\prime) \cong \Gamma^\prime$. 
\end{proof}

Note that from construction, it follows that $\langle \varepsilon_1, \dots, \varepsilon_n \rangle \cong \langle \sigma (\varepsilon_1) \mid \sigma \in \Gamma \rangle$, as representations. Moreover, the new extension $L_p(K^\prime)(\sqrt[p]{\varepsilon_1}, \dots, \sqrt[p]{\varepsilon_n})/F^\prime$ satisfies property \textbf{P} except possibly at the primes above $p$. 

Just as before, construct primes $\lambda_i$, in $K^\prime$, for $1 \leq i \leq T$, and $\alpha, \beta$ in $K^\prime$ with the same properties. 
The primes $\alpha, \beta, \lambda_i$ are in $K^\prime$, so just as in the previous case, the Galois closure of $K_1 = K^\prime(\sqrt[p]{P_V(\nu_1\alpha)})$ over $F$ has Galois group $V \rtimes \Phi$. Define the field $L_1 = L_p(K^\prime)(\sqrt[p]{\varepsilon_1 P_V(\nu_1\alpha)}, \dots, \sqrt[p]{\varepsilon_n g_n(P_V(\nu_1\alpha))})$, and use Proposition \ref{embedding} and Lemma \ref{lemma4} to observe that the Galois group of $L_1/F^\prime$ is $\Gamma^\prime$. Construct $K_2$ and $L_2$ in a similar manner. Let $\tilde{K}$ be the Galois closure of $K^\prime(\sqrt[p]{P_V(\nu_1\alpha) P_V(\nu_2\beta)^{-1}})$ over $F^\prime$ and let $\tilde{L} = L_1.L_2$. Construct a field $\tilde{F}$ just as in the previous case. Then the new extension $\tilde{L}/\tilde{F}$ satisfies the conditions of Proposition \ref{prop2}. Note that property \textbf{P} is now satisfied, since there is no ramification at the prime $p$.

\section{Proof of Theorem \ref{thm2}} \label{proofofthm2}

Let $R$ be any local ring admitting a surjection to $\Z_p$ with finite (as a set) kernel $I_R$. Assume we have an absolutely irreducible residual representation 
\[
\overline{\psi} \colon G_F \to \GL_2(\F_p),
\]
whose image is $\tilde{\Phi}$, a group of order prime to $p$. Note that the projective image $\Phi$ of this representation is $A_4$, $S_4$, $S_5$ or a dihedral group. Since $\tilde{\Phi}$ has order prime to $p$, it lifts to a representation $\tilde{\Phi} \subset \GL_2(\Z_p)$. Let $\tilde{\Gamma}$ denote the inverse image of this group inside $\GL_2(R)$; it lives inside a split exact sequence
\[
1 \to 1 + M_2(I_R) \to \tilde{\Gamma} \to \tilde{\Phi} \to 1.
\]
The group $\tilde{\Gamma}$ admits a natural residual representation via $\overline{\psi}$, call it $\overline{\rho} \colon \tilde{\Gamma} \to \GL_2(\F_p)$, which is absolutely irreducible, so a universal deformation ring $R_{\overline{\rho}}$ exists. The aim of this section is to show that $R_{\overline{\rho}} \cong R$. 

Firstly, note that $\tilde{\Gamma}$ admits a deformation to $\GL_2(R)$ by construction, so $R_{\overline{\rho}} \twoheadrightarrow R$, so there exists an ideal $J \subset R_{\overline{\rho}}$ such that $R_{\overline{\rho}}/J \cong R$. The general case can be reduced to the case when $J$ is finite, so from now on, we will assume that $J$ is finite.
To prove that $R$ is universal, it is enough to prove that for every local ring $S$ with the following properties 
\begin{itemize}
	\item There exists a finite ideal $I_S$ such that $S/I_S \cong \Z_p$;
	\item $S \twoheadrightarrow R$;
	\item There is a lift $\tilde{\Gamma} \to \GL_2(S)$,
\end{itemize}
there exists a map $R \to S$ that makes the following diagram commute: 
\begin{center}
	\begin{tikzcd}
  \tilde{\Gamma} \arrow[r] \arrow[dr]
    & \GL_2(S) \arrow[d, two heads, shift left=1.5ex]\\
&\GL_2(R) \arrow[u]\end{tikzcd}
\end{center}

Before proving our main result, we need to introduce several lemmas. From now on, assume that $R$ and $S$ are local rings with the following properties:
\begin{enumerate}
	\item There exist finite ideals $I_S \subset S$ and $I_R \subset R$ such that $R/I_R \cong S/I_S \cong \Z_p$.
	\item $S \twoheadrightarrow R$.
\end{enumerate}

Since $S \twoheadrightarrow R$ and the ideals $I_S$ and $I_R$ are finite, it follows that there exists a finite ideal $J \subset S$ such that $S/J \cong R$.

We will use the following notation:
\begin{itemize}
	\item If $A$ is a local ring, let $\m_A$ denote the maximal ideal of $A$.
	\item Let $\Gamma_S = 1+ \M_2(I_S)$.
	\item For an ideal $I \subset S$, let $\Gamma_I = 1+ \M_2(I)$. 
\end{itemize}

\begin{lemma}\label{subclaim3}
	Let $S$ and $R$ be as above. Suppose that $\dim_{\F_p} J = 1$ and suppose that there exists some $0 \neq x \in J$ such that $x$ is sent to $0$ under the map $\m_S/(\m_S^2, p) \to \m_R/(\m_R^2, p)$. Then there exists a subring $S^\prime \subset S$ such that $S^\prime \cong R$. 
\end{lemma}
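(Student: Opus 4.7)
The plan is to unpack the hypothesis to its concrete content and then construct $S^\prime$ as the closed $\Z_p$-subalgebra of $S$ generated by appropriate lifts of a minimal generating set of $R$. The literal reading ``$x$ is sent to $0$ in $\m_R/(\m_R^2,p)$'' is vacuous for any $x\in J$, since $J\subset\m_S$ already maps to $0$ in $\m_R$. The only nonvacuous content is that the class $[x]\in\m_S/(\m_S^2,p)$ is itself nonzero, i.e.\ $x\notin\m_S^2+pS$. Put another way, $x$ represents a cotangent direction present in $S$ but not in $R$.

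With this reading, I would first invoke Cohen's structure theorem (applicable because $R/I_R\cong\Z_p$, making $R$ and $S$ complete local $\Z_p$-algebras with residue field $\F_p$) to pick elements $e_1,\dots,e_n\in\m_S$ whose images in $R$ form a basis of $\m_R/(\m_R^2,p)$. The hypothesis then forces $\{[x],[e_1],\dots,[e_n]\}$ to be a basis of $\m_S/(\m_S^2,p)$, so by the complete-local form of Nakayama's lemma the elements $x,e_1,\dots,e_n$ form a minimal system of topological $\Z_p$-algebra generators of $S$.

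Next, I would set $S^\prime\subset S$ to be the closed $\Z_p$-subalgebra generated by $e_1,\dots,e_n$ alone, and verify that the restriction of $S\twoheadrightarrow R$ to $S^\prime$ is an isomorphism. Surjectivity is immediate from Nakayama, since the images of the $e_i$ generate $\m_R/(\m_R^2,p)$ and hence topologically generate $R$ as a $\Z_p$-algebra. For injectivity, the kernel is $S^\prime\cap J$; since $J=\F_p\cdot x$ is one-dimensional and $S^\prime\supset\Z_p$, it is enough to prove $x\notin S^\prime$. If $x$ were a convergent power series in $e_1,\dots,e_n$ with $\Z_p$ coefficients, then reducing modulo $\m_S^2+pS$ would express $[x]$ as an $\F_p$-linear combination of $[e_1],\dots,[e_n]$, directly contradicting the basis property established above. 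Hence $S^\prime\cap J=0$ and $S^\prime\cong R$, as desired.

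The main obstacle I anticipate is pinning down the correct interpretation of the hypothesis, since the literal phrasing is automatic for any $x\in J$. Once the cotangent-space translation is in place, the rest is a standard Nakayama argument. A minor technical step is justifying that elements of $S^\prime$ can legitimately be written as convergent power series in the $e_i$, which requires $S$ to be $\m_S$-adically complete and $S^\prime$ to be taken as the topological closure rather than the naive polynomial subring.
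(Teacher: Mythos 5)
Your proof is correct and follows essentially the same approach as the paper: lift a generating set of $\m_R/(\m_R^2,p)$ to $S$, form the $\Z_p$-subalgebra $S'$ they generate, and use the cotangent-space relation to show $x\notin S'$, so that $S'\cap J=0$ while surjectivity onto $R$ holds by Nakayama. Your observation that the stated hypothesis is literally vacuous and should be read as $[x]\neq 0$ in $\m_S/(\m_S^2,p)$ (i.e.\ $x\notin(\m_S^2,p)$) is also the correct reading; that is exactly the condition the paper establishes before invoking this lemma in the proof of Lemma \ref{lemma}, and it is what the paper's own contradiction argument implicitly uses.
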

\begin{proof}
	Take generators $\overline{x_1}, \dots, \overline{x_d}$ of $\m_R/(\m_R^2, p)$ and lift these generators to $\m_S/(\m_S^2, p)$ and then to $S$. Denote the lifts to $S$ by $x_1, \dots, x_d$. Consider the subring $S^\prime$ of $S$ generated by these lifts. 

	Firstly, we claim that $x \not\in S^\prime$. Suppose otherwise, so $x = a_0 + a_1x_1 + \dots a_d x_d + \alpha$, where $\alpha \in \m_S^2$, $a_i \in \Z_p$. Since $x, x_i, \alpha \in \m_S$, it follows that $a_0 \in \m_S \cap \Z_p = (p)$. Recall that under the map
	\[
	\m_S/(\m_S^2, p) \to \m_R/(\m_R^2, p), 
	\]
	$x = a_0 + \sum a_i x_i + \alpha = \sum a_i x_i \text{ (in } \m_S/(\m_S^2, p))$ is sent to $0$. 
	Since $\overline{x_i}$ generate $\m_R/(\m_R^2, p)$, this is true if and only if $a_i = 0 \in \m_R/(\m_R^2, p)$, for all $i$, which is true if and only if $a_i \in \Z_p \cap (\m_R^2, p)= (p)$. Thus, $x \in (\m_S^2, p)$, which is a contradiction. So $x \not\in S^\prime$, which implies that $S^\prime \hookrightarrow R$.

	On the other hand, there is an isomorphism on cotangent spaces induced by this inclusion $S^\prime \hookrightarrow R$. In particular, the map on cotangent spaces is surjective, which implies that the original map must be surjective. Thus, the map from $S^\prime$ to $R$ must be an isomorphism. 
\end{proof}

\begin{lemma}\label{lemma}
	Let $S$ and $R$ be as above and suppose that $\dim_{\F_p} J = 1$. Then exactly one of the following holds:
	\begin{itemize}
		\item $[\Gamma_S, \Gamma_S]\Gamma_S^p$ contains $\Gamma_J$.
		\item $S \cong R[x]/(x^2, px)$, for some $x \in S$. 
	\end{itemize}
\end{lemma}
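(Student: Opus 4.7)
The plan is to compute the Frattini quotient $\Gamma_S/[\Gamma_S,\Gamma_S]\Gamma_S^p$ explicitly, translate the condition $\Gamma_J \subset [\Gamma_S,\Gamma_S]\Gamma_S^p$ into a condition on $x$ lying in $\m_S^2 + pS$, and then apply Lemma \ref{subclaim3} in the complementary case. First I would construct a group homomorphism
\[
\phi \colon \Gamma_S \to M_2\bigl(I_S/(I_S^2 + pI_S)\bigr), \qquad 1 + X \mapsto \overline{X}.
\]
This is well-defined as a homomorphism because $(1+X)(1+Y) = 1 + X + Y + XY$ with $XY \in M_2(I_S^2)$, and the target is annihilated by $p$, so $[\Gamma_S,\Gamma_S]\Gamma_S^p \subseteq \ker\phi$. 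For the reverse containment, which is the technical heart, I would show that every element of $1 + M_2(I_S^2 + pI_S)$ is a product of commutators and $p$-th powers by iterating along the filtration by $1 + M_2(I_S^k + pI_S^{k-1})$: elementary commutators $[1+aE_{ij}, 1+bE_{kl}]$ produce $1 + ab[E_{ij},E_{kl}]$ modulo higher order terms, while the identity $(1+aE_{ij})^p = 1 + paE_{ij}$ (exact when $i \neq j$, and modulo higher order when $i=j$) supplies the $pI_S$-direction. This identifies $\ker\phi$ with $[\Gamma_S,\Gamma_S]\Gamma_S^p$.

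Once this identification is in hand, $\Gamma_J \subseteq [\Gamma_S,\Gamma_S]\Gamma_S^p$ is equivalent to $J \subseteq I_S^2 + pI_S$. Since $S/I_S = \Z_p$ is torsion-free, $pS \cap I_S = pI_S$, and therefore for $x \in J \subseteq I_S$ this condition is equivalent to $x \in \m_S^2 + pS$, i.e.\ to the image of $x$ in $\m_S/(\m_S^2, p)$ being zero.

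The dichotomy now splits cleanly. If $x \in \m_S^2 + pS$, the first alternative holds. Otherwise $x$ has nonzero image in $\m_S/(\m_S^2, p)$, and Lemma \ref{subclaim3} produces a subring $S^\prime \subset S$ with $S^\prime \cong R$. Standard Nakayama arguments applied to the $\F_p$-line $J$ give $\m_S J = 0$, $J^2 = 0$, and $pJ = 0$, so $x^2 = 0$, $px = 0$, and $\m_S x = 0$. Combined with $S^\prime \cap J = 0$, this yields $S = S^\prime \oplus \F_p x$ additively, with multiplicative structure exactly that of $R[x]/(x^2, px)$. Mutual exclusivity is automatic: in case (b), $x \notin \m_S^2 + pS$ forces $\Gamma_J \not\subseteq [\Gamma_S,\Gamma_S]\Gamma_S^p$ by the previous paragraph.

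The main obstacle will be the proof that $\ker\phi = [\Gamma_S,\Gamma_S]\Gamma_S^p$. This requires careful bookkeeping along a two-variable filtration indexed by powers of $I_S$ and of $p$; in particular one must handle the diagonal entries, which cannot arise from matrix commutators alone (every matrix commutator is traceless) and must therefore be produced via $p$-th powers of matrices of the form $1 + aE_{ii}$, with error terms controlled modulo successively smaller pieces of the filtration.
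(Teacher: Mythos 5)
Your route is the same as the paper's: the paper's entire proof consists of the one-line assertion $[\Gamma_S,\Gamma_S]\Gamma_S^p = \Gamma_{(I_S^2,pI_S)}$, followed by exactly your remaining steps --- translate membership of $x$ in $(I_S^2,pI_S)$ into vanishing in $\m_S/(\m_S^2,p)$ (your observation $pS\cap I_S = pI_S$, from torsion-freeness of $S/I_S\cong\Z_p$, is the point the paper leaves implicit), invoke Lemma \ref{subclaim3} in the complementary case, and conclude using $\m_S J=0$ and the surjection $S^\prime[x]\to S$. Those parts of your proposal are correct and, if anything, more careful than the paper.

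The gap is precisely at the step you flag as the ``technical heart'': the inclusion $\Gamma_{(I_S^2,pI_S)}\subseteq[\Gamma_S,\Gamma_S]\Gamma_S^p$, which the paper asserts without proof and you propose to establish by a filtration argument. Matrix commutators are traceless to leading order, and your $p$-th powers $(1+aE_{ii})^p$ contribute only $paE_{ii}$ modulo higher terms, so along the diagonal you only reach $pI_S$ (plus traceless combinations) --- not all of $I_S^2+pI_S$. This cannot be repaired, because the inclusion is false in general. Take $p$ odd and $S=\Z_p[\epsilon]/(\epsilon^3,p\epsilon)$, so $I_S=(\epsilon)$, $pI_S=0$, $I_S^3=0$. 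Since $pI_S=0$, $p\mid\binom{p}{2}$ and $pI_S^2=0$, one checks $(1+X)^p=1$ for every $X\in M_2(I_S)$ and $(1+a)^p=1$ for every $a\in I_S$; consequently every element of $[\Gamma_S,\Gamma_S]\Gamma_S^p$ has determinant $1$, whereas $1+\epsilon^2E_{11}\in\Gamma_{(I_S^2,pI_S)}$ has determinant $1+\epsilon^2\neq 1$. So $\ker\phi$ strictly contains $[\Gamma_S,\Gamma_S]\Gamma_S^p$ here, and only the easy inclusion $[\Gamma_S,\Gamma_S]\Gamma_S^p\subseteq\ker\phi$ (which your map $\phi$ does give) holds in general; your two-variable filtration cannot close the other direction. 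In fact, taking $R=S/(\epsilon^2)$, so $J=\F_p\epsilon^2$ and $\dim_{\F_p}J=1$, neither alternative of the lemma holds: the first fails by the determinant computation, and the second fails since $\dim_{\F_p}S/pS=3$ while $\dim_{\F_p}\bigl(R[x]/(x^2,px)\bigr)/p=4$. So the defect is inherited from the paper's unproved identity rather than introduced by you, but as a blind proposal the key step, as described, would fail, and the case $J\subseteq(I_S^2,pI_S)$ needs a genuinely different treatment (or a stronger hypothesis on $S$) rather than better bookkeeping.
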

\begin{proof}
	Note that $[\Gamma_S, \Gamma_S]\Gamma_S^p = \Gamma_{(I_S^2, pI_S)}$, so if $J \subset (I_S^2, pI_S)$, then the first statement holds. Assume now that $J$ is not a subset of $(I_S^2, pI_S)$. So, there exists $x \in J$ such that $x \not\in (I_S^2, pI_S)$. Moreover, $x \not\in (\m_S^2, p)$. 
	Thus, under the map
	\[
	\m_S/(\m_S^2, p) \to \m_R/(\m_R^2, p),
	\]
	the nonzero element $x$ is sent to $0$. From Lemma \ref{subclaim3}, we know that there exists a subring $S^\prime \subset S$ such that $S^\prime \cong R$ and $x \not\in S^\prime$. Consider the map $S^\prime[x] \to S$: it is a surjection and $px, x^2$ are elements of the kernel. Thus $S \cong S^\prime[x]/(x^2, px) \cong R[x]/(x^2, px)$. 
\end{proof}

\begin{lemma} \label{subclaim4}
	Let $S$ and $R$ be as above. If $\Gamma_J \subset [\Gamma_S, \Gamma_S]\Gamma_S^p$, then there is no lift $\tilde{\Gamma} \to \GL_2(S)$. 
\end{lemma}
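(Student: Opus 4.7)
My plan is to argue by contradiction using the Burnside basis theorem applied to the finite $p$-group $\Gamma_S$. Suppose a lift $\rho\colon \tilde{\Gamma} \to \GL_2(S)$ exists, compatible with the canonical representation $\tilde{\Gamma} \hookrightarrow \GL_2(R)$; we may assume $J \neq 0$, since otherwise the lemma is vacuous. First I would choose $I_S$ to be the preimage of $I_R$ under $S \twoheadrightarrow R$---a legitimate choice since $S/I_S \cong R/I_R \cong \Z_p$ and $|I_S| = |J|\cdot|I_R|$ is finite. Write $\Gamma_R := 1+\M_2(I_R)$ for brevity. With this choice of $I_S$, the natural reduction $\GL_2(S) \to \GL_2(R)$ maps $\Gamma_S$ onto $\Gamma_R$ with kernel $\Gamma_J$.

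Restricting $\rho$ to the normal subgroup $\Gamma_R \subset \tilde{\Gamma}$ then yields a group-theoretic section $s\colon \Gamma_R \to \Gamma_S$ of this surjection, because composing with the reduction gives the identity on $\Gamma_R$. Since $I_S$ is a finite ideal of the local ring $S$ with residue field $\F_p$, the cardinality $|I_S|$ is a power of $p$, so $\Gamma_S$ is a finite $p$-group, and its Frattini subgroup is precisely $[\Gamma_S,\Gamma_S]\Gamma_S^p$. The hypothesis thus says that $\Gamma_J$ lies in the Frattini subgroup of $\Gamma_S$, so the surjection $\Gamma_S \twoheadrightarrow \Gamma_R$ is an isomorphism on Frattini quotients; by the Burnside basis theorem, $\Gamma_S$ and $\Gamma_R$ share the same minimal number of generators.

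The contradiction now follows quickly. Choosing generators $g_1,\dots,g_n$ of $\Gamma_R$, their lifts $s(g_1),\dots,s(g_n)$ reduce to a generating set of $\Gamma_S / [\Gamma_S, \Gamma_S]\Gamma_S^p$, and hence by Burnside they generate $\Gamma_S$ itself. Therefore $s$ is surjective as well as injective, so $\Gamma_S \cong \Gamma_R$, forcing $\Gamma_J = 1$ and $J = 0$, against our assumption. I expect the main subtle point to be verifying that the restriction of $\rho$ really lands in $\Gamma_S$ rather than in some larger subgroup of $\GL_2(S)$; the compatible choice of $I_S$ in the first step is what makes this automatic, after which everything is a clean application of the Burnside basis theorem and the finite $p$-group structure of $\Gamma_S$.
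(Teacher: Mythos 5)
Your proof is correct and follows essentially the same route as the paper's: both are Burnside basis theorem / Frattini quotient arguments for the finite $p$-group $\Gamma_S$, using the fact that a lift of $\tilde\Gamma$ restricts to a section $\Gamma_R \to \Gamma_S$ of the reduction, which must then surject onto the Frattini quotient and hence onto $\Gamma_S$ itself, contradicting $|\Gamma_R| < |\Gamma_S|$. The paper packages this as a commutative diagram and a factorization of $\psi\colon \Gamma_S \to F(\Gamma_S)$ through $\Gamma_R$, whereas you spell out the Burnside step and the verification that the restricted lift lands in $\Gamma_S$ (and flag the implicit assumption $J \neq 0$ that the paper also uses when contrasting cardinalities), but the content is the same.
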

\begin{proof}
	Suppose that $\Gamma_J \subset [\Gamma_S, \Gamma_S]\Gamma_S^p$ and suppose that there is a lift $\tilde{\Gamma} \to \GL_2(S)$. Then, we have the following commutative diagram:
	\begin{center}
		\begin{tikzcd}[column sep=tiny]
& \Gamma_S \ar[dr, "\psi", two heads] 

& \\
  \Gamma_R \ar[ur] \ar[dr, "\simeq"]
    &
      & F(\Gamma_S)\\
& \Gamma_R  \ar[ur, dashed, two heads]
&
&
\end{tikzcd}
	\end{center}
	where $F(\Gamma_S) = \Gamma_S/[\Gamma_S, \Gamma_S]\Gamma_S^p$ is the Frattini quotient. 

	Since $\Gamma_J \subset [\Gamma_S, \Gamma_S]\Gamma_S^p = \ker \psi$, it follows that $\psi$ has to factor through $\Gamma_R$, which implies that $\Gamma_R \cong \Gamma_R \to F(\Gamma_S)$ must be surjective, so $\Gamma_R \to \Gamma_S \to F(\Gamma_S)$ must be surjective. Since $F(\Gamma_S)$ is the Frattini quotiont of $\Gamma_S$, we obtain that $\Gamma_R \to \Gamma_S$ must be surjective, but this is impossible, since $\Gamma_R$ has fewer elements than $\Gamma_S$. Therefore, there is no lift to $\GL_2(S)$ in this case. 
\end{proof}

\begin{lemma}\label{subclaim6}
	Let $R$ and $S$ be as above. Assume, moreover, that $J$ is an $\F_p$-vector space of dimension $n$. Then exactly one of the following is true:
	\begin{enumerate}
		\item There is no lift $\tilde{\Gamma} \to \GL_2(S)$;
		\item $S \cong R[x_1, \dots, x_n]/(x_ix_j, px_i)$, for some $x_i \in S$.
	\end{enumerate}
\end{lemma}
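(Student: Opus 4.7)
The plan is to proceed by induction on $n = \dim_{\F_p} J$. The base case $n = 1$ is exactly the combination of Lemma~\ref{lemma} and Lemma~\ref{subclaim4}: either the first alternative in Lemma~\ref{lemma} holds and Lemma~\ref{subclaim4} gives case~(1), or $S \cong R[x]/(x^2, px)$, which is case~(2).

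For the inductive step, assume $n \geq 2$. The first move I would make is to pick a simple $S$-submodule $J_1 \subset J$, which exists because $J$ is a nonzero finitely generated module over the local Noetherian ring $S$ and hence has nonzero socle. Since the residue field of $S$ is $\F_p$, every simple $S$-submodule is one-dimensional over $\F_p$ and satisfies $\m_S \cdot J_1 = 0$. Now $S/J_1$ surjects onto $R$ with kernel $J/J_1$ of $\F_p$-dimension $n-1$. Any lift $\tilde\Gamma \to \GL_2(S)$ pushes forward through $\GL_2(S) \twoheadrightarrow \GL_2(S/J_1)$ to a lift $\tilde\Gamma \to \GL_2(S/J_1)$, so by the inductive hypothesis applied to $(S/J_1, R)$ either there is no such lift --- and hence no lift to $\GL_2(S)$ either, putting us in case~(1) --- or one obtains a presentation
\[
S/J_1 \;\cong\; R[y_1, \dots, y_{n-1}]/(y_i y_j,\, p y_i),
\]
where the $y_i$ are the images in $S/J_1$ of chosen lifts $\tilde y_i \in J$.

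Next I would apply Lemma~\ref{lemma} to the one-step extension $S \twoheadrightarrow S/J_1$ with kernel $J_1$. Either $\Gamma_{J_1} \subset [\Gamma_S, \Gamma_S]\Gamma_S^p$, and then Lemma~\ref{subclaim4} again gives case~(1), or $S \cong (S/J_1)[y_n]/(y_n^2, p y_n)$ with $y_n \in J_1$ a generator. Assembling the two presentations, $S$ is generated as an $R$-algebra by $\tilde y_1, \dots, \tilde y_{n-1}, y_n$, which form an $\F_p$-basis of $J$. The relations $p \tilde y_i = p y_n = 0$ follow from $pJ = 0$; the relation $y_n^2 = 0$ is immediate from Lemma~\ref{lemma}; and the cross relations $\tilde y_i y_n = 0$ for $i < n$ hold because $\tilde y_i \in \m_S$ while $\m_S \cdot J_1 = 0$. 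A size comparison with $R[t_1, \dots, t_n]/(t_i t_j, p t_i)$ then produces the claimed isomorphism, i.e.\ case~(2).

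The main obstacle, and the step I expect to be delicate, is showing the residual products $\tilde y_i \tilde y_j$ for $i, j < n$ vanish in $S$. These a~priori lie in $J_1 = \F_p y_n$. If any is nonzero then $J^2 = J_1$, so $y_n \in I_S^2 \subset (I_S^2, p I_S)$ and therefore $\Gamma_{J_1} \subset [\Gamma_S, \Gamma_S]\Gamma_S^p$. One then wants to feed this back into Lemma~\ref{subclaim4} to obstruct the existence of a lift of the original $\tilde\Gamma$ to $\GL_2(S)$, closing the dichotomy; making this traceback rigorous --- in particular relating the no-lift conclusion at the intermediate stage $S \twoheadrightarrow S/J_1$ to the absence of a lift of $\tilde\Gamma = \tilde\Gamma(R)$ itself --- is the technical heart of the argument.
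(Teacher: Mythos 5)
Your overall strategy is the same as the paper's: induction on $\dim_{\F_p}J$, base case obtained by combining Lemmas \ref{lemma} and \ref{subclaim4}, and the inductive step obtained by interposing the quotient of $S$ by a one-dimensional piece of $J$ (the paper takes $S_1=S/(x_1)$, applies the one-dimensional case to the pair $(S,S_1)$ and the inductive hypothesis to $(S_1,R)$; you run the same two steps in the opposite order, which changes nothing). The point you call the ``main obstacle'', however, is not where the content lies. The hypothesis must be read as saying that $J$ is an $\F_p$-vector space \emph{as an $S$-module}, i.e.\ $\m_S J=0$: this is exactly the situation in which the lemma is invoked afterwards (there one first replaces $S$ by $S/\m_S J$), and it is forced by conclusion (2), in which the kernel is square-zero. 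With $\m_S J=0$ your residual products satisfy $\tilde y_i\tilde y_j\in\m_S J=0$ by the same one-line argument you used for $\tilde y_iy_n$, and the assembly closes with no further work. Under the weaker reading ($pJ=0$ only) the statement is simply false --- e.g.\ $S=\Z_p[x]/(x^3,px)\twoheadrightarrow R=\Z_p$ has two-dimensional kernel, is not of the form (2) since its kernel is not square-zero, and yet $\tilde\Gamma=\tilde\Phi$ lifts to $\GL_2(S)$ because a prime-to-$p$ group always lifts through the finite $p$-group $\ker(\GL_2(S)\to\GL_2(\Z_p))$ --- so your fallback (deducing ``no lift'' from $J^2\neq 0$) cannot be made to work in that generality; the fix is the stronger hypothesis, not a cleverer traceback.

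Where you do need a traceback is the other branch of your dichotomy: when $\Gamma_{J_1}\subset[\Gamma_S,\Gamma_S]\Gamma_S^p$, Lemma \ref{subclaim4} applied verbatim to the pair $(S,S/J_1)$ obstructs a section $\Gamma_{S/J_1}\to\Gamma_S$, whereas a lift of the original $\tilde\Gamma$ only provides a section $\Gamma_R\to\Gamma_S$; you correctly isolate this, and it is exactly the step the paper also passes over (it applies the base case to $S$, $S_1$, $(x_1)$ and simply asserts there is no lift of $\tilde\Gamma$ to $\GL_2(S)$). The gap is real but has a short repair once $\m_S J=0$ is in force: choose an $\F_p$-complement $J'$ of $J_1$ inside $J$ (an ideal of $S$, since $\m_S J=0$) and pass to $S/J'$, which surjects onto $R$ with one-dimensional kernel $J_1$; since $(I_S^2,pI_S)$ surjects onto $(I_{S/J'}^2,pI_{S/J'})$ and $J_1\cap J'=0$, the inclusion $J_1\subset(I_S^2,pI_S)$ persists in $S/J'$, so Lemma \ref{subclaim4} applied to the pair $(S/J',R)$ --- where a lift of $\tilde\Gamma$ to $\GL_2(S)$ pushes forward to a genuine lift to $\GL_2(S/J')$ --- gives the contradiction, hence case (1). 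With that patch, and with the cross-term worry removed as above, your argument is a complete version of the paper's proof rather than a different one.
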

\begin{proof}
	This will be proved by induction on $n = \dim_{\F_p}J$. 

	Base case: $n = 1$. By Lemma \ref{lemma}, we know that exactly one of the following holds: 
	\begin{enumerate}
		\item $\Gamma_J \subset [\Gamma_S, \Gamma_S]\Gamma_S^p$.
		\item $S \cong R[x]/(x^2, px)$, for some $x \in S$. 
	\end{enumerate}
	If $(1)$ is true, then by Lemma \ref{subclaim4}, there is no lift $\tilde{\Gamma} \to \GL_2(S)$. If $(2)$ is true, there is nothing to prove.  This concludes the base case. 

	Inductive step: suppose the result is true for $m < n$ and consider the case $n = \dim_{\F_p}J$. Suppose $J = (x_1, \dots, x_n)$, for some $x_i \in S$. Let $S_1 = S/(x_1)$. Apply the base case to $S$, $S_1$ and $(x_1)$ instead of $S$, $R$ and $J$ to obtain that either there is no lift $\tilde{\Gamma} \to \GL_2(S)$ or $S \cong S_1[x_1]/(x_1^2, px_1)$. If the former is true, then the proof is complete. Assume that $S \cong S_1[x_1]/(x_1^2, px_1)$. By construction, $S_1$ still surjects onto $R$ with kernel $J_1 = J/(x_1)$. Note that $\dim_{\F_p} J_1 < n = \dim_{\F_p} J$. By induction, we know that $S_1 \cong R[x_2, \dots x_n]/(x_i x_j, px_i)$. Putting these two things together, we obtain that $S \cong R[x_1, \dots x_n]/(x_ix_j, px_i)$. 
\end{proof}

\begin{proposition}
	Let $R$ and $S$ be two local rings as above. Then there exists a splitting $R \to S$. 
\end{proposition}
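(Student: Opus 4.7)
The plan is to reduce the general case by induction on $|J|$, where $J = \ker(S \twoheadrightarrow R)$, to the one-dimensional-kernel situation already handled by Lemmas \ref{lemma} and \ref{subclaim4}. For the base case $J = 0$ we have $S = R$, and the identity serves as the splitting.

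For the inductive step, I would first pick a minimal nonzero ideal $J^\prime \subset J$. Such a $J^\prime$ exists because $J$ is a finite nonzero $S$-module, and since $S$ is local with residue field $\F_p$, any simple $S$-submodule is annihilated by $\m_S$; hence $J^\prime$ is an $\F_p$-vector space of dimension one. Setting $S^\prime = S/J^\prime$, the ring $S^\prime$ still satisfies our standing hypotheses: it surjects onto $\Z_p$ with finite kernel $I_S/J^\prime$, it surjects onto $R$ with kernel $J/J^\prime$ of strictly smaller order than $J$, and it admits a lift $\tilde{\Gamma} \to \GL_2(S^\prime)$ obtained by composing the given lift $\tilde{\Gamma} \to \GL_2(S)$ with the reduction $\GL_2(S) \to \GL_2(S^\prime)$.

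Applying Lemma \ref{lemma} to the surjection $S \twoheadrightarrow S^\prime$ with one-dimensional kernel $J^\prime$ yields two exclusive possibilities: either $\Gamma_{J^\prime} \subset [\Gamma_S, \Gamma_S]\Gamma_S^p$, or $S \cong S^\prime[x]/(x^2, px)$ for some $x \in S$. The first possibility is ruled out by Lemma \ref{subclaim4}, since it would contradict the existence of the lift $\tilde{\Gamma} \to \GL_2(S)$. Therefore the second possibility must hold, and the resulting presentation provides a ring inclusion $\iota \colon S^\prime \hookrightarrow S$ splitting the quotient $S \twoheadrightarrow S^\prime$. By the inductive hypothesis applied to $S^\prime \twoheadrightarrow R$, there exists a splitting $s \colon R \to S^\prime$, and the composition $\iota \circ s \colon R \to S$ is the desired splitting of $S \twoheadrightarrow R$.

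The key verifications are that the chosen $J^\prime$ genuinely has $\F_p$-dimension one (a socle argument in the finite $S$-module $J$) and that $S^\prime$ meets the hypotheses required to invoke the earlier lemmas with $(S, S^\prime, J^\prime)$ in place of $(S, R, J)$. The main conceptual point is recognizing that Lemmas \ref{lemma} and \ref{subclaim4}, though stated for the surjection $S \twoheadrightarrow R$, apply equally to any intermediate one-dimensional-kernel surjection; once this is noted, the argument is formal.
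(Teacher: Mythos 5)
Your inductive strategy is different from the paper's: you peel off a \emph{minimal} nonzero ideal $J^\prime \subset J$ and work with $S \twoheadrightarrow S^\prime = S/J^\prime$, whereas the paper peels off the \emph{top} layer, forms $S^\prime = S/\m_S J$ with $J^\prime = J/\m_S J$, applies Lemma \ref{subclaim6} to the pair $(S^\prime, R)$, and then passes to $S^{\prime\prime} = \pi^{-1}(R) \subset S$, which has strictly smaller length. This asymmetry matters, and it is where your argument breaks.

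The precise gap is in your application of Lemma \ref{subclaim4} to the pair $(S, S^\prime, J^\prime)$. Lemma \ref{lemma} is genuinely a ring-theoretic dichotomy and does apply to any intermediate one-dimensional-kernel quotient, as you say. But Lemma \ref{subclaim4} is not: its proof makes essential use of a section $\Gamma_R \to \Gamma_S$ of the quotient $\Gamma_S \twoheadrightarrow \Gamma_R$, obtained by restricting the assumed lift $\tilde{\Gamma} \to \GL_2(S)$ to the subgroup $\Gamma_R \subset \tilde{\Gamma}$. This section is what produces the surjection $\Gamma_R \to F(\Gamma_S)$ and hence the contradiction with the Frattini quotient. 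When you replace $R$ by an intermediate $S^\prime = S/J^\prime$, the analogous argument would require a section $\Gamma_{S^\prime} \to \Gamma_S$ of $\Gamma_S \twoheadrightarrow \Gamma_{S^\prime}$; but $\Gamma_{S^\prime}$ is not a subgroup of $\tilde{\Gamma}$ (which is defined as the preimage of $\tilde{\Phi}$ in $\GL_2(R)$, not in $\GL_2(S^\prime)$), and the given lift $\tilde{\Gamma} \to \GL_2(S)$ only furnishes a section of $\Gamma_S \twoheadrightarrow \Gamma_R$, not of $\Gamma_S \twoheadrightarrow \Gamma_{S^\prime}$. Note also that the hypothesis $\Gamma_{J^\prime} \subset [\Gamma_S,\Gamma_S]\Gamma_S^p$ is \emph{weaker} than $\Gamma_J \subset [\Gamma_S,\Gamma_S]\Gamma_S^p$, so you cannot fall back on the lemma as literally stated either. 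The paper sidesteps all this by placing the one-dimensional (or finite $\F_p$-vector-space) kernel at the \emph{bottom} of the tower, i.e.\ between $S^\prime = S/\m_S J$ and $R$: there the required lift $\tilde{\Gamma} \to \GL_2(S^\prime)$ is obtained for free by composing with $S \twoheadrightarrow S^\prime$, and the section $\Gamma_R \to \Gamma_{S^\prime}$ comes from restricting it. To repair your argument you would need to supply, independently, the section $\Gamma_{S^\prime} \to \Gamma_S$ or otherwise re-prove the obstruction to a lift in the intermediate situation; as written, the claim that Lemma \ref{subclaim4} ``applies equally to any intermediate one-dimensional-kernel surjection'' is not justified.
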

\begin{proof}
	Let $I_R$ be the kernel of $R \twoheadrightarrow \Z_p$, $I_S$ be the kernel of $S \twoheadrightarrow \Z_p$, and $J$ be the kernel of $S \twoheadrightarrow R$. 
	We will prove this result by induction on $\ell(S)$, where we define $\ell(S) = \ell(I_S)$, the length of the ideal $I_S$. 

	Base case: $\ell(S) = \ell(R) + 1$. Replacing $S$ by $S/\m_S J$, if necessary, we can assume that $J$ is an $\F_p$-vector space. We can do this, because to find a lift to $S$, it is enough to find a lift to $S/\m_S J$. Then the condition $\ell(S) = \ell(R) + 1$ translates to $\dim_{\F_p} J = 1$, and this follows from the base case in Lemma \ref{subclaim6}.

	Inductive step: suppose true for $\ell(S) < N$; we would like to prove that for $\ell(S) = N$ there is a splitting $R \to S$. If $S \twoheadrightarrow R$ has kernel $J$, then
	\begin{align*}
		S \xrightarrow{\pi} S/\m_SJ \twoheadrightarrow S/J = R. 
	\end{align*}
	Let $S^\prime = S/\m_SJ$. Then $S^\prime$ is a local $\Z_p$-algebra with maximal ideal $\m_{S^\prime} = \m_S/\m_SJ$.  Moreover, there is a surjection from $S^\prime$ to $R$ with kernel $J^\prime = J/\m_SJ$. This new local $\Z_p$-algebra has the following properties:
	\begin{itemize}
		\item $S^\prime/J^\prime = R$;
		\item $\m_{S^\prime}J = 0$;
		\item $J^\prime$ is a $S/\m_S = \F_p$-vector space.
	\end{itemize}
	Thus, by Lemma \ref{subclaim6}, it follows that exactly one of the following is true
	\begin{enumerate}
		\item There is no lift $\tilde{\Gamma} \to \GL_2(S^\prime)$;
		\item $S^\prime \cong R[x_1, \dots, x_n]/(x_ix_j, px_i)$, for some $x_i \in S^\prime$.
	\end{enumerate}
	If there is no lift to $\GL_2(S^\prime)$, then there's no lift to $\GL_2(S)$, so there's nothing to be proved. The second case gives us a splitting $R \hookrightarrow S^\prime$. We are in the following situation: 
	\begin{center}
		\begin{tikzcd}
  S \arrow[r, two heads, "\pi"]
    & S^\prime \arrow[r, two heads] 
    & R \\
&R \arrow[u, hook]\end{tikzcd}
	\end{center}
	Let $S^{\prime\prime} = \pi^{-1}(R) \subset S$. Then $\ell(S^{\prime\prime}) < \ell(S)$, so by induction there is a lift $R \to S^{\prime\prime}$. Then we can conclude that there is a lift
	\begin{align*}
		R \to S^{\prime\prime} \to S,
	\end{align*}
	where the first map comes from the induction step and the second map comes from the definition of $S^{\prime \prime}$. 
\end{proof}

We can now prove Theorem \ref{thm2}. Recall that at the beginning of this section we assumed the existence of an absolutely irreducible residual representation $\overline{\psi} \colon G_K \to GL_2(\overline{\F_p})$, whose image is $\tilde{\Phi}$. From this, we obtained a short exact sequence 
\[1 \to G \to \tilde{\Gamma} \to \tilde{\Phi} \to 1,\] and a residual representation $\overline{\rho} \colon \tilde{\Gamma} \to \GL_2(\overline{\F_p})$. We have already proved that $R$ is the universal deformation ring of $\overline{\rho}$. To prove that $R$ is a universal everywhere unramified deformation ring of some residual representation, we need to find extensions $H/K/F$ with $\Gal(H/F) = \tilde{\Gamma}$ and $H/K$ the maximal everywhere unramified pro-$p$ extension of $K$. The existence of such extensions is given by Theorem \ref{thm1}, under the assumption that there exists a $\tilde{\Phi}$-extension of $\Q(\zeta_p)$ with class number prime to $p$ that satisfies property $\textbf{P}$. We claim that we can reduce the problem to $\Phi$, the projective image of $\tilde{\Phi}$: consider the following exact sequence \[1 \to G \to \tilde{\Gamma} \to \tilde{\Phi} \to 1\] and its projective image \[1 \to G \to \Gamma \to \Phi \to 1.\] We are in the following case
\[
    \begin{tikzcd}
    	\tilde{\Gamma} \arrow[rr]  \arrow[d] & &\tilde{\Phi} \arrow[d] \\
    	\Gamma \arrow[rr] & & \Phi \\
    	& G_F \arrow[uur, bend right=80] \arrow[ur] \arrow[ul] \arrow[uul, dotted, bend left = 80, "?"]
	\end{tikzcd}
\]
To prove that there exists a lift $G_F \to \tilde{\Gamma}$, take two compatible set theoretic lifts. The centres $Z(\tilde{\Gamma}) = Z(\tilde{\Phi})$ are equal, so the 2-cocyles will be the same. Since any set theoretic lift to $\tilde{\Phi}$ is a homomorphism, it follows that the lift to $\tilde{\Gamma}$ must be a homomorphism, and we are done.

To finish the proof of Theorem \ref{thm2}, we constructed extensions with the desired properties for $p=5$ and $p=7$ (see the two examples below) using GP/Pari (\cite{GPpari}) and the Database of Number Fields \url{https://hobbes.la.asu.edu/NFDB/} (\cite{MR3356048}). Note that this proof works for any regular prime $p \geq 5$ under the extra assumption that there exists a $\Phi$-extension of $\Q(\zeta_p)$ with class number prime to $p$ that satisfies property \textbf{P}. 

\begin{ex}
	Let $p=5$ and $\tilde{\Phi} = \tilde{A_4} = \SL_2(\F_3)$ (so $\Phi = A_4$). Let $E = \Q(\zeta_5)$ and let $\tilde{L_1}$ be the Galois closure of the field defined by the following polynomial over $\Q$:
	\begin{align*}
		x^8 - 5x^6 - 3x^5 + 28x^4 - 12x^3 - 80x^2 + 256.
	\end{align*}
	This field has an intermediate field $L_1$, which is the Galois closure of the field defined by
	\begin{align*}
		x^4 - 21x^2 - 3x + 100. 
	\end{align*}
	Let $\tilde{L} = E.\tilde{L_1}$ and $L = E.L_1$. Then $L$ is a subfield of $\tilde{L}$ and $\Gal(\tilde{L}/E) = \SL_2(\F_3) \twoheadrightarrow A_4 = \Gal(L/E)$. We used GP/Pari \cite{GPpari} to show that the class numbers of both $L$ and $\tilde{L}$ are prime to $5$ and that both $L/E$ and $\tilde{L}/E$ satisfy property \textbf{P}. This concludes the proof for $p = 5$. 
\end{ex}

\begin{ex}
	Let $p=7$ and $\tilde{\Phi} = \tilde{A_4} = \SL_2(\F_3)$ (so $\Phi = A_4$). Let $E = \Q(\zeta_7)$ and let $\tilde{L_1}$ be the Galois closure of the field defined by the following polynomial over $\Q$:
	\begin{align*}
		x^8 - x^7 - 11x^6 + 13x^5 + 32x^4 - 41x^3 - 23x^2 + 32x - 1.
	\end{align*}
	This field has an intermediate field $L_1$, which is the Galois closure of the field defined by
	\begin{align*}
		x^4 - x^3 - 11x^2 + 4x + 12.
	\end{align*}
	Let $\tilde{L} = E.\tilde{L_1}$ and $L = E.L_1$. Then $L$ is a subfield of $\tilde{L}$ and $\Gal(\tilde{L}/E) = \SL_2(\F_3) \twoheadrightarrow A_4 = \Gal(L/E)$. We used GP/Pari \cite{GPpari} to show that the class numbers of both $L$ and $\tilde{L}$ are prime to $7$ and that both $L/E$ and $\tilde{L}/E$ satisfy property \textbf{P}. This concludes the proof for $p = 7$. 
\end{ex}

\section*{Acknowledgements}
I would like to thank my PhD advisor Frank Calegari for suggesting this problem, and for the constant support and helpful discussions on this topic. I also want to thank Professor Ravi Ramakrishna for taking the time to read the first draft of this paper and for all his valuable suggestions. I would also like to thank Sam Quinn for the useful comments and discussions throughout the project. Finally, I would like to thank Gal Porat for the help provided when I first started working on the proof of Proposition \ref{prop2}.

\bibliographystyle{siam}
\bibliography{paper}

\end{document}